\title{Complexity Framework for Forbidden Subgraphs IV: The Steiner Forest Problem}
\author{Hans L. Bodlaender}{Utrecht University, Utrecht, The Netherlands}{h.l.bodlaender@uu.nl}{}{}
\author{Matthew Johnson}{Durham University, Durham, United Kingdom}{matthew.johnson2@durham.ac.uk}{}{}
\author{Barnaby Martin}{Durham University, Durham, United Kingdom}{barnaby.d.martin@durham.ac.uk}{}{}
\author{Jelle J. Oostveen}{Utrecht University, Utrecht, The Netherlands}{j.j.oostveen@uu.nl}{0009-0009-4419-3143}{}
\author{Sukanya Pandey}{Utrecht University, Utrecht, The Netherlands}{s.pandey1@uu.nl}{0000-0001-5728-1120}{}
\author{Dani\"el Paulusma}{Durham University, Durham, United Kingdom}{daniel.paulusma@durham.ac.uk}{0000-0001-5945-9287}{}
\author{Siani Smith}{University of Bristol and Heilbronn Institute for Mathematical Research, Bristol, United Kingdom}{siani.smith@bristol.ac.uk}{}{}
\author{Erik Jan van Leeuwen}{Utrecht University, Utrecht, The Netherlands}{e.j.vanleeuwen@uu.nl}{0000-0001-5240-7257}{}
\authorrunning{H.L. Bodlaender et al.}
\keywords{Steiner forest, forbidden subgraph; complexity dichotomy; vertex cover number; deletion set}
\newtheorem{open}{Open Problem}
\newcounter{ctrclaim}[theorem]
\newcounter{ctrcase}[theorem]
\DeclareMathOperator{\vc}{vc}
\newcommand{\undC}{C}
\newcommand{\NP}{{\sf NP}}
\newcommand{\ssi}{\subseteq_i}
\newcommand{\sft}{\mathsf{sf}}
\newcommand{\arch}{archipelago\xspace}
\newcommand{\archs}{archipelagos\xspace}
\newcommand{\refines}{\sqsubseteq}
\newcommand{\FPT}{{\sf FPT}}
\newcommand{\problemdef}[3]{
        \begin{center}
                \begin{boxedminipage}{.99\textwidth}
                        \textsc{{#1}}\\[2pt]
                        \begin{tabular}{ r p{0.8\textwidth}}
                                \textit{~~~~Instance:} & {#2}\\
                                \textit{Question:} & {#3}
                        \end{tabular}
                \end{boxedminipage}
        \end{center}
}
\begin{document}
\maketitle

\begin{abstract}
We study {\sc Steiner Forest} on $H$-subgraph-free graphs, that is, graphs that do not contain some fixed graph~$H$ as a (not necessarily induced) subgraph. We are motivated by a recent framework that completely characterizes the complexity of many problems on $H$-subgraph-free graphs. However, in contrast to e.g.\ the related {\sc Steiner Tree} problem, {\sc Steiner Forest} falls outside this framework. Hence, the complexity of {\sc Steiner Forest} on $H$-subgraph-free graphs remained tantalizingly open.

In this paper, we make significant progress towards determining the complexity of {\sc Steiner Forest} on $H$-subgraph-free graphs. Our main results are four novel polynomial-time algorithms for different excluded graphs $H$ that are central to further understand its complexity.  Along the way, we study the complexity of {\sc Steiner Forest} for graphs with a small $c$-deletion set, that is, a small set $S$ of vertices such that each component of $G-S$ has size at most~$c$. Using this parameter, we give two noteworthy algorithms that we later employ as subroutines. First, we prove that {\sc Steiner Forest} is 
fixed-parameter tractable
by $|S|$ when $c=1$ (i.e.\ the vertex cover number). Second, we prove that {\sc Steiner Forest} is polynomial-time solvable for graphs with a $2$-deletion set of size at most~$2$. 
The latter result is tight, as the problem is \NP-complete for graphs with a $3$-deletion set of size~$2$. 
\end{abstract}

\section{Introduction}\label{s-intro}

In this paper, we consider the complexity of a classical graph problem, {\sc Steiner Forest}, restricted to graphs that do not contain some fixed graph~$H$ as a subgraph. Such graphs are said to be {\it $H$-subgraph-free}, that is, they cannot be modified to $H$ by a sequence of edge deletions and vertex deletions. A graph $G$ is {\it $H$-free} if $G$ cannot be modified into $H$ by a sequence of vertex deletions only. Even though $H$-free graphs
are more widely studied in the literature, $H$-subgraph-free graphs are also highly interesting, as was recently shown through the introduction of a large, general framework for the subgraph relation~\cite{JMOPPSV,JMPPSvL22,JMPPSV,MPPSV}.

Before giving our results on {\sc Steiner Forest}, we first explain the framework.
For a set of~graphs ${\cal H}$, a graph $G$ is  {\it ${\cal H}$-subgraph-free}  if $G$ is $H$-subgraph-free for every $H\in {\cal H}$.
In order to unify and extend known classifications for {\sc Independent Set}~\cite{AK92}, {\sc Dominating Set}~\cite{AK92}, {\sc List Colouring}~\cite{GP14}, {\sc Long Path}~\cite{AK92} and {\sc Max-Cut}~\cite{Ka12} on ${\cal H}$-subgraph-free graphs (for finite ${\cal H}$), a systematic approach was followed in~\cite{JMOPPSV}.
A class of graphs has bounded {\it treewidth} if there exists a constant~$c$ such that every graph in it has treewidth at most~$c$.  
For an integer $k\geq 1$, the {\it $k$-subdivision} of an edge $e=uv$ of a graph replaces $e$ by a path of length $k+1$ with endpoints $u$ and $v$ (and $k$ new vertices). 
The {\it $k$-subdivision} of a graph~$G$ is the graph obtained from $G$ after $k$-subdividing each edge. 
For a graph class ${\cal G}$ and an integer~$k$, let ${\cal G}^k$ consist of the $k$-subdivisions of the graphs in ${\cal G}$.
A graph problem $\Pi$ is \NP-complete {\it under edge subdivision of subcubic graphs} if for every integer $j \geq 1$, there is an integer~$\ell \geq j$ such that:
if $\Pi$ is computationally hard for the class ${\cal G}$ of subcubic graphs (graphs with maximum degree at most~$3$), then $\Pi$ is computationally hard for ${\cal G}^{\ell}$.
Now, $\Pi$ is a {\it C123-problem} if:\\[-8pt]
\begin{enumerate}
\item [{\bf C1.}] $\Pi$ is polynomial-time solvable for every graph class of bounded treewidth,
\item [{\bf C2.}] $\Pi$ is \NP-complete for the class of subcubic graphs, and
\item [{\bf C3.}] $\Pi$ is \NP-complete under edge subdivision of subcubic graphs.\\[-8pt]
\end{enumerate}
\noindent
A {\it subdivided} claw is a graph obtained from a {\it claw} ($4$-vertex star) by subdividing each of its three edges zero or more times. The {\it disjoint union} of two vertex-disjoint graphs $G_1$ and $G_2$ is graph $(V(G_1)\cup V(G_2), E(G_1)\cup E(G_2))$. The set ${\cal S}$ consists of all graphs that are disjoint unions of subdivided claws and paths. 
We can now state the complexity classification of~\cite{JMOPPSV}.

\begin{theorem}[\cite{JMOPPSV}]\label{t-dicho2}
Let $\Pi$ be a C123-problem. For a finite set ${\cal H}$, the problem $\Pi$ on ${\cal H}$-subgraph-free graphs is polynomial-time solvable if ${\cal H}$ contains a graph from ${\cal S}$ (or equivalently, if the class of ${\cal H}$-subgraph-free graphs has bounded treewidth) and \NP-complete otherwise.
\end{theorem}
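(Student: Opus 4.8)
The plan is to first establish the structural claim hidden in the parentheses --- that, for a finite set ${\cal H}$, the class of ${\cal H}$-subgraph-free graphs has bounded treewidth if and only if ${\cal H}$ contains a graph from ${\cal S}$ --- and then read off the complexity dichotomy from C1--C3. Recall that every graph in ${\cal S}$ is a forest of maximum degree at most~$3$, and that the ${\cal H}$-subgraph-free graphs are precisely the intersection, over $H\in{\cal H}$, of the $H$-subgraph-free graphs. So the structural claim reduces to: (a) if $H\in{\cal S}$, then every $H$-subgraph-free graph has treewidth bounded by a function of $|V(H)|$; and (b) if ${\cal H}\cap{\cal S}=\emptyset$, then some class of graphs of unbounded treewidth is ${\cal H}$-subgraph-free.

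For (a), I would invoke the Excluded Grid Theorem of Robertson and Seymour: there is a function $f$ such that every graph $G$ with $\tw(G)\geq f(t)$ contains the $t\times t$ grid as a minor. Set $t\df|V(H)|$ and take $\tw(G)\geq f(t)$. The $t\times t$ grid contains $H$ as a subgraph (route the paths of $H$ and the three arms of each of its subdivided claws inside the grid), hence the grid, and therefore $G$, has $H$ as a minor. As $H$ has maximum degree at most~$3$, being a minor coincides with being a topological minor, so $G$ contains some subdivision $H'$ of $H$ as a subgraph. Finally, since $H\in{\cal S}$, the subdivision $H'$ itself contains $H$ as a subgraph --- shorten each path of $H'$ and truncate each arm of each subdivided claw of $H'$ to the length prescribed by $H$. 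Hence $G$ contains $H$ as a subgraph, so every $H$-subgraph-free graph has treewidth at most $f(|V(H)|)-1$.

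For (b), let $W_k$ be the $k$-wall, a subcubic graph of treewidth $\Theta(k)$, and let $W_k^{(r)}$ be obtained from $W_k$ by subdividing every edge exactly $r$ times, where $r\df\max_{H\in{\cal H}}|V(H)|$ is a constant (finite, as ${\cal H}$ is finite) while $k$ varies. Each $W_k^{(r)}$ is subcubic, has girth more than $r$ (each edge becomes a path of length $r+1$), has its degree-$3$ vertices --- exactly the original wall vertices --- pairwise at distance at least $r+1$, and has treewidth at least $\tw(W_k)=\Theta(k)$, because contracting the subdivision paths recovers $W_k$ as a minor; thus the family $\{W_k^{(r)}: k\geq 1\}$ has unbounded treewidth. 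Now fix $H\in{\cal H}$. Since $H\notin{\cal S}$, some component $C$ of $H$ is neither a path nor a subdivided claw, so $C$ satisfies at least one of: (i) $C$ has a vertex of degree at least~$4$; (ii) $C$ contains a cycle; (iii) $C$ is a tree of maximum degree~$3$ with at least two vertices of degree~$3$. In any subgraph copy of $C$ inside $W_k^{(r)}$, a vertex of degree $d$ in $C$ maps to a vertex of degree at least $d$, and distances do not increase. Hence (i) is impossible ($W_k^{(r)}$ is subcubic); (ii) is impossible (a shortest cycle of $C$ has length at most $|V(H)|\leq r$, below the girth of $W_k^{(r)}$); and (iii) is impossible (the two degree-$3$ vertices of $C$ are within distance $|V(H)|\leq r$ in $C$, so their images would be two degree-$\geq 3$ vertices of $W_k^{(r)}$ within distance $r$, whereas distinct degree-$3$ vertices of $W_k^{(r)}$ lie at distance at least $r+1$). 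So $W_k^{(r)}$ is ${\cal H}$-subgraph-free, proving (b) and the structural claim.

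The dichotomy follows. If ${\cal H}$ contains a graph from ${\cal S}$, the ${\cal H}$-subgraph-free graphs have bounded treewidth, so C1 gives a polynomial-time algorithm. If ${\cal H}\cap{\cal S}=\emptyset$, put $j\df\max_{H\in{\cal H}}|V(H)|$; by C3, with C2 as the base case that $\Pi$ is hard on subcubic graphs, there is $\ell\geq j$ for which $\Pi$ remains hard on the class ${\cal G}^{\ell}$ of $\ell$-subdivisions of subcubic graphs. Every member of ${\cal G}^{\ell}$ is subcubic, has girth more than $\ell\geq j$, and has its degree-$3$ vertices pairwise at distance at least $\ell+1$, so the identical case analysis shows ${\cal G}^{\ell}$ is contained in the ${\cal H}$-subgraph-free graphs; since C2 also places $\Pi$ in $\NP$, it is $\NP$-complete on ${\cal H}$-subgraph-free graphs. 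The main obstacle is step (a): upgrading ``large treewidth'' to ``contains $H$ as a genuine subgraph'', not merely as a minor. This forces the detour through the minor/topological-minor equivalence for maximum degree~$3$ and through the observation that a subdivision of a member of ${\cal S}$ contains the original --- an observation that fails for forests outside ${\cal S}$, such as a tree with two adjacent vertices of degree~$3$, which is exactly why the lower-bound family in (b) must use walls with long subdivisions.
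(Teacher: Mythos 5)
Theorem~\ref{t-dicho2} is cited from~\cite{JMOPPSV} and is used in this paper as a black box; no proof of it appears here, so there is no in-paper argument to compare against. Your reconstruction is nevertheless correct and is the argument one would expect the cited paper to give. For the bounded-treewidth direction you invoke the Excluded Grid Theorem, pass from grid minor to $H$-minor to $H$-topological-minor (legitimate since every $H\in{\cal S}$ is subcubic), and then use the crucial observation that a subdivision of a member of ${\cal S}$ contains the original as a subgraph; all of these steps are sound. For the converse you verify that a long-enough edge subdivision of any subcubic graph (instantiated as subdivided walls to witness unbounded treewidth, and as the classes ${\cal G}^{\ell}$ from C3 for the hardness side) avoids every connected graph outside ${\cal S}$, and your three-way case split (a degree-$\geq4$ vertex, a cycle, or two degree-$3$ vertices in a tree of maximum degree~$3$) exhaustively covers such graphs, with the subcubicity, girth, and branch-vertex-distance invariants of the $\ell$-subdivision ruling out each case. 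The only spot a careful reader would want spelled out is the claim that the $|V(H)|\times|V(H)|$ grid contains $H\in{\cal S}$ as a subgraph: one clean way is to embed each component $C$ of $H$ in a $3\times|V(C)|$ block (centre of a subdivided claw on the middle row, two arms bending into the adjacent rows, third arm along the middle row; a path along a single row) and lay the blocks side by side, which fits for $|V(H)|\geq3$ and is trivial otherwise. This is a quantitative nicety, not a gap.
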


\noindent
Examples of C123-problems include {\sc Independent Set}, {\sc Dominating Set}, {\sc Long Path}, {\sc Max Cut}, {\sc List Colouring}, 
 {\sc Disjoint Paths},  {\sc Odd Cycle Transversal}, {\sc Perfect Matching Cut}, {\sc Steiner Tree}, and many more; see a (long) table of problems in~\cite{JMOPPSV}. 
 
 Nevertheless, there are many well-known graph problems that are not C123. For example, {\sc Colouring}~\cite{Br41}, {\sc Connected Vertex Cover}~\cite{UKG88}, {\sc Feedback Vertex Set}~\cite{UKG88}, 
 {\sc Independent Feedback Vertex Set}~\cite{JMPPSV} and {\sc Matching Cut}~\cite{Ch84}  do not satisfy C2, whereas   {\sc Hamilton Path} and {\sc $k$-Induced Disjoint Paths} do not satisfy C3~\cite{MPPSV}.

There are also problems that only satisfy C2 and C3 but not C1. For example, {\sc Subgraph Isomorphism} is \NP-complete even for input pairs of path-width~$1$. A few years ago, Bodlaender et al.~\cite{BHKKOO20} settled the complexity of {\sc Subgraph Isomorphism} for $H$-subgraph-free graphs for connected graphs~$H$ except $H=P_5$. For disconnected graphs $H$, they made significant progress and reduced all open cases to $H = P_5$ and $H=2P_5$. 
This shows that the following question is challenging: 

\medskip
\noindent
{\it How do \emph{C23-problems}, i.e., that satisfy C2 and C3 but not C1, behave for $H$-subgraph-free graphs? Can we still classify their computational complexity?}

\medskip
\noindent
We consider this question for {\sc Steiner Forest}.
A {\it Steiner forest} of a graph $G$, with a set $S=\{(s_1,t_1),\ldots,(s_p,t_p)\}$ of specified pairs of vertices called {\it terminals},  is a subgraph $F$ of $G$, such that $s_i$ and $t_i$, for every $i\in \{1,\ldots,p\}$, belong to the same connected component of $F$. 

\problemdef{Steiner Forest}{A graph $G$, a set $S$ of terminal pairs and an integer $k$.}{Does $(G,S)$ have a Steiner forest $F$ with
$|E(F)|\leq k$?}

\noindent
In our problem definition, we consider unweighted graphs and the goal is to find Steiner forests with a small number of {\it edges}. Moreover, {\sc Steiner Forest} generalizes the C123-problem {\sc Steiner Tree}, which is to decide whether for a given integer~$k$, a graph $G$ with some specified set $S$ of vertices has a tree~$T$ with $|E(T)|\leq k$ containing every vertex of $S$: take all pairs of vertices of $S$ as terminal pairs to obtain an equivalent instance of {\sc Steiner Forest}.

For a constant~$c$, a {\it $c$-deletion set} of a graph $G=(V,E)$ is a set $T\subseteq V$ such that each connected component of $G-T$ has size at most~$c$. The {\it $c$-deletion set number} of $G$ is the size of a smallest $c$-deletion set; see e.g.~\cite{BES87,BDKOP22,DEGKO17,FF18,FMS16} for results on this parameter and related ones, such as the vertex integrity, safe number and fracture number. 
Bateni, Hajiaghayi and Marx~\cite{BHM11} proved the following (see also Section~\ref{s-npc}).\footnote{Previously, \NP-completeness for treewidth~$3$ was known for 
 {\sc Weighted Steiner Forest}~\cite{Ga10}.}
  
\begin{theorem}[\cite{BHM11}]\label{t-tw}
{\sc Steiner Forest} is polynomial-time solvable for graphs of treewidth at most~$2$, but \NP-complete for graphs of treewidth~$3$, tree-depth~$4$, and $3$-deletion set number~$2$.
\end{theorem}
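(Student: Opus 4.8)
The plan is to prove the three claims separately; membership in \NP\ is immediate throughout, since a Steiner forest is a subgraph and one checks in polynomial time that it has at most $k$ edges and connects every terminal pair, so for the negative claims it suffices to give polynomial reductions. Two elementary facts about the parameters streamline matters. First, $\tw(G)\le \td(G)-1$ for every graph $G$, so \NP-hardness on graphs of tree-depth $4$ at once yields the treewidth-$3$ case. Second, $\td(G)\le 3$ forces $\tw(G)\le 2$, and graphs of treewidth at most $2$ are exactly the $K_4$-minor-free graphs, so the polynomial-time algorithm below also settles graphs of tree-depth at most $3$; hence the thresholds ``treewidth $3$'' and ``tree-depth $4$'' are both best possible, and the ``$3$-deletion set number $2$'' bound complements the positive results of this paper on $2$-deletion sets. (This recovers the theorem of Bateni, Hajiaghayi and Marx~\cite{BHM11}; we indicate a self-contained treatment.)

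For the tractable case I would work with a width-$2$ tree decomposition, equivalently with the series--parallel decomposition of each biconnected block: build the decomposition tree whose leaves are edges and whose internal nodes are series or parallel compositions, so that every node $t$ carries a two-terminal subgraph $G_t$ with poles $a_t,b_t$ through which $G_t$ attaches to the rest of $G$. The algorithm is a bottom-up dynamic program storing, for each relevant ``interface state'' of $G_t$, the least number of edges of a subforest $F_t$ of $G_t$ realizing it; a state records (roughly) whether $a_t$ and $b_t$ share a component of $F_t$, for each pole which terminal pairs its component still has to connect to a vertex outside $G_t$, and the guarantee that every other terminal pair with both endpoints inside $G_t$ is already satisfied. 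Series and parallel compositions combine two tables by identifying the shared pole(s), taking unions of edge sets, recording newly satisfied pairs, and recomputing the ``open'' data, with the optimum read off at the root. The naive version of this is exactly the exponential Steiner-Forest dynamic program, and the whole difficulty here --- the step I expect to be the main obstacle of this part --- is to show that on a $K_4$-minor-free graph only polynomially many states are ever relevant. The starting observation is that, since $G_t$ meets the rest of $G$ only in $a_t$ and $b_t$, every tree of a global solution restricted to $G_t$ is a union of at most two components of $F_t$; the key lemma is then that an optimal solution can be normalised so that the ``open'' data carried by those components ranges over only polynomially many possibilities. It is exactly this collapse that fails at treewidth $3$, which is what makes the hardness results possible.

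For the hardness part I would give a gadget reduction from an \NP-hard ``make local choices subject to consistency constraints'' problem, a natural candidate being \textsc{Exact Cover by $3$-Sets} (or a variant of \textsc{Not-All-Equal $3$-Satisfiability}). The target graph is built around a constant-size set of ``hub'' vertices, through which essentially every connection between distinct gadget components is forced to pass, together with many gadget components of constant size; a terminal pair whose endpoints lie in two different gadgets must route through the hubs, so a small Steiner forest has to use the hub-incident edges economically, and the edge budget $k$ is tuned so that budget-respecting Steiner forests correspond exactly to consistent choices in the source instance. For the tree-depth-$4$ (hence treewidth-$3$) statement I would place the hubs at the top two elimination levels and hang the gadgets, each of constant depth once the hubs are added, beneath them. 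For the $3$-deletion set number $2$ statement the hub set must consist of exactly two vertices and every component of the graph minus the hubs must have at most three vertices; this is the delicate point, because one cannot simulate large edge weights by subdividing hub-incident edges (that would enlarge the components), so the ``bottleneck at the hubs'' has to be driven purely by how many hub-incident edges a solution buys and by the way several demands share a single terminal inside a gadget of size at most $3$.

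The step I expect to be hardest overall is the correctness of this last, most constrained reduction: showing both that a consistent solution of the source instance yields a Steiner forest within budget and, conversely, that every Steiner forest within the budget decodes to a consistent solution, since the tiny gadgets leave essentially no slack while the two hubs are a shared resource used by every gadget at once. Once this is in place, combining it with the treewidth-$2$ algorithm and the observations on $\tw$ and $\td$ above delivers all parts of the statement.
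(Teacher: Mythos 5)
The paper itself does not prove this theorem from scratch: it is an attributed citation of Bateni, Hajiaghayi and Marx~\cite{BHM11}, and the only new argument in the paper is the inspection of their published gadget in Lemma~\ref{l-np}, where it is observed that the gadget has a $3$-deletion set $\{v_0,v_1\}$ of size~$2$ and tree-depth~$4$ (layer $\{v_0\}$, then $\{v_1\}$, then the $c$- and $x$-type vertices, then the $a$- and $b$-type vertices). Both the treewidth-$2$ algorithm and the treewidth-$3$ hardness are outsourced entirely to~\cite{BHM11}. Your proposal instead attempts a from-scratch reconstruction, which is a legitimate route, and your structural instincts match the actual construction: the algorithmic side is a dynamic program over a series--parallel decomposition with interface states at the two poles, and the hardness side is a reduction built around two ``hub'' vertices $v_0,v_1$ with constant-size gadgets attached (in~\cite{BHM11} the source is a restricted SAT variant they call R-formulas, not X3C or NAE-3SAT, though that choice is not fundamental). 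Your observations that $\tw(G)\le\td(G)-1$ (so the treewidth-$3$ claim follows from the tree-depth-$4$ one) and that $\td\le3$ already gives $\tw\le2$ are correct and tighten the statement nicely.

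That said, both of the steps you yourself flag as ``the main obstacle'' and ``the hardest overall'' are genuine gaps, not just hard details. For the treewidth-$2$ algorithm, the entire content of the positive result is the normalisation lemma showing that only polynomially many pole-interface states of the SP dynamic program are ever relevant; the na\"{\i}ve state space records which demand pairs remain ``open'' at each pole and is exponential, and it is precisely the $K_4$-minor-free structure that lets one collapse it. Without a proof of this collapse, your algorithm as written is the generic exponential DP. For the $3$-deletion-set-number-$2$ hardness, you sketch the shape of the gadget but give no concrete construction, no edge budget, and no correctness argument; as you rightly note, there is essentially no slack (two shared hubs, components of size at most three, no edge weights to hide behind), so the construction cannot be left abstract. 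Since this is a cited result, the honest fix is either to reproduce the~\cite{BHM11} gadget explicitly and carry out your two missing lemmas, or to do what the paper does and cite~\cite{BHM11} for the treewidth claims while proving only the additional parameter bounds by direct inspection of Figure~\ref{f-gs}.
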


\noindent
This shows in particular that {\sc Steiner Forest} does not satisfy C1, unlike {\sc Steiner Tree}~\cite{ALS91}. 
    As {\sc Steiner Tree} satisfies C2 and C3~\cite{JMOPPSV}, {\sc Steiner Forest} satisfies C2 and C3 and is thus a C23-problem, unlike {\sc Steiner Tree} which is  C123~\cite{JMOPPSV}. This leaves the complexity of {\sc Steiner Forest} on $H$-subgraph-free graphs tantalizingly open.

\subsection*{Our Results}

Recall that ${\cal S}$ is the class of disjoint unions of paths and subdivided claws. 
For positive integers $a$ and $b$, we use $K_{a,b}$ to denote the complete bipartite graph with $a$ vertices on one side and $b$ on the other. We use $S_{a,b,c}$ to denote the graph obtained from the claw ($K_{1,3}$) by subdividing the three edges $a-1$, $b-1$, and $c-1$ times respectively. We use $P_a$ to denote the path on $a$ vertices.
See Figure~\ref{fig:smallgraphs} for some examples of these graphs.
For two graphs $H_1$ and $H_2$, we write $H_1\subseteq H_2$ if $H_1$ is a subgraph of $H_2$, i.e., $V(H_1)\subseteq V(H_2)$~and $E(H_1)\subseteq E(H_2)$. We write $H_1+H_2$ to mean the disjoint union of $H_1$ and $H_2$ and $sH_1$ to denote the disjoint union of $s$ copies of $H_1$. 
Our results on {\sc Steiner Forest} for $H$-subgraph-free graphs are:

\begin{figure}
\begin{center}
	\includegraphics[scale=0.7]{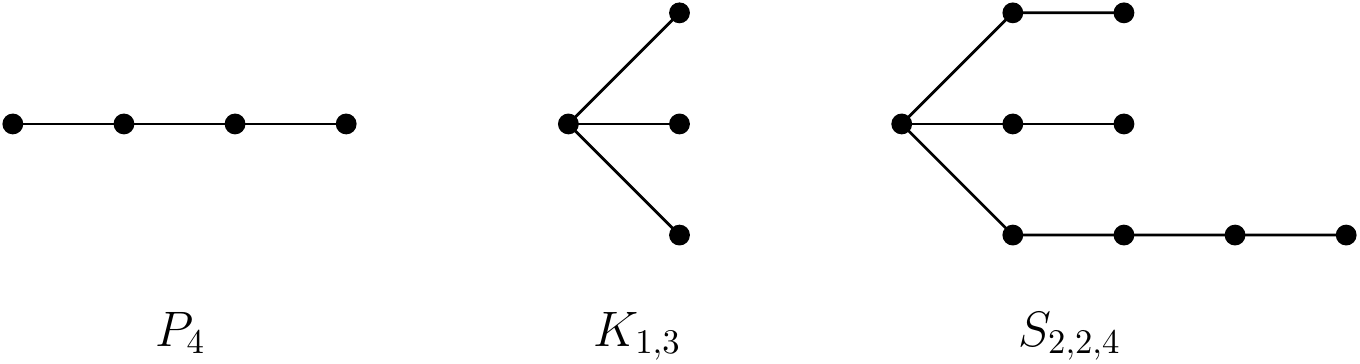}
\end{center}
\caption{Illustration of the graphs $P_4$, $K_{1,3}$, and $S_{2,2,4}$.}
\label{fig:smallgraphs}
\end{figure}

\begin{theorem}\label{t-main}
For a graph~$H$, {\sc Steiner Forest} on $H$-subgraph-free graphs is\\[-5mm]
\begin{itemize}
\item polynomial-time solvable if  $H\subseteq 2K_{1,3}+P_3+sP_2, 2P_4+P_3+sP_2, P_9+sP_2$ or $S_{1,1,4}+sP_2$ for each $s\geq 0$, and
\item \NP-complete if $H\supseteq 3K_{1,3}, 2K_{1,3}+P_4, K_{1,3}+2P_4,3P_4$ or if $H\notin {\cal S}$.
\end{itemize}
\end{theorem}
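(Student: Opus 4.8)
The plan is to assemble the classification from several polynomial-time algorithms (the positive side) and a handful of hardness reductions plus an appeal to the subdivision machinery (the negative side), exactly mirroring how Theorem~\ref{t-dicho2} is proved for C123-problems but with the extra ingredient that {\sc Steiner Forest} fails C1.

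For the \NP-completeness side, I would first observe that if $H \notin {\cal S}$ then the class of $H$-subgraph-free graphs contains all subdivided graphs of some subcubic class (since $H$ not being a disjoint union of paths and subdivided claws means $H$ has a vertex of degree $\geq 4$, a cycle, or two vertices of degree $3$ on the same component — in each case sufficiently subdivided subcubic graphs avoid $H$). Combining this with the fact that {\sc Steiner Tree}, hence {\sc Steiner Forest}, satisfies C2 and C3 gives \NP-completeness, just as in the proof of Theorem~\ref{t-dicho2}. For the explicit finite cases $H \supseteq 3K_{1,3},\, 2K_{1,3}+P_4,\, K_{1,3}+2P_4,\, 3P_4$, I would start from Theorem~\ref{t-tw}, which gives \NP-completeness for graphs with a $3$-deletion set of size $2$: such graphs have only two ``big'' vertices and otherwise components of size at most $3$, so with a little care (e.g.\ padding the small components, or noting the hard instances can be taken to consist of many disjoint small gadgets plus the deletion set) the hard instances can be made to be $H$-subgraph-free for each of these $H$. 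Concretely one checks that the gadget graph used in the treewidth-$3$ reduction does not contain $3K_{1,3}$, $2K_{1,3}+P_4$, $K_{1,3}+2P_4$, or $3P_4$ as a subgraph, and monotonicity (if $G$ is $H$-subgraph-free and $H \subseteq H'$ then $G$ is $H'$-subgraph-free) then extends this to all larger $H$.

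For the polynomial-time side, the key point is that each of the four maximal graphs $2K_{1,3}+P_3+sP_2$, $2P_4+P_3+sP_2$, $P_9+sP_2$, and $S_{1,1,4}+sP_2$ is a disjoint union of a ``core'' graph in ${\cal S}$ with extra $P_2$'s and a $P_3$; being $H$-subgraph-free for such an $H$ forces strong structure on the input graph. In each case the strategy is a win/win: either the graph has bounded treewidth (and we are not done, since C1 fails — but we can still hope the structure is tame enough), or more usefully, the graph has a small $c$-deletion set for a constant $c$ depending only on $H$, and then we invoke the two algorithmic results announced in the abstract — {\sc Steiner Forest} is FPT parameterized by the size of a $1$-deletion set (vertex cover), and polynomial-time solvable for a $2$-deletion set of size at most $2$ — as subroutines. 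For instance, excluding $P_9 + sP_2$ limits long induced paths after removing a bounded vertex set; excluding $2K_{1,3} + P_3 + sP_2$ means that after deleting a bounded set hitting the ``claw-rich'' part, what remains is a disjoint union of short paths (bounded-size components), i.e.\ a bounded-size $c$-deletion set. The bulk of the work is the structural case analysis showing that $H$-subgraph-freeness yields a $c$-deletion set whose size is bounded in terms of $H$ (not just a constant), and then feeding the right slice of that deletion set into the FPT / polynomial subroutines; the $P_2$ and $P_3$ summands are absorbed because excluding, say, $P_3$ in a component forces that component to be a matching.

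The main obstacle I expect is the structural analysis for $S_{1,1,4}+sP_2$ and $2P_4+P_3+sP_2$: unlike $P_9+sP_2$ (where forbidding long paths is relatively clean) these forbid specific subdivided-claw / multiple-short-path patterns, and showing that their absence forces a bounded $c$-deletion set requires a careful argument about how many vertex-disjoint copies of $K_{1,3}$ or $P_4$ can be ``spread out'' in the graph — essentially a Ramsey-type / sunflower-type bound on disjoint small subgraphs. Once such a bounded deletion set is extracted, reconciling the value of $c$ with what the subroutines can handle (the $2$-deletion-set algorithm only works for deletion sets of size at most $2$, which is very restrictive) is the delicate final step, and it is plausible that for some of these cases one needs a bespoke dynamic-programming / contraction argument rather than a black-box call to the subroutines. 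I would therefore organize the proof as: (i) the subdivision/Steiner-Tree argument for $H \notin {\cal S}$; (ii) the four hardness reductions via Theorem~\ref{t-tw}; (iii) four structural lemmas, one per maximal tractable $H$, each producing a bounded $c$-deletion set or equivalent structure; (iv) the algorithmic payoff combining (iii) with the two deletion-set subroutines; and (v) monotonicity to pass from the maximal $H$'s to all $H \subseteq$ them.
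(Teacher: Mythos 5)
Your outline of the \NP-completeness side matches the paper: one checks the treewidth-$3$ gadget of Bateni--Hajiaghayi--Marx directly (every $P_4$ or $K_{1,3}$ in it must use one of the two deletion-set vertices $v_0,v_1$, so no three disjoint copies of $P_4$ or $K_{1,3}$ exist), and combines this with the known hardness of {\sc Steiner Tree} when $H\notin{\cal S}$. The $P_2$-absorption step ($H\mapsto H+P_2$ via the vertex-cover \FPT\ subroutine) is also exactly the paper's Lemma~\ref{l-sp2}, and for $P_9$, $2P_4+P_3$ and $S_{1,1,4}$ the paper indeed does what you describe: restrict to $2$-connected graphs via Lemma~\ref{l-2con}, do a structural case analysis, and land on a bounded vertex cover (or a $2$-deletion set of size~$2$ in one subcase of $P_9$), then call the deletion-set subroutines.

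There is, however, a genuine gap in your plan for the $2K_{1,3}+P_3+sP_2$ branch. You claim that $H$-subgraph-freeness always ``yields a $c$-deletion set whose size is bounded in terms of $H$.'' This is false already for $H=2K_{1,3}$: the wheel $W_n$ (a universal vertex $v$ joined to a cycle $v_1\cdots v_n$) is $2$-connected and $2K_{1,3}$-subgraph-free, since every claw subgraph must use $v$, yet it has treewidth~$3$ and unbounded $c$-deletion set number for every constant $c$ (removing $v$ leaves a long cycle, and any $c$-deletion set must take $\Omega(n/c)$ further vertices). So on this family no bounded-deletion-set subroutine, and no treewidth-$2$ algorithm, applies. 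The paper handles this via a completely different mechanism: Lemma~\ref{l-path} (branching on how a $2$-path meets the solution by contracting it, deleting it, or splitting it at two edges), Lemma~\ref{l-degree} (an $n^{O(k)}$ algorithm for graphs with at most $k$ maximal $2$-paths between vertices of degree $>2$, which is not a deletion-set parameter), and the long case analysis of Lemma~\ref{l-2k13} that, when a high-degree vertex is present, branches on the first and last subpath of the solution inside each long $2$-path and an incident spoke edge. Only after establishing Lemma~\ref{l-2k13} does the paper stack the $P_3$ and $sP_2$ on top, and even the $P_3$ step needs a further extension (Lemma~\ref{l-extension}) allowing a bounded set of extra edges glued onto a size-$2$ $2$-deletion set. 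Your hedge about ``bespoke arguments'' points at $S_{1,1,4}$ and $2P_4+P_3$, which are in fact the cases where your bounded-vertex-cover plan actually goes through; the genuinely non-deletion-set case is $2K_{1,3}$, and without the $2$-path branching machinery your proposal does not cover it.
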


\noindent
The gap between the two cases could be significantly reduced if we could resolve an intriguing question about the parameterized complexity of {\sc Steiner Forest}
(see Section~\ref{s-con} for a detailed discussion).

As graphs of tree-depth~$3$ are $P_8$-subgraph-free, Theorems~\ref{t-tw} and~\ref{t-main} yield a dichotomy:

\begin{corollary}\label{c-d}
For a constant~$t$, {\sc Steiner Forest} on graphs of tree-depth~$t$ is polynomial-time solvable if $t\leq 3$ and \NP-complete if $t\geq 4$.
\end{corollary}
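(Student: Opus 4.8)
The plan is to derive Corollary~\ref{c-d} by combining the hardness side of Theorem~\ref{t-tw} with the polynomial-time side of Theorem~\ref{t-main}, after observing that bounded tree-depth classes sit inside the bounded-path classes covered by the latter. The two directions are genuinely asymmetric: the hard direction is immediate, while the easy direction requires identifying which member of the list in Theorem~\ref{t-main} dominates all tree-depth-$3$ graphs.

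For the \NP-complete direction, note that if $t\geq 4$ then the class of graphs of tree-depth at most~$t$ contains all graphs of tree-depth~$4$, and by Theorem~\ref{t-tw} {\sc Steiner Forest} is already \NP-complete on that smaller class; since adding more graphs to the class can only preserve hardness, we are done. (One should also remark that membership in tree-depth at most~$t$ is decidable and the reduction instances in~\cite{BHM11} indeed have tree-depth~$4$, so the restriction is well-defined.)

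For the polynomial-time direction, the key structural fact is that a connected graph of tree-depth at most~$3$ has no path on $2^3=8$ vertices as a subgraph, i.e.\ it is $P_8$-subgraph-free; this is the standard bound that tree-depth~$d$ forbids $P_{2^d}$, stated already in the excerpt for $t=3$. Hence any graph of tree-depth at most~$3$ is $P_8$-subgraph-free, and therefore $P_9$-subgraph-free (a fortiori, since $P_8\subseteq P_9$). Taking $s=0$ in the first bullet of Theorem~\ref{t-main}, {\sc Steiner Forest} is polynomial-time solvable on $H$-subgraph-free graphs for $H=P_9$, and since graphs of tree-depth at most~$3$ form a subclass of $P_9$-subgraph-free graphs, the algorithm applies directly. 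The cases $t=1,2$ are subsumed, as those classes are contained in the tree-depth-$3$ class. (Alternatively one can invoke Theorem~\ref{t-tw}: tree-depth at most~$2$ implies treewidth at most~$1$, but citing the $P_9$ algorithm uniformly is cleaner.)

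I expect no serious obstacle here; the only point that needs care is making the tree-depth-to-forbidden-path implication precise and in the right direction (forbidding a \emph{long} path, not a short one), and confirming that the $\mathcal{NP}$-hardness instances of Theorem~\ref{t-tw} really do have tree-depth exactly~$4$ so that the boundary is placed correctly. Everything else is a one-line containment argument. If one wanted to be fully self-contained, the short inductive proof that tree-depth~$d$ graphs are $P_{2^d}$-subgraph-free could be included, but it is standard folklore and a citation suffices.
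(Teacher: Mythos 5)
Your proposal is correct and follows the paper's own argument exactly: the paper also notes that tree-depth-$3$ graphs are $P_8$-subgraph-free (hence $P_9$-subgraph-free), applies the $P_9$ case of Theorem~\ref{t-main} for the polynomial side, and cites Theorem~\ref{t-tw} for \NP-completeness at tree-depth~$4$. The only difference is that you make the folklore $P_{2^d}$ bound and the containment reasoning explicit where the paper states them in a single line.
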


\noindent
The \NP-hardness part of Theorem~\ref{t-main} follows from the gadget from Theorem~\ref{t-tw} and \NP-completeness of {\sc Steiner Forest} when $H\notin {\cal S}$~\cite{BBJPPL21}, as shown in Section~\ref{s-npc}.
For the polynomial part, we first make some observations on $2$-connectivity and high-degree vertices in Section~\ref{s-obs}.
Then, in Section~\ref{s-c}, we show that {\sc Steiner Forest} is \FPT\ with respect to the {\it vertex cover number} ($1$-deletion set number) of a graph, and there we also prove the polynomial-part of the following dichotomy, in which the \NP-completeness part is due to Theorem~\ref{t-tw} \cite{BHM11}.

\begin{theorem}\label{t-2d2}
For a constant~$c$, {\sc Steiner Forest} on graphs with a $c$-deletion set of size at most~$2$  is polynomial-time solvable if $c\leq 2$ and \NP-complete if $c\geq 3$.
\end{theorem}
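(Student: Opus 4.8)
The plan is to prove the polynomial-time part of Theorem~\ref{t-2d2}, namely that \textsc{Steiner Forest} is polynomial-time solvable on graphs $G$ that admit a $2$-deletion set $T$ with $|T|\leq 2$ (the \NP-completeness for $c\geq 3$ being Theorem~\ref{t-tw}). Write $T=\{u,v\}$ (the cases $|T|\le 1$ being easy subcases). Every component of $G-T$ has at most two vertices, so it is a single vertex or a single edge, and each such component attaches to $G$ only through edges to $u$ and/or $v$. The key structural observation is that, up to isomorphism of the attachment pattern, there are only a constant number of \emph{types} of components of $G-T$: a single vertex adjacent to $\emptyset$, $\{u\}$, $\{v\}$, or $\{u,v\}$; and an edge $xy$ where the pair of neighbourhoods $(N(x)\cap T, N(y)\cap T)$ ranges over a constant-size set of possibilities. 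So the instance is described by the (polynomially bounded) multiplicities of each type, together with the edge $uv$ (present or not) and the terminal pairs.

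First I would preprocess the terminal set. Using the observations of Section~\ref{s-obs} and standard reductions, I would argue that we may assume every terminal pair has at least one endpoint in $T$, or both endpoints in the same small component, or else the pair can be handled by connecting both endpoints into $T$; more precisely, a Steiner forest either keeps a pair inside a single two-vertex component (costing one edge, only possible for an edge-component containing both) or routes it through $u$ and/or $v$. After this normalization, the ``shape'' of any optimal solution is governed by a bounded amount of information: (i) which of $u,v$ lie in the forest and whether $u$ and $v$ end up in the same component of $F$; (ii) for terminals forced to reach $u$ (resp.\ $v$), a cheap connection; and (iii) whether we ``merge'' $u$ and $v$ by paying for a $u$--$v$ path, which costs either $1$ (the edge $uv$, if present) or $2$ (through a common-neighbour component). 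The number of relevant global configurations is constant.

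The core of the algorithm is then: for each of the constantly many global configurations, greedily/optimally select, \emph{independently per component type}, how each component contributes. Within a fixed configuration, the contribution of a component is forced by which of its vertices are terminals and whether $u,v$ are available as connection points, and the only genuine choices are (a) for an edge-component with a single terminal endpoint, whether to buy one edge (the internal edge, reaching $T$ via the non-terminal endpoint) or route directly, and (b) for edge-components with both $u$ and $v$ as neighbours of some endpoint, whether to use it as the cheap $u$--$v$ merger. Since components of the same type are interchangeable, for (b) we just need to know whether at least one suitable component exists; for (a) the decision is local and we take the cheaper option for each. Thus each configuration yields a solution cost by summing a constant number of terms, each computed in polynomial time, and we output the minimum. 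Correctness follows from an exchange argument: any optimal Steiner forest induces one of the global configurations, and within that configuration our per-component choices are no worse than the optimum's.

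The main obstacle I anticipate is the bookkeeping around terminal pairs whose \emph{both} endpoints lie outside $T$ (in small components): such a pair must be routed so that both endpoints reach a common point of $T$, which interacts with whether $u$ and $v$ are merged, and one has to be careful that a pair with one endpoint adjacent only to $u$ and the other adjacent only to $v$ \emph{forces} the $u$--$v$ merger. Handling these forcing constraints cleanly — showing they can all be read off from the configuration and never require coordinating more than two components at once — is where the argument needs care, but it is a finite case analysis rather than anything deep. Everything else (the constant bound on component types, the polynomial bound on multiplicities, and the summation over configurations) is routine.
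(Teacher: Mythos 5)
Your high-level structural observations are sound (there are only constantly many ``types'' of components of $G-T$, and a Steiner forest connects each surviving component into $T$ through $u$, $v$, or both), and they overlap with the starting point of the paper's argument. However, the proposal has a genuine gap in the two-component case, exactly at the point you flag as ``where the argument needs care.'' You assert that the forcing constraints arising from terminal pairs with both endpoints outside $T$ ``never require coordinating more than two components at once,'' and that therefore the per-component choices can be made greedily within a constant number of global configurations. This is not true: each individual terminal pair touches at most two components, but the constraint ``these two terminals must end up in the same tree of the forest'' is an equivalence constraint, and such constraints chain transitively through shared components (a $2$-vertex component $\{x,y\}$ can have $x$ and $y$ in different terminal ``schools''), so an unbounded number of components can be linked. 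Once $u$ and $v$ are fixed to lie in different trees of $F$, every such chain of components must collectively decide whether to route to $u$ or to $v$, and because the routing cost through a shared $2$-vertex component depends on the decisions of \emph{both} schools touching it, the resulting problem is a weighted binary CSP rather than a sum of independent per-component terms. In other words, the ``greedy per type'' step does not decompose; this is precisely the phenomenon that makes the problem \NP-complete already for $3$-deletion sets of size two (Theorem~\ref{t-tw}) and for deletion sets of size three, as the paper remarks in its conclusion.

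The paper avoids this by working harder before it ever has to optimize: it first reduces to the $2$-connected case (Lemma~\ref{l-2con}), branches over the $O(n^2)$ short $u$--$v$ paths (contracting each to get a $1$-deletion set), then in the genuinely two-component case deletes non-terminal vertices not adjacent to $u$ or $v$, branches over the $O(n^3)$ choices of which (at most three) non-terminal vertices are used, and finally prunes edges inside $2$-vertex components (using $2$-connectivity to restrict their adjacency patterns) until the attachment pattern of every residual component is $\{ux,vy,xy\}$ or $\{uz,vz\}$. After this pruning the per-component contribution is forced, so no CSP remains. Without an argument of this kind — or some other mechanism that breaks the transitive coupling of schools across shared components — the ``constant number of global configurations plus independent local choices'' plan does not go through. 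As a smaller point, your claim that merging $u$ and $v$ costs $1$ or $2$ also misses the case of a $u$--$v$ path through both vertices of an edge-component, which costs $3$; the paper handles this uniformly by enumerating all $u$--$v$ paths with at most two internal vertices.
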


\noindent
Using the algorithms from Sections~\ref{s-obs} and~\ref{s-c} as subroutines, we prove the polynomial part of Theorem~\ref{t-main} in Section~\ref{s-poly}. 

Finally, a graph parameter $p$ {\it dominates} a graph parameter~$q$ if there is a function~$f$ such that $p(G)\leq f(q(G))$ for every graph~$G$.
If $p$ dominates $q$ but $q$ does not dominate $p$, then $p$ is {\it more powerful} than $q$.   
From the definitions, it follows that treewidth is more powerful
than treedepth, which is more powerful than $c$-deletion set number (fixed $c\geq 2$), which is more powerful than the vertex cover number ($1$-deletion set number).
Given the hardness results in Theorems~\ref{t-tw}, Corollary~\ref{c-d} and Theorem~\ref{t-2d2}, this
also gives an indication to what extent our new \FPT\ result for vertex cover number is best possible.

\section{NP-Completeness Results}\label{s-npc}

For a rooted forest~$F$, the  \emph{closure} $\undC(F)$ is the graph with vertex set $V(F)$  with the property that two vertices $u$ and $v$ are adjacent in $\undC(F)$ if and only if $u$ is an ancestor of $v$ in $F$. The {\it depth} of $F$ (number of ``layers'') is equal to the height of $F$ plus~$1$. We say that $F$ is a {\it tree-depth decomposition} of a graph $G$ if $G$ is a subgraph of $\undC(F)$. The \emph{tree-depth} of $G$ is the minimum depth over all tree-depth decompositions of $G$.

Bateni, Hajiaghayi and Marx~\cite{BHM11} explicitly proved that {\sc Steiner Forest} is \NP-complete for graphs of treewidth~$3$, see also Theorem~\ref{t-tw}. 
The additional properties in the lemma below can be easily verified from inspecting their gadget, which is displayed in Figure~\ref{f-gs}.

\begin{figure}[t!]
\centering
\includegraphics[scale=0.82]{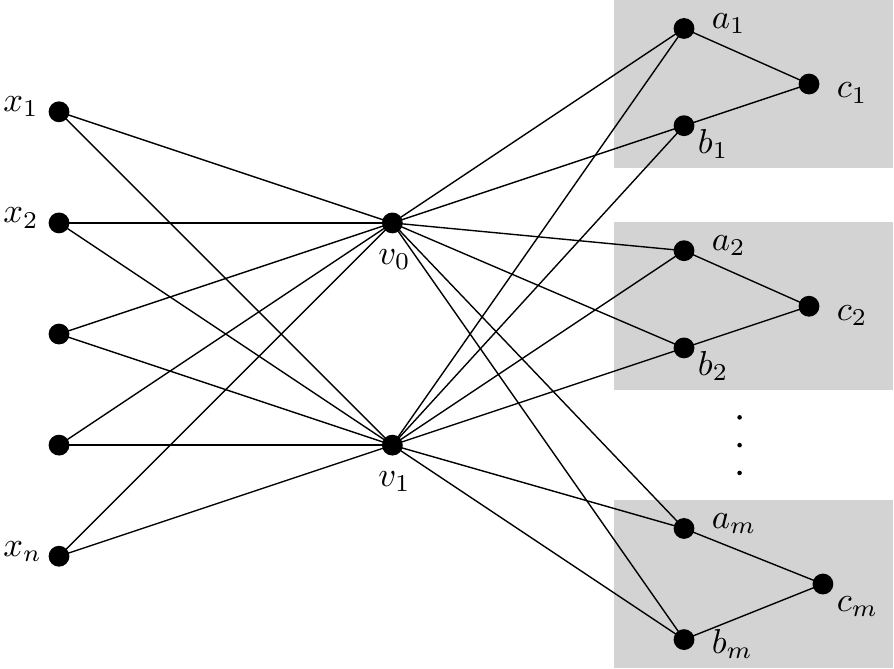}
\caption{The graph $G$ (gadget from~\cite{BHM11}) used in the proof of Lemma~\ref{l-np}.}\label{f-gs}\label{fig:gadgets}
\vspace*{-1mm}
\end{figure}

\begin{lemma}\label{l-np}
{\sc Steiner Forest} is \NP-complete for $(3K_{1,3},2K_{1,3}+P_4,K_{1,3}+2P_4,3P_4)$-subgraph-free graphs of tree-depth~$4$ with $3$-deletion set number~$2$.
\end{lemma}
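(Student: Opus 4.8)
The plan is to reuse the reduction of Bateni, Hajiaghayi and Marx~\cite{BHM11} verbatim. As recorded in Theorem~\ref{t-tw}, it is a polynomial-time many-one reduction from an \NP-hard problem to {\sc Steiner Forest} whose output instance has an underlying graph~$G$ (depicted in Figure~\ref{f-gs}) that simultaneously has treewidth~$3$, tree-depth~$4$, and a $3$-deletion set~$T$ with $|T|\le 2$. Membership in \NP\ is immediate, as a Steiner forest with at most~$k$ edges is a polynomial-size certificate. So the only additional claim to verify is that this same graph~$G$ is $(3K_{1,3},2K_{1,3}+P_4,K_{1,3}+2P_4,3P_4)$-subgraph-free, and I would obtain this solely from the existence of a $3$-deletion set of~$G$ of size at most~$2$.

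The observation that does the work is uniform over the four forbidden graphs: each of $3K_{1,3}$, $2K_{1,3}+P_4$, $K_{1,3}+2P_4$ and $3P_4$ is a disjoint union of three connected graphs, each of which (a copy of $K_{1,3}$ or of $P_4$) has exactly four vertices. Suppose, for contradiction, that~$G$ contains such a graph~$H$ as a subgraph; write $H=H_1+H_2+H_3$ with the $H_i$ pairwise vertex-disjoint, connected, and $|V(H_i)|=4$, and fix a $3$-deletion set~$T$ of~$G$ with $|T|\le 2$. If $V(H_i)\cap T=\emptyset$ for some~$i$, then the four vertices of~$H_i$ all lie in $G-T$, and since $H_i$ connects them they lie inside a single connected component of $G-T$ — impossible, since every such component has at most three vertices. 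Hence $V(H_i)\cap T\ne\emptyset$ for each $i\in\{1,2,3\}$; as the sets $V(H_1),V(H_2),V(H_3)$ are pairwise disjoint, this forces $|T|\ge 3$, contradicting $|T|\le 2$. Therefore $G$ contains none of the four graphs as a subgraph, and the lemma follows by combining this with the \NP-hardness, tree-depth~$4$ and size-$2$ $3$-deletion set already supplied by the reduction.

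I do not anticipate a real obstacle: the forbidden-subgraph-freeness is just the one-line pigeonhole argument above, and the only step that genuinely requires inspecting the gadget in Figure~\ref{f-gs} is confirming that its two designated hub vertices form a $3$-deletion set (deleting them leaves components of size at most three) and that the resulting tree-depth decomposition has depth exactly~$4$ — both of which are routine to read off from the figure, exactly as~\cite{BHM11} does implicitly. The mild care needed is that ``subgraph'' here is not the induced notion, but that is harmless: fixing an injective homomorphism from~$H$ into~$G$ makes the images of $V(H_1),V(H_2),V(H_3)$ pairwise disjoint, which is all the pigeonhole step uses.
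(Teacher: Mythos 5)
Your proposal is correct and matches the paper's approach: it reuses the Bateni--Hajiaghayi--Marx gadget (Figure~\ref{f-gs}), notes that $\{v_0,v_1\}$ is a $3$-deletion set and that the tree-depth is~$4$, and derives the forbidden-subgraph-freeness from the existence of the small deletion set. The one small (and nice) difference is that you obtain the subgraph-freeness \emph{uniformly} from the $3$-deletion set property via the pigeonhole argument, whereas the paper records the intermediate observation ``every $P_4$ or $K_{1,3}$ in $G$ must use $v_0$ or $v_1$'' by inspecting the gadget; your derivation shows that observation is actually a consequence of $\{v_0,v_1\}$ being a $3$-deletion set, so it applies to any graph with $3$-deletion set number at most $2$, not just this gadget.
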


\begin{proof}
For a given a so-called R-formula $\phi$ with $n$ variables and $m$ clauses,  Bateni, Hajiaghayi and Marx~\cite{BHM11} constructed an instance $(G,S)$ of {\sc Steiner Forest} such that $\phi$ is satisfiable if and only if $(G,S)$ has a solution of $n+3m$ edges. The graph $G$ (which has treewidth~$3$) is illustrated in Figure~\ref{f-gs}. It remains to make three observations. First, every $P_4$ or $K_{1,3}$ in $G$ needs to use either $v_0$ or $v_1$. Second, the set $\{v_0,v_1\}$ is a $3$-deletion set.  Third, $G$ has tree-depth~$4$; take $\{v_0\}$ as layer~0; $\{v_1\}$ as layer~$1$; the set of all $c$-type vertices and $x$-type vertices as layer~$2$; and the set of all $a$-type vertices and $b$-type vertices as layer~$3$.
\end{proof}

\noindent
{\it Proof of \NP-completeness part of Theorem~\ref{t-main}.} 
It is known that {\sc Steiner Tree}, and thus {\sc Steiner Forest}, is \NP-complete for $H$-subgraph-free graphs if $H\notin {\cal S}$~\cite{BBJPPL21}.
The \NP-completeness part of Theorem~\ref{t-main} now follows immediately from this observation and Lemma~\ref{l-np}.

\section{Basic Polynomial Results}\label{s-obs}

A {\it minimum} Steiner forest for an instance $(G,S)$ is one with the smallest number of edges. We denote the number of edges of such a forest by $\sft(G,S)$. We always assume that for any terminal pair $(s,t) \in S$, $s$ and $t$ are distinct (as any pair where $s=t$ can be removed without affecting the feasibility or the size of a minimum solution). A vertex $v$ is a \emph{terminal vertex} if there is a pair $(s,t) \in S$ with $v=s$ or $v=t$.

We now describe several important subroutines for our later algorithms. 
A \emph{cut vertex} of a connected graph is a vertex whose removal yields a graph with at least two connected components. A graph without a cut vertex is \emph{$2$-connected}. A \emph{$2$-connected component} or \emph{block} of a graph is a maximal subgraph that is $2$-connected. 
A graph class is \emph{hereditary} if it is closed under deleting vertices.

\begin{lemma}\label{l-2con}
For every hereditary graph class ${\cal G}$, if {\sc Steiner Forest} is polynomial-time solvable for the subclass of $2$-connected graphs of ${\cal G}$, then it is polynomial-time solvable for ${\cal G}$.
\end{lemma}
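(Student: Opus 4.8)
The plan is to exploit the block decomposition of $G$ together with heredity of $\mathcal{G}$. First I would reduce to $G$ connected: handle connected components separately, declaring the instance infeasible if some terminal pair has its two endpoints in different components, since otherwise every pair lies inside one component. For connected $G$, I would compute in polynomial time the block-cut tree $\mathcal{T}$, whose nodes are the blocks and the cut vertices of $G$, with each cut vertex adjacent to exactly the blocks containing it. Each block $B$ contains no cut vertex, hence is $2$-connected, and $B$ is an induced subgraph of $G$ (adding to $B$ an edge of $G$ between two of its vertices would preserve $2$-connectivity, contradicting maximality of $B$); since $\mathcal{G}$ is hereditary, every block lies in $\mathcal{G}$, so {\sc Steiner Forest} is polynomial-time solvable on each block by hypothesis.

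Next I would ``push'' the terminal pairs onto the blocks. For a terminal pair $(s,t)$, let $B_1,\dots,B_\ell$ be the blocks met, in order, along the unique $s$--$t$ path of $\mathcal{T}$, with $s\in B_1$ and $t\in B_\ell$; for $1\le i\le\ell-1$ let $c_i$ be the cut vertex shared by $B_i$ and $B_{i+1}$, and put $c_0\df s$ and $c_\ell\df t$. For each $i\in\{1,\dots,\ell\}$ I would add the pair $(c_{i-1},c_i)$ (its endpoints are distinct, as $s\ne t$) to a terminal set $S_{B_i}$ of the block $B_i$. Carrying this out for every original terminal pair yields, in polynomial time, an instance $(B,S_B)$ of {\sc Steiner Forest} for each block $B$ (with $S_B=\emptyset$ if no pair touches $B$).

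The heart of the argument is the identity $\sft(G,S)=\sum_B\sft(B,S_B)$, the sum ranging over all blocks of $G$. For ``$\le$'', take a minimum Steiner forest $F_B$ of each $(B,S_B)$ and let $F\df\bigcup_B F_B$: distinct blocks are edge-disjoint, so $|E(F)|=\sum_B|E(F_B)|$, and for each original pair $(s,t)$ every $F_{B_i}$ joins $c_{i-1}$ to $c_i$, so $F$ connects $s=c_0$ through $c_1,\dots$ to $c_\ell=t$; hence $F$ is a Steiner forest of $(G,S)$. For ``$\ge$'', take a minimum Steiner forest $F$ of $(G,S)$ and let $F\cap B$ be the subgraph of $B$ formed by the edges of $F$ inside $B$; as every edge of $G$ lies in exactly one block, $\sum_B|E(F\cap B)|=|E(F)|$. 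Given a pair $(c_{i-1},c_i)\in S_B$ coming from $(s,t)$ with $B=B_i$, fix a simple $s$--$t$ path $P$ in $F$: it passes through $c_{i-1}$ and $c_i$ (which separate $s$ from $t$ in $G$), and the portion of $P$ between its visits to $c_{i-1}$ and $c_i$ uses only vertices and edges of $B_i$; thus $F\cap B_i$ contains a $c_{i-1}$--$c_i$ path, so $F\cap B$ is feasible for $(B,S_B)$, and summing $|E(F\cap B)|\ge\sft(B,S_B)$ over all blocks gives the inequality. Finally, the algorithm builds $\mathcal{T}$ and the instances $(B,S_B)$, solves {\sc Steiner Forest} on every block with the hypothesized polynomial-time routine, and outputs ``yes'' exactly when the instance is feasible and $\sum_B\sft(B,S_B)\le k$, all in polynomial time. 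I expect the only non-routine point to be the step in the ``$\ge$'' direction that the sub-path of $P$ between two consecutive cut vertices $c_{i-1},c_i$ is contained in $B_i$ --- a standard property of block-cut trees; everything else is bookkeeping.
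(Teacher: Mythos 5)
Your proof is correct and rests on the same underlying idea as the paper's: decompose the graph by its blocks, route each terminal pair through the relevant cut vertices, and observe that the Steiner forest cost is additive across blocks. The difference is presentational: you carry out the entire block decomposition at once, compute the block-cut tree explicitly, and prove the global identity $\sft(G,S)=\sum_B\sft(B,S_B)$ directly, whereas the paper proceeds recursively, peeling off one leaf block $G_1$ with cut vertex $v$ at a time and proving the two-term identity $\sft(G,S)=\sft(G_1,S_1)+\sft(G_2,S_2)$, then recursing on $G_2$. Unwinding the paper's recursion produces exactly your block-level terminal sets $S_B$, so the two arguments are mathematically equivalent; your version is arguably cleaner conceptually (a single additive formula), while the paper's recursive split avoids having to carefully describe the $c_0,\dots,c_\ell$ sequence for each pair and only ever reasons about two pieces at a time. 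One small imprecision in your write-up: a block $B$ does generally contain cut vertices \emph{of $G$}; what you mean (and what makes $B$ $2$-connected in the paper's sense) is that $B$ has no cut vertex \emph{of $B$ itself}. This does not affect the argument.
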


\begin{proof}
Let $G$ be any graph in ${\cal G}$ and $S \subseteq V(G) \times V(G)$ be any set of terminal pairs. If $G$ is not connected, then we consider each connected component individually and for each individual instance, we restrict the set of terminal pairs to pairs of vertices of that component. If a terminal pair is split among two distinct components, the instance is obviously infeasible.

We may thus assume that the graph $G$ is connected. We use the following recursive algorithm. Let $A$ be the assumed algorithm for {\sc Steiner Forest} on $2$-connected graphs of ${\cal G}$. If $G$ is $2$-connected, then we may simply call $A$ on $(G,S)$. Otherwise, $G$ has a $2$-connected component that contains only a single cut-vertex of $G$ (i.e.\ any leaf of the block-cut tree of $G$). Let $G_1$ be any such component and let $v$ be the cut vertex such that $G_1-v$ is a connected component of $G-v$. Let $G_2 = G-(V(G_1)\setminus\{v\})$ be the remainder of $G$. We now construct two sets of terminal pairs. For $i=1,2$, let $S_i \subseteq V(G_i) \times V(G_i)$ initially be the set of all terminal pairs in $G_i$ for which both terminals are in $V(G_i)$. Then add to $S_i$ the following pairs: for each $(s_j,t_j) \in S$ such that $s_j \in V(G_i)$ and $t_j \not\in V(G_i)$, add the pair $(s_j,v)$ to $S_i$. By definition, $G_1$ is $2$-connected. Apply $A$ to $(G_1,S_1)$. Recurse on $(G_2,S_2)$.

It is immediate that this algorithm runs in polynomial time. Indeed, running $A$ on $G_1$ takes polynomial time. As $|V(G_2)| < |V(G)|$, we apply $A$ at most $|V(G)|$ times on graphs not larger than $G$ and terminal sets not larger than $S$.

To see correctness, let $F$ be a minimum Steiner forest for $(G,S)$. We claim that for $i=1,2$, $F_i := (V(F) \cap V(G_i), E(F) \cap E(G_i))$ is a solution to $(G_i,S_i)$. Indeed, for a pair $(s_j,t_j) \in S$ for which $s_j,t_j \in V(G_i)$, let $P_j$ be an $s_j$-$t_j$ path in $F$. Since $v$ is a cut vertex, $P_j$ cannot contain any edges of $E(G)-E(G_i)$. Hence, $P_j$ exists in $F_i$. For a pair $(s_j,t_j) \in S$ for which $s_j \in V(G_i)$ and $t_j \not\in V(G_i)$, let $Q_j$ be an $s_j$-$t_j$ path in $F$. As $v$ is a cut vertex, $Q_j$ must contain $v$ and $E(Q_j) \cap E(G_i) \subseteq E(F_i)$ is a $s_j$-$v$ path in $G_i$. Hence, $F_i$ is a solution to $(G_i,S_i)$ and thus $\sft(G,S) \geq \sft(G_1,S_1) + \sft(G_2,S_2)$.

Conversely, for $i=1,2$, let $F_i$ be a minimum Steiner forest for $(G_i,S_i)$. We claim that $F := (V(F_1) \cup V(F_2), E(F_1) \cup E(F_2))$ is a solution to $(G,S)$. Consider any terminal pair $(s_j,t_j) \in S$. If $s_j,t_j \in V(G_i)$ for some $i\in\{1,2\}$, then $F_i$ contains an $s_j$-$t_j$ path and thus so does $F$. Otherwise, without loss of generality, $S_1$ contains the pair $(s_j,v)$ and $S_2$ contains the pair $(v,t_j)$ by construction. Hence, $F_1$ contains an $s_j$-$v$ path and $F_2$ contains a $v$-$t_j$ path, and $F$ contains the union of these two paths and thus a $s_j$-$t_j$ path. Therefore, $F$ is indeed a solution to $(G,S)$ and $\sft(G,S) = \sft(G_1,S_1) + \sft(G_2,S_2)$.
\end{proof}

\noindent
The \emph{contraction} of an edge $e = (u,v)$ in a graph~$G$ replaces $u$ and $v$ by a new vertex~$w$ that is adjacent to all former neighbours of $u$ and $v$ in $G$. 
 A \emph{$2$-path} in~$G$ is a path whose internal vertices have degree exactly~$2$ in~$G$. A $2$-path is \emph{maximal} if both its ends have degree not equal to~$2$ in $G$ (so they have degree~$1$ or at least~$3$).
 In particular, the ends may have degree~$1$ or at least~$3$ in $G$.
 
 For a $2$-path~$P$ in~$G$, the following branching lemma considers the case that: 1) the whole path $P$ is in the solution; and 2) parts of $P$ are in the solution. In the latter case, we only distinguish between the parts of $P$ in the solution that are incident on the ends of $P$ and the remaining `middle' part, and branch into relevant cases for each possible pair of edges splitting $P$ in this manner. The lemma shows how to compute these branches, and that they are sufficient to solve the original problem.

\begin{lemma}\label{l-path}
Let $(G,S)$ be an instance of {\sc Steiner Forest}. Let $P$ be a $2$-path in $G$. Let $G_1$ be obtained from $G$ by contracting all edges of $P$ and let $G_2$ be obtained from $G$ by removing all edges of $P$ and any resulting isolated vertices. Then we can compute in polynomial time set $S_1 \subseteq V(G_1) \times V(G_1)$ and integer $w_1$, and sets $S_2^{e,f} \subseteq V(G_2) \times V(G_2)$ and integers $w_2^{e,f}$ for $e,f \in E(P)$ such that $$\sft(G,S) = \min\{\sft(G_1,S_1) + w_1, \min_{e,f \in E(P)} \{\sft(G_2,S_2^{e,f}) + w_2^{e,f}\}\}.$$ 
\end{lemma}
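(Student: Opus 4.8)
The plan is to reduce the instance $(G,S)$ to a collection of smaller instances, one on $G_1$ (where the whole $2$-path $P$ is contracted, representing ``$P$ is wholly used by the solution'') and one on $G_2$ for each ordered pair $e,f\in E(P)$ (where $P$ has been deleted, representing ``only a prefix and a suffix of $P$, cut off at $e$ and $f$, are used''). Let the maximal $2$-path containing $P$ — or rather $P$ itself — have ends $x$ and $y$ and internal vertices $u_1,\dots,u_r$ of degree $2$, so $E(P)=\{e_0=xu_1,e_1=u_1u_2,\dots,e_r=u_ry\}$. The first step is to record the structural fact that any minimum Steiner forest $F$ intersects $E(P)$ in one of exactly two shapes: either all of $P$ is in $F$, or $E(F)\cap E(P)$ is the disjoint union of a (possibly empty) subpath of $P$ incident to $x$ and a (possibly empty) subpath incident to $y$, with at least one edge of $P$ missing in between. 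This is because the internal vertices of $P$ have degree $2$ in $G$, so in $F$ they have degree $\le 2$; a degree-$2$ internal vertex forces both its $P$-edges into $F$ only if one of them is in $F$, and a degree-$\le 1$ internal vertex is an endpoint of the ``used'' part. Hence the two prefixes/suffixes, and the branching over which edges $e,f$ are the first edges of $P$ \emph{not} in $F$ from each side, is exhaustive.

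Next I would define the reduced instances. For $G_1$: let $p$ be the vertex of $G_1$ obtained by contracting $P$ (it inherits the neighbours $N_G(x)\setminus V(P)$ and $N_G(y)\setminus V(P)$). Set $w_1 = |E(P)| = r+1$, the number of edges paid for the contracted path. For the terminal set $S_1$, replace in every pair of $S$ each occurrence of $x$, $y$, or any internal $u_i$ by $p$, and drop any pair that becomes $(p,p)$. The claim here is that $\sft(G_1,S_1)+w_1$ equals the minimum over Steiner forests of $G$ that contain all of $P$: given such an $F$, contracting $P$ in $F$ yields a Steiner forest of $G_1$ for $S_1$ using $|E(F)|-(r+1)$ edges (the $r+1$ edges of $P$ become nothing, and connectivity between the images of the old terminals is preserved since $x,y,u_i$ were all connected through $P$ inside $F$); conversely, given a Steiner forest $F_1$ of $(G_1,S_1)$, re-expanding $p$ into the path $P$ and attaching each edge of $F_1$ incident to $p$ to whichever of $x$ or $y$ it originally came from gives a Steiner forest of $(G,S)$ containing $P$ with $|E(F_1)|+r+1$ edges — here the fact that $x,y$ sit at the two ends of $P$, which is connected, is what makes re-attachment harmless.

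For $G_2$ and a pair $e=e_i$, $f=e_j$ with (say) $i\le j$: in $G_2$ the path $P$ is gone, leaving (copies of) $x$, $y$ and the degree-$2$ vertices $u_1,\dots,u_r$ as isolated-from-$P$ vertices — but actually $u_1,\dots,u_r$ become isolated in $G_2$ and are removed, while $x$ and $y$ survive with their other edges. Interpret $e_i$ as the first missing edge walking from $x$ and $e_j$ as the first missing edge walking from $y$; then the $x$-side prefix is $x u_1\cdots u_i$ (an $x$-to-$u_i$ path, $i$ edges) and the $y$-side suffix is $u_{j+1}\cdots u_r y$, etc. (with the degenerate cases $i=-1$, i.e. $e=e_0$, meaning no $x$-side edge, handled uniformly, and similarly for $f$; and the case $i > j$ or overlap simply never arises for a valid shape, so those branches can be set to $+\infty$ or equivalently just skipped). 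Since the removed vertices $u_1,\dots,u_i$ and $u_{j+1},\dots,u_r$ do not exist in $G_2$, I would instead route terminal pairs as follows: $w_2^{e,f}$ counts the edges of the two surviving stubs, and $S_2^{e,f}$ is obtained from $S$ by: replacing each terminal among $\{x,u_1,\dots,u_i\}$ by $x$; replacing each terminal among $\{u_{j+1},\dots,u_r,y\}$ by $y$; and declaring \emph{infeasible} (e.g. output $+\infty$ for this branch) any instance in which some terminal is an ``inner'' vertex $u_{i+1},\dots,u_j$ that has been cut off from both sides — no Steiner forest of this shape can serve it. The verification mirrors the $G_1$ case: a Steiner forest of $G$ of the prescribed shape restricts to a Steiner forest of $G_2$ plus the two stubs, and vice versa the two stubs are attached only at $x$ and $y$, whose connectivity in $G_2$'s solution is all that is needed.

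The main obstacle, and where the bookkeeping has to be done carefully rather than routinely, is the terminal-rerouting and the degenerate endpoint cases: one must check that (i) when $x$ itself (or $y$) is a terminal it is correctly identified with the stub, (ii) pairs both of whose members fall on the same stub collapse to a trivially satisfied pair and can be dropped, (iii) pairs with one member on a stub and the other elsewhere are correctly rerouted to $x$ or $y$, and (iv) the ``stranded inner terminal'' situation genuinely kills only branches that cannot occur in an optimal solution of that shape — which is exactly where the degree-$2$ structural observation from the first step is used again. Once those cases are pinned down, the displayed equation follows by taking, over the (constantly many, namely $O(|E(P)|^2)$) branches, the minimum; each reduced instance is computed in polynomial time, and $|E(P)|\le |E(G)|$, so the whole reduction is polynomial.
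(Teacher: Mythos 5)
Your structural claim at the start of the proof is wrong, and the error propagates through the rest of the argument. You assert that $E(F)\cap E(P)$ is either all of $E(P)$ or the disjoint union of a prefix at $x$ and a suffix at $y$, justifying this via the degree-$2$ constraint on internal vertices. But the degree constraint only forces each connected piece of $E(F)\cap E(P)$ to be a subpath of $P$; it does not bound the number of pieces by two. In particular, the optimum may contain one or more \emph{interior} segments of $P$ that touch neither end. Such an interior segment forms its own connected component of $F$ (its endpoints, being internal to $P$, have no edges outside $P$), and it can legitimately serve a terminal pair both of whose members lie on that segment.

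Concretely, take $G=P=x\,u_1\,u_2\,u_3\,u_4\,y$ and $S=\{(u_2,u_3)\}$. The optimum is the single edge $u_2u_3$, cost $1$. In your scheme, the $G_1$ branch costs $|E(P)|=5$, and in every $(e,f)$ branch where $u_2,u_3$ are not both on a stub, you declare the branch infeasible because a terminal is ``stranded'' among the inner vertices. The only branches in which neither $u_2$ nor $u_3$ is stranded require $u_2$ and $u_3$ to be rerouted to $x$ and $y$, which must then be connected in $G_2$ --- but $G_2$ has no edges. So your minimum over branches is $5$, not $1$. The correct remedy is what the paper does: for each $(e,f)$ it extracts the middle subpath $G^{e,f}$, keeps the terminal pairs entirely inside it, and adds $\sft(G^{e,f},S^{e,f})$ (a polynomial subproblem, since $G^{e,f}$ is a path) to $w_2^{e,f}$; a branch is declared $+\infty$ only when some terminal pair has exactly one member in $V(G^{e,f})$. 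Your rule of killing any branch where \emph{some} terminal lies in the middle is strictly more aggressive and, as the example shows, eliminates the branch that realizes the optimum. The rest of your reduction (the $G_1$ computation, the stub accounting, the rerouting of terminals on the stubs to $x$ or $y$, and the feasibility check for pairs split across the middle boundary) matches the paper, so the fix is local: replace the ``stranded implies infeasible'' rule with ``one endpoint stranded implies infeasible; both endpoints stranded contributes a path Steiner Forest subproblem to $w_2^{e,f}$.''
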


\begin{proof}
We first show how to compute $S_1,S_2$ and $w_1,w_2$. Let $u,v$ be the ends of $P$ and let $x$ be the super-vertex in $G_1$ that is obtained after the contraction of $P$. To obtain $S_1$, start with $S_1 = S$ and remove any terminal pair $(s,t) \in S_1$ for which $s,t \in V(P)$. Note that $V(P)$ includes the ends of $P$. Then, for any terminal $(s,t) \in S_1$ for which $s \in V(P)$ and $t \not\in V(P)$, replace $s$ by $x$. Finally, let $w_1 = |E(P)|$.

Now let $e,f \in E(P)$. We explicitly allow $e=f$. Without loss of generality, $e$ is closer (on $P$) to $u$ than $f$ or $e=f$. Let $u'$ be the endpoint of $e$ closer (on $P$) to $u$ and let $v'$ be the endpoint of $f$ closer (on $P$) to $v$. Note that possibly $u=u'$ or $v=v'$. To compute $w_2^{e,f}$, we proceed as follows. We will compute a graph $G^{e,f}$ that will be a subpath of $P$. If $e=f$, let $G^{e,f}$ be the empty graph. Otherwise, let $G^{e,f}$ be the subpath of $P$ starting at the endpoint of $e$ that is not $u'$ and ending at the endpoint of $f$ that is not $v'$. Let $P^{e,f}$ and $Q^{e,f}$ be the two subpaths of $P-V(G^{e,f})$. Note that these subpaths both exclude $e$ and $f$, as does $G^{e,f}$. If there exists a terminal pair $(s,t) \in S$ for which $s \in V(G^{e,f})$ and $t \not\in V(G^{e,f})$, then set $S^{e,f} = \emptyset$ and $w_2^{e,f} = \infty$. Otherwise, let $S^{e,f}$ be the set of terminal pairs $(s,t) \in S$ for which $s,t \in V(G^{e,f})$ and let $w_2^{e,f} = \sft(G^{e,f},S^{e,f}) + |E(P^{e,f})| + |E(Q^{e,f})|$. Since $G^{e,f}$ is a path, $\sft(G^{e,f},S^{e,f})$ can be computed in polynomial time via Theorem~\ref{t-tw}. Finally, to obtain $S_2^{e,f}$, start with $S_2^{e,f} = S$ and remove any terminal pair $(s,t) \in S_2^{e,f}$ for which one of $s,t \in V(G^{e,f})$. Then, for any terminal $(s,t) \in S_2^{e,f}$ for which $s \in V(P^{e,f}) \cup V(Q^{e,f})$, replace $s$ by $u$ if $s \in V(P^{e,f})$ and by $v$ otherwise. Then do the same with respect to $t$.

We now prove the lemma statement for the constructed $S_1,S_2$ and $w_1,w_2$. Let $F$ be a minimum Steiner forest for $(G,S)$. Suppose that $E(P) \subseteq E(F)$. Then let $F' = ((V(F) - V(P)) \cup \{x\}, E(F) \setminus E(P))$. Clearly, $F'$ is a Steiner forest for $(G_1,S_1)$: edge contractions in a forest cannot create a cycle and any two terminals connected via $P$ are now connected via $x$. Hence, $\sft(G,S) \geq \sft(G_1,S_1) + w_1$. Suppose that $E(P) \not\subseteq E(F)$. Let $e'$ be the edge of $P$ that is not in $F$ and is closest (on $P$) to $u$. Let $f'$ be the edge of $P$ that is not in $F$ and is closest (on $P$) to $v$. Note that $e',f'$ are properly defined and possibly $e'=f'$. By definition, $E(P^{e',f'}) \subseteq E(F)$ and $E(Q^{e',f'}) \subseteq E(F)$. Moreover, we can assume that $|E(F) - E(P^{e',f'}) - E(Q^{e',f'})| = \sft(G^{e,f},S^{e,f})$, or we could replace $E(F) - E(P^{e',f'}) - E(Q^{e',f'})$ by any minimum Steiner forest for $(G^{e,f},S^{e,f})$ to obtain a smaller Steiner forest for $(G,S)$, a contradiction. It follows that $\sft(G,S) \geq \sft(G_2^{e,f},S_2^{e,f}) + w_2^{e,f}$.

Conversely, suppose that the minimum is achieved by $\sft(G_1,S_1)+w_1$ and let $F_1$ be a minimum Steiner forest for $(G_1,S_1)$. Let $F = (F_1 - x) \cup P = ((V(F_1)\setminus\{x\}) \cup V(P), E(F_1) \cup E(P))$; we only remove $x$ if $x \in V(F_1)$, which is not necessarily the case. Observe that $F$ is indeed a forest. Moreover, $F$ is a Steiner forest for $(G,S)$, because any terminal pair with both terminals in $P$ is satisfied by $P$, any terminal pair with exactly one terminal in $P$ is satisfied due to $x$ being the replacement of the terminal by $x$ in the construction of $S_1$, and for any other terminal pair, any path between them that uses $x$ can be expanded using $P$. As $F_1$ and $P$ are edge-disjoint, $\sft(G,S) \leq \sft(G_1,S_1)+w_1$.

Suppose that the minimum is achieved by $e',f' \in E(P)$ and $\sft(G_2^{e',f'},S_2^{e',f'}) + w_2^{e',f'}$ and let $F_2$ be a minimum Steiner forest for $(G_2^{e',f'},S_2^{e',f'})$. Let $F'$ be a minimum Steiner forest for $G^{e',f'}$, which can be found in polynomial time via Theorem~\ref{t-tw}. Let $F = F_2 \cup F' \cup P^{e',f'} \cup Q^{e',f'} = (V(F_2) \cup V(F') \cup V(P^{e',f'}) \cup V(Q^{e',f'}), E(F_2) \cup E(F') \cup E(P^{e',f'}) \cup E(Q^{e',f'}))$. Observe that $F$ is indeed a forest. Moreover, $F$ is a Steiner forest for $(G,S)$, because any terminal pair with both terminals in $G^{e',f'}$ is satisfied by $F'$, any terminal pair with both terminals in $P^{e',f'}$ ($Q^{e',f'}$) is satisfied by $P^{e',f'}$ ($Q^{e',f'}$), any terminal pair with exactly one terminal in $P^{e',f'}$ ($Q^{e',f'}$) is satisfied by $u$ ($v$) being the replacement terminal and $P^{e',f'} \cup F_2$ ($P^{e',f'} \cup F_2$), any terminal pair with exactly one terminal in $P^{e',f'}$ and exactly one in $Q^{e',f'}$ is satisfied by the combination of both, and any other terminal pair is satisfied by $F_2$. As $F_2$, $F'$, $P^{e',f'}$, and $Q^{e',f'}$ are edge-disjoint, $\sft(G,S) \leq \sft(G_2,S_2^{e',f'}) + w_2^{e',f'}$.

From the above, $\sft(G,S) = \min\{\sft(G_1,S_1) + w_1, \min_{e,f \in E(P)} \{\sft(G_2,S_2^{e,f}) + w_2^{e,f}\}\}$.
\end{proof}

\noindent
Using the above lemma, we can obtain the following result.

\begin{lemma}\label{l-degree}
{\sc Steiner Forest} is polynomial-time solvable for graphs with a bounded number of maximal $2$-paths between vertices of degree more than~$2$.
\end{lemma}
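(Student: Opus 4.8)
The plan is to recurse on the maximal $2$-paths whose two ends both have degree more than~$2$ — of which there are only constantly many — removing one of them at each step via Lemma~\ref{l-path}, and to dispatch the base case with Theorem~\ref{t-tw}. Write $k$ for the bound on the number of such $2$-paths in the input.

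\emph{Base case.} First I would observe that if $G$ contains no maximal $2$-path between two vertices of degree more than~$2$, then every connected component of $G$ has at most one vertex of degree more than~$2$: if a component contained two such vertices $u\neq v$, then a shortest $u$--$v$ path in that component, extended maximally, would be a maximal $2$-path with both ends of degree at least~$3$. In a graph where every component has at most one vertex of degree more than~$2$, deleting that vertex from each component leaves a disjoint union of paths, so the graph has treewidth at most~$2$; hence $\sft(G,S)$ is computed directly via Theorem~\ref{t-tw}. This also covers path- and cycle-components and the infeasible case where a terminal pair is split over two components.

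\emph{Recursive step.} Otherwise, pick any maximal $2$-path $P$ whose ends $u,v$ both have degree more than~$2$, apply Lemma~\ref{l-path} to $(G,S)$ and $P$, recursively compute $\sft(G_1,S_1)$ and $\sft(G_2,S_2^{e,f})$ for all $e,f\in E(P)$, and return the minimum prescribed by Lemma~\ref{l-path}; the auxiliary quantities $\sft(G^{e,f},S^{e,f})$ occurring inside that lemma are again obtained from Theorem~\ref{t-tw}, since each $G^{e,f}$ is a path. Correctness is immediate from Lemma~\ref{l-path}. It is also clear that $G_1$ and $G_2$ have a bounded number of maximal $2$-paths between degree-more-than-$2$ vertices, so the recursion stays within the class.

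\emph{Running time and the main obstacle.} The step I expect to require the most care is proving that each branch strictly decreases the number of maximal $2$-paths running between two vertices of degree more than~$2$. Contracting $P$ replaces $u,v$ by a single vertex of degree $\deg(u)+\deg(v)-2\geq 4$ and alters no other degree, so $P$ vanishes and $2$-paths formerly ending at $u$ or $v$ now end at the merged vertex — the count drops by exactly one. Deleting the edges of $P$ only lowers degrees: $P$ vanishes, and if an end drops to degree~$2$ its remaining incident $2$-paths either merge into one or become pendant, so the count drops by at least one; crucially no vertex ever gains degree, and one can check that every maximal $2$-path present after the reduction is either a survivor of one in $G$ or a concatenation of two such, so no brand-new $2$-path between degree-more-than-$2$ vertices is created. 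Consequently the recursion has depth at most $k$, while the branching factor is at most $1+|E(P)|^2=n^{O(1)}$; the recursion tree has $n^{O(k)}$ nodes, each performing polynomial work (Lemma~\ref{l-path} and the treewidth-$2$ subroutine run in polynomial time), so for fixed~$k$ the whole algorithm runs in polynomial time.
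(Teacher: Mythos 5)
Your proof is correct and takes essentially the same approach as the paper: branch via Lemma~\ref{l-path} on a maximal $2$-path between two vertices of degree more than~$2$, argue that each branch decreases the number of such paths by at least one, and dispatch the base case via Theorem~\ref{t-tw}, yielding $n^{O(k)}$ time. The paper additionally invokes Lemma~\ref{l-2con} upfront to assume there are no degree-$1$ vertices and merely asserts the count decrease, whereas you argue it explicitly and phrase the base case directly as a treewidth-$2$ bound; these are cosmetic differences.
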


\begin{proof}
Let $(G_0,S_0)$ be an instance of {\sc Steiner Forest} where $G_0$ has $k_0$ maximal $2$-paths between vertices of degree more than~$2$. By Lemma~\ref{l-2con}, we may assume that $G_0$ has no vertices of degree~$1$. This reduction does not increase the number of vertices of degree more than~$2$ nor the number of $2$-paths between them.

We apply the following branching algorithm on an instance $(G,S)$ of {\sc Steiner Forest}. Let $P$ be a maximal $2$-path in $G$ of which both ends have degree more than~$2$. Apply the construction of Lemma~\ref{l-path} to $P$. We compute $\sft(G_1,S_1)$ and $w_1$ and for all $e,f \in E(P)$, we compute $\sft(G_2^{e,f})$ and $w_2^{e,f}$. Then we return $\min\{\sft(G_1,S_1) + w_1, \min_{e,f \in E(P)} \{\sft(G_2,S_2^{e,f}) + w_2^{e,f}\}\}$. If $G$ has no such path $P$, then $G$ is a forest of subdivided stars and we can find a minimum Steiner forest using Theorem~\ref{t-tw}.

The correctness of the above algorithm is immediate from Lemma~\ref{l-path} and Theorem~\ref{t-tw}. It remains to analyze the running time of this algorithm when called on $(G_0,S_0)$. In the branching algorithm, both in $G_1$ and in $G_2^{\cdot,\cdot}$, the number $k$ of maximal $2$-paths between vertices of degree more than~$2$ decreases by $1$.  Since the base of the branching, the time spent to determine the branches, and the number of branches in each step are all polynomial, the running time of the algorithm is $n^{O(k)}$.
\end{proof}



\section{Vertex Covers and 2-Deletions Sets}\label{s-c}
In this section, we consider {\sc Steiner Forest} on graphs with a small deletion set. Recall that the $1$-deletion number equals the vertex cover number. Here is our first result:

\begin{theorem} \label{t-vc}
{\sc Steiner Forest} is \FPT\  by the vertex cover number of the input graph.
\end{theorem}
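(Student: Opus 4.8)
The plan is to fix a minimum vertex cover $X$ of $G$ with $|X| = k$, and to guess the structure of an optimal Steiner forest $F$ on $X$. Since $G - X$ is an independent set, every vertex of $G - X$ has all its neighbours in $X$, and in a minimum Steiner forest a non-terminal vertex of $G - X$ either has degree $0$ (and is discarded) or is a cut vertex joining two vertices of $X$ by a path of length $2$. So once we decide, for each connected component of $F$, which subset of $X$ it contains and how those vertices of $X$ are interconnected (directly, via an edge, or via a length-$2$ path through $G - X$), the remaining question is purely a covering/assignment question about the terminal pairs with at least one endpoint outside $X$. The total number of partitions of $X$ into labelled ``component blocks'' together with a spanning forest structure on each block is bounded by a function of $k$ only (roughly $k^{O(k)}$ times $2^{O(k^2)}$ choices for adjacencies/length-$2$ connections), so we can afford to branch over all of them.

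For each such guess, I first check consistency: each block must actually be realizable, i.e.\ a chosen ``edge'' between two vertices of $X$ must exist in $G$, and a chosen ``length-$2$ connection'' must be witnessable by some common neighbour in $G - X$; moreover the chosen per-block forest must indeed be a forest. Given a consistent guess, the cost splits into (i) the edges among $X$ used inside blocks, which is determined by the guess, plus (ii) for each terminal pair $(s,t)$ with an endpoint in $G - X$, the cost of hanging that endpoint off the block that its mate lies in. A vertex $s \in G - X$ that must be attached to a prescribed block $B$ costs exactly one edge (any edge from $s$ to a vertex of $B \cap X$, which must exist). If both $s,t \in G - X$ and they must lie in the same component, they either attach to the same block (cost $2$, provided $s$ and $t$ each have a neighbour in that block) or — importantly — $s$ and $t$ may share a single common neighbour $z \in G - X$, but that is exactly the length-$2$-connection case already folded into the block structure, so no new case arises. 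Non-terminal vertices of $G - X$ are never used except as the length-$2$ connectors already accounted for. Summing (i) and (ii) over the guess and minimizing over all guesses yields $\sft(G,S)$; correctness follows because an optimal $F$ induces exactly one such consistent guess whose accounted cost equals $|E(F)|$, and conversely every consistent guess yields a genuine Steiner forest of the accounted cost.

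The main subtlety — and the step I expect to be the real obstacle — is handling the terminal pairs correctly when \emph{many} terminals lie outside $X$ and several of them must end up in the same component: one has to be careful that the ``cheapest attachment'' choices for different terminals interact only through which block they join, and that reusing a length-$2$ connector vertex $z \in G-X$ for two different terminal pairs does not create a hidden cycle or an unaccounted saving. I would resolve this by making the guess record, for each unordered pair of vertices in $X$, whether they are joined in $F$ directly, via a length-$2$ path, or not at all (within a block), and separately record the block partition; then the attachment of each outside terminal is an independent minimum over its allowed block, and the only coupling is the fixed per-guess adjacency/connector pattern, whose cost is counted once. A short exchange argument shows any minimum $F$ can be normalized so that each outside non-terminal has degree $\le 2$ and each outside terminal has degree exactly $1$, which is what makes the enumeration over $X$-structures exhaustive. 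Finally, the running time is $k^{O(k)} \cdot 2^{O(k^2)} \cdot n^{O(1)}$, establishing fixed-parameter tractability in the vertex cover number.
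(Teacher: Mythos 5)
Your high-level plan --- branch over the interaction of an optimal forest with the vertex cover $X$, then reduce the vertices of $R=V\setminus X$ to an assignment problem --- is the same as the paper's, but the step you yourself flag as the ``main subtlety'' is exactly where the argument breaks, and the normalization claim you invoke to dismiss it is false.

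You assert (and lean on) the claim that a minimum Steiner forest can be normalized so that every non-terminal vertex $v \in R$ has degree at most~$2$, i.e.\ serves at worst as a length-$2$ connector between two vertices of $X$. This is not true. Since $R$ is independent, all neighbours of $v$ in the forest lie in $X$, and if $v$ has degree $d\geq 3$ then deleting $v$ disconnects its tree into $d$ pieces; there need not be any alternative. Take $X=\{a,b,c\}$ with no edges among them, a single non-terminal $v\in R$ adjacent to all three, and three pendant terminals $t_a,t_b,t_c\in R$, each adjacent only to the corresponding vertex of $X$, with terminal pairs forcing $t_a,t_b,t_c$ into one component. Any feasible forest uses $v$ with degree~$3$. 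Your enumeration records, for each unordered pair of $X$, whether they are joined ``directly / via a length-$2$ path / not at all''; a degree-$3$ connector is then either indistinguishable from two separate length-$2$ connectors (so the guess demands two distinct witnesses and incorrectly reports infeasibility when only $v$ exists), or, if you allow a single witness to serve two declared pairs, its cost is double-counted as $4$ edges rather than $3$. Either way the enumeration is not exhaustive and the accounting is wrong. The claimed ``short exchange argument'' does not exist.

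This is precisely the case the paper spends most of its effort on: after branching on the archipelago partition of the vertex cover, it runs a DP over ``schools'' of $R$-vertices, branches over Cayley-tree patterns $T_j$ that describe how an arbitrary number of connector vertices of arbitrary degree merge sub-archipelagos into archipelagos, and assigns concrete vertices of $R$ to the internal nodes of those trees via a minimum-weight bipartite perfect matching. That is the machinery that replaces your degree-$\le 2$ normalization, and it is unavoidable: without it, the pairwise ``adjacency pattern on $X$'' you record cannot encode a star centred at a vertex of $R$ with three or more leaves in $X$.
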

\begin{proof}
Consider an instance of {\sc Steiner Forest} with a given vertex cover. That is, a graph $G=(V,E)$, a set $S \subseteq V \times V$ of terminal pairs, and a vertex cover $C$ of $G$. We show that we can compute a Steiner forest for $(G,S)$ of minimum size in time $2^{O(|C| \log |C|)} n^{O(1)}$. Note that if the vertex cover is not given as part of the input, then we can compute one of size equal to the vertex cover number $k$ in $2^{O(k)} n^{O(1)}$ time (see e.g.~\cite{CKX06}). Let $R := V \setminus C$.

Throughout this description, we assume two things. First, no vertex of $C$ is a terminal vertex. Indeed, if $v \in C$ is a terminal vertex, then we create a vertex $v'$ only adjacent to~$v$ and move all terminals on $v$ to $v'$. Note that $C$ is still a vertex cover of the resulting graph, the resulting instance is feasible if and only if the original is, and the size of any solution is increased by exactly~$1$. Second, we assume that the set $S$ is \emph{transitive}, i.e., if $(u,v) \in S$ and $(v,w) \in S$, then $(u,w) \in S$ for any $u,v,w \in V$. This means that the terminals can be grouped, in what we call \emph{schools}. Any school must be part of the same connected component of any Steiner forest for $(G,S)$. 

Let $h'$ be the number of schools and let $\{L_1,\ldots,L_{h'}\}$ be the set of schools. Note that $L_1,\ldots,L_{h'}$ are disjoint. Let $L_{h'+1}, \ldots, L_{h}$ denote the sets of singleton vertices in $R - \bigcup_{i=1}^{h'} L_i$; that is, $|R - \bigcup_{i=1}^{h'-1} L_i| = h-h'$. By abuse of terminology, we also call each of the sets $L_{h'+1},\ldots,L_{h}$ a school. Then $L_1,\ldots,L_{h}$ form a partition of $R$.

We use the following notation. Let $F$ be any forest in $G$. Let $I = \{I_1,\ldots,I_\ell\}$ be the set of connected components of $F - R$; in other words, this is the set of connected components induced by the edges of $F$ between vertices of $C$. Note that some of these connected components may be singletons, that not all vertices of $C$ have to appear in a connected component, and that $\ell \leq |C|$. We call $I_1,\ldots,I_{\ell}$ the {\em islands} of $F$. Note that several islands of $F$ may belong to the same connected component of $F$. For any connected component of $F$, we call the set of islands that are part of that component its {\em\arch}. 
Let $p$ be the number of \archs. We use $A_i$ to denote the set of vertices in the $i$-th \arch and let $A = \{A_1,\ldots,A_p\}$ (so $A$ is a partition of $C \setminus I_0$).

In the remainder, let $F^*$ be an arbitrary Steiner forest for $(G,S)$ of minimum size. Let $I^* = \{I^*_1,\ldots,I^*_{\ell^*}\}$ be the islands of $F^*$ and let $I^*_0$ denote the set of vertices of $C$ not in any of $I^*_1,\ldots,I^*_{\ell^*}$. Let $A^* = \{A^*_1,\ldots,A^*_{p^*}\}$ denote the set of \archs of $F^*$.

We first branch on all $2^{O(|C| \log |C|)}$ partitions of $C$. Then one of them has to be equal to $A^* \cup \{I^*_0\}$. Consider one such branch, letting $I_0$ be one part and $A = \{A_1,\ldots,A_p\}$ the set of remaining parts. That is, $A$ is a partition of $C \setminus I_0$ (this effectively adds $p+1$ further branches). We now aim to find a minimum-size Steiner forest $F$ for $(G,S)$ such that $A$ are the \archs of $F$. For any such $F$, $|E(F)| \geq |E(F^*)|$. If the pair $A, I_0$ is equal to the pair $A^*,I^*_0$, then $F^*$ is a possible solution and thus $|E(F)| = |E(F^*)|$.

The crux to computing $F$ is to determine which schools connect to which \archs and in particular, whether they help in connecting some islands to form an \arch. To this end, we perform a dynamic program. We first introduce some notation. For any two partitions $X = \{X_1,\ldots,X_p\}$ and $Y = \{Y_1,\ldots,Y_q\}$ of the same set, we use the notation $Y \refines X$ to denote that for any $i \in \{1,\ldots,q\}$, $Y_i \subseteq X_j$ for some $j \in \{1,\ldots,p\}$. Note that this implies that $q \geq p$. 
We also say that $Y_{i_1},\ldots,Y_{i_r}$ \emph{build} $X_j$ for some $i_1,\ldots,i_r \in \{1,\ldots,q\}$ and some $j \in \{1,\ldots,p\}$ if $Y_{i_1} \cup \cdots \cup Y_{i_r} = X_j$ 
(so $Y_{i_1}, \ldots, Y_{i_r}$ is a partition of $X_j$).

Recall that $I_0$ and $A = \{A_1,\ldots,A_p\}$ are fixed by the branch we are in. For any partition $B = \{B_1,\ldots,B_q\}$ of $C \setminus I_0$ such that $B \refines A$ and any integer $s \in \{0,1,\ldots,h\}$, let $T[B,s]$ be the minimum size of any forest $F'$ for $G$, such that the \archs of $F'$ are exactly $B$ and for any $L_i$ with $i \leq s$, there exists an $A_j$ for $j \in\{1,\ldots,p\}$ such that any terminal vertex in $L_i$ has a neighbour in $A_j$ in $F'$. We are looking to compute $T[A,h]$. 
After $T[B,h']$ has been computed, all terminals have already been connected to parts of $A$. However, the vertices in $L_{h'+1}, \ldots, L_h$ can still be used to connect \archs of $B$ to form $A$. Finally, if the pair $A, I_0$ is equal to the pair $A^*,I^*_0$, then $T[A,h] = |E(F^*)|$.

We start with the base case: $T[B,0]$ for some partition $B = \{B_1,\ldots,B_q\}$ of $C \setminus I_0$ such that $B \refines A$. 
Let $T[B,0]$ be equal to the sum over all $i \in \{1,\ldots,q\}$ of the size of a spanning tree of $G[B_i]$. If $G[B_i]$ has no spanning tree, we take the size to be~$\infty$. This means that $B$ is the set of islands of the (partial) solution. Note that $T[B,0] = \infty$ if $G[B_i]$ is not connected for some $i$.

Let $s \in \{1,\ldots,h\}$. We describe the subproblem optimality property. If the pair $A, I_0$ is equal to the pair $A^*,I^*_0$ and $F'_*$ is an optimal solution to the subproblem indicated by $T[B,s]$, then let $L'$ be the set of edges of $F'_*$ incident on $L_s$. Then $F'_* - L'$ is a forest such that for any $L_i$ with $i \leq s-1$, there exists an $A_j$ for $j \in\{1,\ldots,p\}$ such that any vertex in $L_i$ has a neighbour in $A_j$ in $F'_* - L'$. 
Let $B'$ denote the \archs of $F'_* - L'$. Clearly, $B' \refines B$ and by the preceding, $F'_* - L'$ is a possible solution to $T[B',s-1]$. Moreover, the edges of $L'$ connect the \archs described by $B'$ to form $B$. Then any optimal solution to the subproblem indicated by $T[B',s-1]$ union the edges of $L'$ is again a solution for $T[B,s]$. The size of this union is no larger than $F'_*$, and thus optimal.

For the algorithm, we now compute $T[B,s]$ for some partition $B = \{B_1,\ldots,B_q\}$ of $C \setminus I_0$ such that $B \refines A$.  
We consider each partition $B' = \{B'_1,\ldots,B'_{q'}\}$ of $C \setminus I_0$ such that: 

\begin{itemize}
\item $B' \refines B$; 
\item there is at most one part of $A$, say $A_1$, such that more parts of $B'$ build $A_1$ than 
parts of $B$ build $A_1$ (if this exists, call 
$A_1$ a \emph{distilled part}); and
\item for all other members $A_j$ of $A$ an equal number of parts of $B'$ build $A_j$ as do parts of $B$. 
\end{itemize}

\noindent
We use $T[B',s-1]$ to determine $T[B,s]$. Given $B'$ and $B$, we compute how to connect parts of $B'$ that build parts of $B$ using vertices of the school $L_s$ and to connect the remaining terminal vertices of $L_s$ to vertices of a single part of $A$. We now formalize this idea. 

Consider first the case that no part of $A$ is distilled by $B'$ and $B$; that is, $B'=B$. If $s \geq h'+1$, then $T[B,s] = T[B',s-1] = T[B,s-1]$, because such a singleton school $L_s$ has no terminal vertices by definition. If $s \leq h'$, then all vertices of the school should connect to vertices of a single $A_j$ for some $j \in\{1,\ldots,p\}$. For any $j$, this is either not possible (because some vertex in $L_s$ has no edges to $A_j$) or requires exactly $|L_s|$ edges. If this is not possible for every $j \in \{1,\ldots,p\}$, then it is not possible to connect $L_s$ via any single part of $A$, and we stop the dynamic program and proceed with the next branch for $A$ and $I_0$. Otherwise, let $T[B,s] = |L_s| + T[B',s-1]$.

Consider the case that (without loss of generality) $A_1$ is the part of $A$ that is distilled by $B'$ and $B$.  
Suppose that $B_1,\ldots,B_z$ build $A_1$. For any $j \in \{1,\ldots,z\}$, suppose that $B'_{j_1},\ldots,B'_{j_{z'_j}}$ build $B_j$ (note that $z'_j$ depends on $j$, but we drop this extra subscript for clarity when possible). If $z'_j > 1$, then the school $L_s$ must connect $B'_{j_1},\ldots,B'_{j_{z'}}$. 
We aim to compute a forest $F''$ such that: $E(F'') \subseteq E(G) \cap (L_s \times A_1)$; $L_s \subseteq V(F'')$; if $B'$ are the \archs of a forest $F'''$ of $G - L_s$, then $B$ are the \archs of $F''' \cup F''$; 
there is no vertex $V(F'') \cap R$ incident in $F''$ on both a vertex of $B_j$ and of $B_{j'}$ for any $j,j' \in \{1,\ldots,z\}$ with $j \not= j'$. Let $F''_*$ be such an $F''$ of minimum size.

We now aim to `guess' the pattern of how to connect parts of $B'$ that build parts of $B$ using vertices of the school $L_s$. The intuition of this pattern is as follows (see Figure~\ref{f-pattern}). Suppose that an optimal solution $F'_*$ to $T[B,s]$ is indeed formed by the union of $F''_*$ and an optimal solution to $T[B',s-1]$. Suppose we remove a vertex $v \in L_s$ from $F'_*$ such that the number of \archs of the resulting forest is larger than of $F'_*$. Then some $B_j$ that is an \arch of $F'_*$ is no longer an \arch. Indeed, the sets $B'_{j_1},\ldots,B'_{j_{z'}}$ that build $B_j$ are now partitioned into a number of parts equal to the degree of $v$ in $F'_*$; each of these parts forms an \arch of the resulting forest. If we iteratively proceed, removing vertices of $L_s$ adjacent in $F'_*$ to $B_j$ such that the number of \archs increases, then we obtain a tree-like structure that is rooted. Each internal node of this tree (except possibly the root) has degree at least~$3$ and is associated with a partition of some subset of $\{B'_{j_1},\ldots,B'_{j_{z'}}\}$; this subset is a part of the partition of its parent. The leafs of the tree are $B'_{j_1},\ldots,B'_{j_{z'}}$. We seek this tree and its associated partitions.

We now turn the above intuition into a branching process (again, refer to Figure~\ref{f-pattern}). For any $j \in \{1,\ldots,z\}$, let $T_j$ be any rooted tree on $z'_j$ labeled leafs $t_1,\ldots,t_{j_{z'}}$, where the leafs are identified with the sets $B'_{j_1},\ldots,B'_{j_{z'}}$, such that all internal vertices (except possibly the root) have degree at least~$3$. For any internal vertex $t$ of $T_j$, let $Y_j^t$ be the set of, for each child $t'$ of $t$, the subset of $\{B'_{j_1},\ldots,B'_{j_{z'}}\}$ for which the corresponding leafs are descendants of $t'$ in $T_j$ (here, $t'$ is a descendant of itself). Observe that this set is a partition of a subset of $\{B'_{j_1},\ldots,B'_{j_{z'}}\}$. Let $Y_j = \bigcup_t \{Y_j^t\}$. We branch over all possible $Y_j$ for all $j \in \{1,\ldots,z\}$. 
Note that an optimal such set $Y_j^*$ and tree $T_j^*$ may be obtained by the iterative process described above. Using Cayley's formula~\cite{B60,C89}, we note that there are $2^{O(|C| \log |C|)}$ branches.

\begin{figure}[t!]
\begin{center}
\includegraphics[scale=0.7]{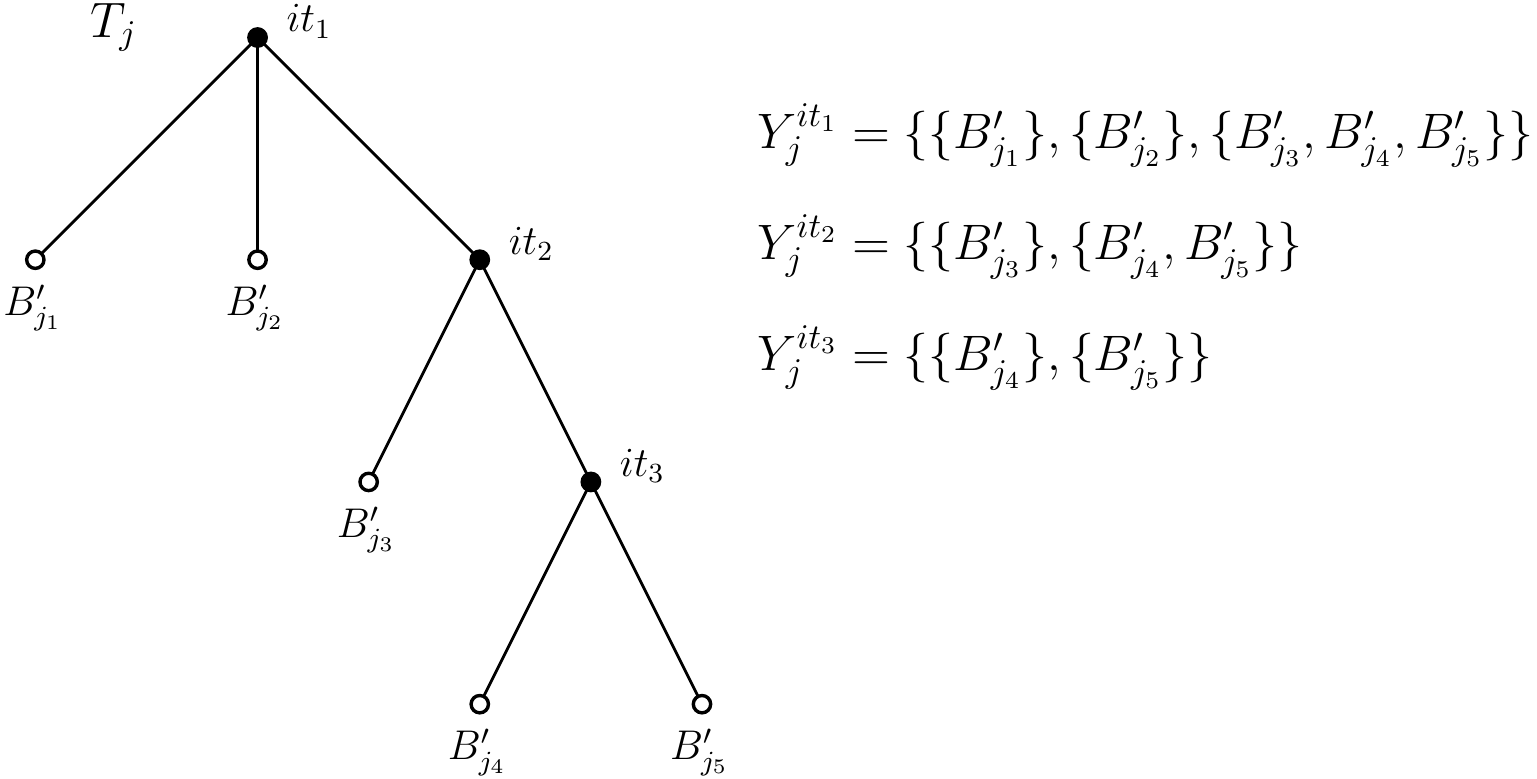}
\end{center}
\caption{A possible pattern $T_j$ where $z'_j = 5$. The tree has internal vertices $it_1, it_2, it_3$, and the associated $B'_{j}$ are listed at the leafs. The sets $Y_j^{t}$ are listed on the right, for all three of the internal tree vertices. Each internal vertex $it$ corresponds to a vertex of $L_s$ that connects the sets $B'_j$ in $Y^{it}_j$. For example, $it_2$ corresponds to a vertex of $L_s$ that has degree~$2$ in the solution: one edge to a vertex in $B'_{j_3}$ and one edge to a vertex in $B'_{j_4}$ or in $B'_{j_5}$. The perfect matching algorithm finds an optimal assignment of vertices in $L_s$ to internal tree vertices of $T_j$, such that the pattern describes the connection structure between the sets $B'_j$.}\label{f-pattern}
\end{figure}

We define some helpful variables. 
Let $\Gamma$ be the set of indices $j \in \{1,\ldots,z\}$ for which $z'_j > 1$. 
Let $\beta = \sum_{j \in \Gamma} |Y_j|$; this is the number of distinct vertices of $L_s$ that are needed to connect the parts of $B'$ that build parts of $B$ according to the pattern prescribed by the $Y_j$. If $\beta$ is larger than $|L_s|$, then proceed to the next branch, as then there are not sufficient vertices in $L_s$ to do this. 
Let $\sigma$ be a bijection between $\{1,\ldots,\beta\}$ and the $\bigcup_{j \in \Gamma} Y_j$ so that we may speak of $Y^{\sigma(i)}$ to refer to the members of the $Y_j$. That is, each $Y^{\sigma(i)}$ corresponds to $Y_j^t$ for some $j$ and some internal vertex $t$ of $T_j$; in particular, it is a partition of a subset of $\{B'_{j_1},\ldots,B'_{j_{z'}}\}$.

To create the assignment of vertices of $L_s$ to the pattern, we construct an instance of minimum-weight perfect matching on an auxiliary bipartite graph $H$. Let $\alpha = |L_s|$. Create vertices $u_1,\ldots,u_\alpha$ (we identify these with the vertices in $L_s$); these form one side of the bipartition. Add vertices $v_1,\ldots,v_{\alpha-\beta}$ and vertices $x_1,\ldots,x_\beta$; these form the other side of the bipartition. Consider any $i \in \{1,\ldots,\beta\}$. For any $u_k$ with $k \in \{1,\ldots,\alpha\}$ such that $u_k$ is adjacent in $G$ to a vertex in each of the sets $\{B'_{i_1},\ldots,B'_{i_l}\}$ that build $Y^{\sigma(i)}$, add an edge to $H$ between $u_k$ and $x_i$ of weight equal to $|Y^{\sigma(i)}|$. This represents the cost of using $u_k$ to connect the parts of the partition $Y^{\sigma(i)}$, thus helping to build the \archs. 
Finally, for each $k \in \{1,\ldots,\alpha\}$ and each $i \in \{1,\ldots,\alpha-\beta\}$, if $u_k$ is adjacent in $G$ to a vertex of $A_1$, then 
add an edge between $u_k$ and $v_i$ to $H$ of weight equal to either $1$ if $s \leq h'$ and $0$ otherwise. 
This represents the cost of directly connecting $u_k$ to $A_1$.

Now find a minimum-weight perfect matching in $H$. This corresponds to a forest $F'''$ that connects each of the parts of the $Y_j$ for each $j \in \Gamma$ in a manner of minimum size. Hence, $F'''$ is a candidate for a solution $F''$. We return the best solution we find over all branches. Since we encounter $Y_j^*$ as a candidate branch, we find a solution of size at most the size of $F''_*$. Following all the previous arguments and the description of the algorithm, we return a minimum Steiner forest for $(G,S)$.

To show that the running time of our algorithm is $2^{O(|C| \log |C|)} n^{O(1)}$, note that there are $2^{O(|C| \log |C|)}$ candidates for $I_0$ and $A$ and $2^{O(|C| \log |C|)}$ entries for the dynamic program. For each entry, we consider $2^{O(|C| \log |C|)}$ candidates for $B'$ and branch on $2^{O(|C| \log |C|)}$ candidates for $Y_j$. Finding a minimum-weight perfect matching takes polynomial time in the size of the instance~\cite{EK72}, which is $O(|L_s|^2)$. 
Finally, note that $h \leq n$.
\end{proof}

\begin{remark}
The \FPT\ algorithm extends straightforwardly to the setting with edge weights.
\end{remark}

\medskip
\noindent
We also need the next result, which shows the polynomial part of Theorem~\ref{t-2d2}.

\begin{lemma}\label{l-2d2}
{\sc Steiner Forest} is polynomial-time solvable for graphs with a $2$-deletion set of size at most~$2$.
\end{lemma}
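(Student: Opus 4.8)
The plan is to exploit the fact that the graph $G$ has a $2$-deletion set $T=\{a,b\}$ of size two, so that $G-T$ is a disjoint union of components each of size at most~$2$; that is, $G-T$ consists of isolated vertices and isolated edges, each of whose vertices is adjacent only to $a$, to $b$, or to both. First I would preprocess the instance: by Lemma~\ref{l-2con} we may restrict to the $2$-connected case, and as in the proof of Theorem~\ref{t-vc} we may assume $a$ and $b$ are not terminal vertices (attach pendant copies) and that the terminal relation is transitive, grouping terminals into \emph{schools}. The crucial structural observation is that there are only a bounded number of \emph{types} of component of $G-T$: an isolated vertex is of type $\{a\}$, $\{b\}$, or $\{a,b\}$ according to its neighbourhood in $T$, and an isolated edge $xy$ is classified by the (unordered) pair of neighbourhoods $(N(x)\cap T, N(y)\cap T)$, giving a constant number of edge-types as well. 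Within a school, though, the individual vertices matter, so I would track, for each school, how many of its vertices lie in each type-class.

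Next I would describe the global structure of any minimum Steiner forest $F$. Since removing $a$ and $b$ leaves only components of size $\le 2$, the connectivity pattern of $F$ is entirely governed by $a$, $b$, and the two vertices of each size-$2$ component: a connected component of $F$ either (i) avoids both $a$ and $b$, in which case it is a single vertex or a single edge of $G-T$ and hence can only satisfy a school contained entirely in that one component — which after preprocessing means it is irrelevant except possibly as a trivial school; (ii) contains exactly one of $a,b$, say $a$, in which case it is a ``star-like'' tree consisting of $a$ together with some pendant vertices/edges attached at $a$; or (iii) contains both $a$ and $b$. There is at most one component of type (iii) containing both $a$ and $b$ if they end up connected, and at most two components of type (ii) otherwise. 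I would therefore branch on the coarse pattern: whether $a$ and $b$ are in the same component of $F$, and if so, whether the $a$--$b$ connection in $F$ is realized through a direct edge $ab$ (if present), through a shared size-$2$ component adjacent to both, or through a pendant vertex adjacent to both; this is a constant number of cases.

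Within each branch, the problem reduces to an assignment/counting problem: decide, for each school, whether it is served by the $a$-component, the $b$-component, or (in the ``connected'' branch) the combined $ab$-component, and then select the cheapest set of edges from $G-T$ to $T$ that (a) connects every vertex of every school to its assigned hub(s) and (b) realizes the chosen $a$--$b$ connection. Because the only relevant information about a non-terminal vertex of $G-T$ is its type-class, and the only relevant information about a terminal vertex is its type-class \emph{and} which school it belongs to, the optimal edge-selection within a branch is a matter of, for each component of $G-T$, choosing how to attach it (to $a$ only, to $b$ only, to both, or not at all, subject to its type allowing it and subject to its terminal vertices' school assignments). These choices decouple across components once the school-to-hub assignment is fixed, so each component contributes an independently computable minimum cost; summing gives the branch cost, and we take the minimum over the constantly many branches and the (at most $3^{h'}$, but $h'$ could be large) school assignments. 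The remaining subtlety is that $h'$, the number of schools, is not bounded, so I cannot brute-force all $3^{h'}$ assignments. Instead I observe that once the coarse $a$--$b$ pattern is fixed, a school is forced to a particular hub whenever it contains a vertex that can only reach $a$ (type $\{a\}$) — it must be served by the $a$-side — and symmetrically for $b$; a school all of whose vertices are type-$\{a,b\}$ is free, but serving such a free school costs the same from either side and never affects feasibility of the $a$--$b$ connection, so it can be assigned greedily. Thus the assignment is either forced or cost-neutral, eliminating the exponential blow-up.

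\textbf{Main obstacle.} The step I expect to be most delicate is the bookkeeping in the ``$a$ and $b$ connected'' branch: here a single size-$2$ component adjacent to both $a$ and $b$ can simultaneously serve as the $a$--$b$ bridge \emph{and} carry a school vertex, and one must be careful not to double-count its edges or to wrongly forbid a school from using a bridge component. Handling all the ways a school can straddle the $a$-side, the $b$-side, and the bridge — while keeping the per-component costs genuinely independent — is where the case analysis is easy to get wrong, and it is where I would spend the most care, likely by formalizing a small finite ``menu'' of attachment options per component type together with the constraints each option imposes on the schools touching that component.
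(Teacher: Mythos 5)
Your plan diverges from the paper's proof and, as written, has a gap at the key step. The paper's argument for the disconnected case (your case where $a$ and $b$ land in different components of $F$) hinges on a short but decisive observation you do not make: any two-component solution using more than three non-terminal Steiner vertices is at least as expensive as some solution in which $u$ and $v$ are connected, so one may brute-force over all $O(n^3)$ choices of the non-terminal vertices in the solution, contract each onto $u$ or $v$, and then do a local normalization of the size-$2$ components (delete the redundant edge among $xy,ux,vy,uy,vx$), after which every remaining vertex is a terminal, the edge count is forced per component, and the instance is essentially decided. Your school-to-hub assignment is meant to play the same role, but the claim that a free school is ``cost-neutral'' and that costs ``decouple across components'' does not hold in general: the per-vertex type $N(x)\cap T$ ignores that a vertex in a size-$2$ component can reach the other hub through its partner at an extra edge (so a type-$\{a\}$ vertex is not forced to the $a$-side), the cost of serving a school from $a$ vs.\ $b$ can differ by routing through partners (so free schools are not cost-neutral), and when two terminals of \emph{different} schools share a size-$2$ component, the optimal attachment of that component depends jointly on both schools' hub assignments, so the choices do not decouple. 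You flag exactly this in your ``main obstacle'' paragraph, and that is indeed where the argument would need to be completed — for example by reducing the residual assignment to $2$-SAT or to a matching, which the paper sidesteps entirely via its non-terminal-vertex bound. Finally, in the connected branch you branch on the \emph{type} of bridge, but the actual choice of bridge path from $a$ to $b$ is $O(n^2)$-fold, not constant; the paper handles this by explicitly enumerating all such paths and contracting, which is simple because after contraction the instance has a $2$-deletion set of size one.

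So: same high-level dichotomy (are $a,b$ connected in the solution or not), but the paper finishes with a sharp structural bound and a clean normalization where you propose an assignment argument that, as stated, overclaims cost-neutrality and decoupling. To repair your route you would either need to prove the non-terminal-vertex bound yourself, or replace the greedy assignment with a genuine combinatorial optimization over the coupled per-component choices.
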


\begin{proof}
Let $G$ be an $n$-vertex graph that, together with a set $S$ of terminal pairs, is an instance of {\sc Steiner Forest}. If $G$ has a $2$-deletion set of size $1$, then this vertex forms a cut vertex and we can apply Lemma~\ref{l-2con} to reduce to case of graphs of size~$2$, which is trivial.  So assume that $G$ has a $2$-deletion set $C$ of size~$2$, say $C=\{u,v\}$. By Lemma~\ref{l-2con} we may assume that $G$ is $2$-connected.  If $u$ and $v$ are adjacent, then the problem is trivial.  Indeed, either the edge $uv$ is part of the solution and we may contract it to form a $2$-deletion set of size $1$, or the edge is not part of the solution and we may reduce to the case when $u$ and $v$ are not adjacent.  Hence, we assume that $u$ and $v$ are not adjacent. 

As $G$ is $2$-connected, there exists a path from $u$ to $v$ in $G$. As $C$ is a $2$-deletion set, every path from $u$ to $v$ has at most two inner vertices. We check every of the $O(n^2)$ paths from $u$ to $v$ (with at most two inner vertices). For each such a path $P$, it straightforward to find a minimum solution over all solutions that contain $P$ (as we can contract the path $P$ to a single vertex). We also compute a minimum solution over all solutions that contain $u$ but not~$v$, and a minimum solution over all solutions that contain $v$ but not $u$. This takes polynomial time as well.

It remains to find a minimum solution over all solutions with two connected components, one containing $u$ and the other one $v$, and to check this minimum solutions with the minimum solutions of the other types found above. As $C$ is a $2$-deletion set, any non-terminal vertex that is not adjacent to at least one of $u$, $v$ will not be used in a minimum solution. Hence, we remove such non-terminal vertices from $G$.  Moreover, we may restrict ourselves to solutions that contain at most three non-terminal vertices. Any solution with more than three non-terminal vertices has at least as many edges as a solution in which $u$ and $v$ are in the same connected component. Hence, we branch by considering all $O(n^3)$ options for the set of non-terminal vertices used in a minimum solution. We consider each of the $O(n^3)$ branches separately and as follows.

First we remove all non-terminal vertices that we did not guessed to be in the solution. For each of the at most three guessed non-terminal vertices we do as follows.  By construction, such a guessed non-terminal vertex $z$ is adjacent to $u$ or $v$, and we contract the edge $zu$ or $zv$, respectively. Afterwards, we apply Lemma~\ref{l-2con} again such that we may assume that the resulting instance, which we denote by $(G,S)$ again, is $2$-connected. Now every vertex of $V(G)\setminus \{u,v\}$ is a terminal vertex. Hence, every vertex of $V(G)\setminus \{u,v\}$ will be in the solution we are trying to construct.

For every connected component $D$ of $G-\{u,v\}$ we do as follows. As $C$ is a $2$-deletion set, $D$ consists of exactly two vertices $x$ and $y$. If $x$ and $y$ are both adjacent to both $u$ and $v$, then we may remove the edge $xy$ for the following reason. As $x$ and $y$ are terminal vertices, we need to connect $x$ to either $u$ or $v$, and we also need to connect $y$ to either $u$ or $v$. For doing this, we do not need to use the edge $xy$.

Now, suppose that one of $x,y$, say $x$, is adjacent to $u$ and $v$, whereas the other one, $y$, is adjacent to only one of $u$ and $v$, say to $u$. Again, as $x$ and $y$ are terminal vertices, we need to connect $x$ to either $u$ or $v$ and we also need to connect $y$ to either $u$ or $v$. If we use $xu$, then we must also use $yu$, and in that case we can replace $xu$ by $xy$. Hence, we may remove $xu$.

So, afterwards, we reduced the instance in polynomial time to a new instance, which we will also denote by $(G,S)$, with the following properties. Every connected component of $G-\{u,v\}$ has at most two vertices. By $2$-connectivity, for every connected component that contains exactly one vertex $z$ we have the edges $uz$ and $vz$. Moreover, for every connected component that contains exactly two vertices $x$ and $y$, we have the edges $ux$ and $vy$ but not $uy$ and not $vx$. 

As every vertex in $V(G)\setminus \{u,v\}$ is a terminal vertex, we need one edge in the solution for each singleton connected component of $G-\{u,v\}$ and two edges for each two-vertex connected component. First, suppose that one of $u$, $v$, say $u$, is not a terminal vertex. Then the solution that contains all vertices of $G-\{v\}$ is a minimum solution, so we can stop (note that we already found this solution before).

Now suppose that both $u$ and $v$ are terminal vertices. We discard the branch if $u$ and $v$ represent terminals of the same pair. Suppose this is not the case. If $u$ represents $s_i$, then we connect the terminal vertex that represents $t_i$ to $u$. This can only be done in one way: if the terminal vertex representing $t_i$ is not adjacent to $u$, it contains a unique neighbour adjacent to $u$. Afterwards, we remove the terminal pairs that we connected in this way from $G$. If it was not possible to connect some terminal pair, then we discard the branch. Otherwise, we apply Lemma~\ref{l-2con} again, such that we may assume that the resulting instance, which we denote by $(G,S)$ again, is $2$-connected. The other properties are maintained, and neither $u$ nor $v$ is a terminal vertex. Hence, we can take as solution the forest that has one connected component with vertex set $\{u\}$ and one connected component with vertex set $V(G)\setminus \{u\}$. 
\end{proof}

\noindent
For some of our results we need the following extension of Lemma~\ref{l-2d2}.

\begin{lemma}\label{l-extension}
{\sc Steiner Forest} is polynomial-time solvable for graphs $G$ with a set $X \subseteq E(G)$ of bounded size such that $G-X$ has a $2$-deletion set $C$ of size at most~$2$ and each end-point of every edge of $X$ is either an isolated vertex of $G-X$ or a vertex of $C$.
\end{lemma}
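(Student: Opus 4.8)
The plan is to reduce, by branching, to the case solved in Lemma~\ref{l-2d2}. Since $|X|$ is bounded, there are only constantly many subsets $X' \subseteq X$. We branch over all of them; in the branch for a fixed $X'$ we compute the minimum number of edges of a Steiner forest $F$ of $(G,S)$ with $E(F)\cap X = X'$, and we output the minimum value over all branches. (We may assume the pair $(X,C)$ is given; if not, as $|X|$ and $|C|$ are bounded, such a pair can be found by brute force in polynomial time.) Correctness of the branching is immediate: the global optimum $F^*$ is found in the branch $X' = E(F^*)\cap X$, while every branch produces a feasible (hence not too small) value.

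Fix a branch $X'$. First I would build an auxiliary instance $(G^\circ,S^\circ)$ by contracting all edges of $X'$ in $G$ and then deleting the edges of $X \setminus X'$ and any parallel edges created; equivalently, $G^\circ$ is obtained from $G-X$ by identifying, for every connected component $K$ of the graph $(V(G),X')$, all vertices of $K$ into a single vertex (trivial one-vertex components are left untouched). The set $S^\circ$ is obtained from $S$ by mapping each terminal to its image under this identification, discarding every pair whose two terminals coincide, and keeping the rest. The core claim is that the optimum of the branch equals $\sft(G^\circ,S^\circ)+|X'|$. For one direction, given $F$ with $E(F)\cap X = X'$, the subgraph $F' := (V(F),E(F)\setminus X') = (V(F), E(F)\cap E(G-X))$ has $|E(F')| = |E(F)|-|X'|$, and its image in $G^\circ$ is a subgraph of $G^\circ$ with no more edges that still connects every pair of $S^\circ$ (any $s$--$t$ path in $F=F'\cup X'$ projects to an $s^\circ$--$t^\circ$ walk in the image of $F'$, since the $X'$ edges collapse to loops); hence $\sft(G^\circ,S^\circ)\le |E(F)|-|X'|$. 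Conversely, a minimum Steiner forest $F^\circ$ of $(G^\circ,S^\circ)$ lifts: replace each of its edges by a preimage edge of $G-X$ and add all of $X'$. The resulting $F$ satisfies $E(F)\cap X = X'$ and $|E(F)| = |E(F^\circ)|+|X'|$, and it connects every pair of $S$ (pairs that collapsed in $S^\circ$ are connected through $X'$ inside their common component $K$; the others are connected by lifting a path in $F^\circ$, using $X'$ to hop between preimages of the same identified vertex). This gives the reverse inequality.

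Next I would verify that $G^\circ$ has a $2$-deletion set of size at most~$2$, so that $\sft(G^\circ,S^\circ)$ is computable in polynomial time via Lemma~\ref{l-2d2}. Take $C^\circ$ to be the image of $C$, so $|C^\circ|\le|C|\le 2$. This is exactly the step using the structural hypothesis: every vertex incident to an edge of $X'\subseteq X$ is either a vertex of $C$ or an isolated vertex of $G-X$, so every non-trivial component $K$ of $(V(G),X')$ consists only of such vertices. Since isolated vertices of $G-X$ carry no edges of $G-X$, identifying such a $K$ either merges $K$ into $C^\circ$ (if $K$ meets $C$) or produces a vertex of degree~$0$ in $G^\circ$ (if $K$ misses $C$); in either case the neighbourhood of any vertex outside $C^\circ$ is unchanged, and no component of $(G-X)-C$ of size~$2$ is touched. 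Hence each component of $G^\circ-C^\circ$ is either a component of $(G-X)-C$, of size at most~$2$ by hypothesis, or an isolated merged vertex, so $C^\circ$ is a $2$-deletion set of size at most~$2$ as required.

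Putting it together: for each of the constantly many branches $X'$, build $(G^\circ,S^\circ)$ in polynomial time, compute $\sft(G^\circ,S^\circ)$ via Lemma~\ref{l-2d2}, add $|X'|$, and return the minimum. The one place that needs care is the equivalence in the second paragraph — checking that the identification neither spuriously connects terminals nor miscounts forest size. The first point is precisely where the restriction on endpoints of $X$ is used: it makes each non-trivial component $K$ of $(V(G),X')$ ``self-contained'', carrying no edge of $G-X$ to the rest of the graph, so that collapsing $K$ only reflects connectivity that $X'$ itself forces; the second point is routine since $X'$ and $E(G-X)$ are disjoint edge sets.
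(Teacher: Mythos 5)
Your proposal is correct and follows the same overall strategy as the paper's proof: branch on the subset $X' = E(F)\cap X$ of $X$-edges used by the solution, argue that in each branch the reduced graph still has a $2$-deletion set of size at most~$2$, and invoke Lemma~\ref{l-2d2}. The only real difference is how the branches are realized. The paper applies Lemma~\ref{l-path} once per edge of $X$ (each edge viewed as a $2$-path of length one, so the lemma yields a ``contract'' branch and a ``delete'' branch per edge) and thereby inherits the correctness of the recursion from that lemma; you instead contract or delete the entire set $X'$ in a single step and verify the equivalence with $\sft(G^\circ,S^\circ)+|X'|$ directly. Your version is self-contained and essentially reproves the length-one special case of Lemma~\ref{l-path}, while the paper's reuses existing machinery. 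Your careful justification that the image $C^\circ$ of $C$ remains a $2$-deletion set of size at most~$2$ — resting on the hypothesis that every endpoint of an $X$-edge lies in $C$ or is isolated in $G-X$ — spells out a structural claim the paper asserts only in passing (``then $C$ is still a $2$-deletion set of size~$2$''), which is a welcome addition.
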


\begin{proof}
Let $(G,S)$ be an instance of {\sc Steiner Forest}. Let $X$ be a set that satisfies the conditions of the lemma. We apply the following branching algorithm recursively on the edges of $X$.  Let $x\in X$. We use $P$ to denote the $2$-path consisting of $x$. Apply the construction of Lemma~\ref{l-path} to $P$. We compute $\sft(G_1,S_1)$ and $w_1$ and for all $e,f \in E(P)$, we compute $\sft(G_2^{e,f})$ and $w_2^{e,f}$. We return $\min\{\sft(G_1,S_1) + w_1, \min_{e,f \in E(P)} \{\sft(G_2,S_2^{e,f}) + w_2^{e,f}\}\}$. In each branch, we have either contracted or deleted $x$. If every edge of $X$ has been branched on, then $C$ is still a $2$-deletion set of size~$2$ in $G$. Hence, we can apply Lemma~\ref{l-2d2} to $(G,S)$ respectively.

The correctness of the algorithm is immediate from Lemma~\ref{l-path} and Lemma~\ref{l-2d2}. For the running time, note that we branch $|X|$ times in a constant number of options. Since Lemma~\ref{l-2d2} gives an algorithm with polynomial running time, the entire algorithm runs in polynomial time.
\end{proof}

\section{Polynomial Cases}\label{s-poly}

We start with a general lemma, using Theorem~\ref{t-vc}.

\begin{lemma}\label{l-sp2}
For a graph $H$, if {\sc Steiner Forest} can be solved in polynomial time on the class of $H$-subgraph-free graphs, then {\sc Steiner Forest} can be solved in polynomial time on the class of $H+P_2$-subgraph-free graphs.
\end{lemma}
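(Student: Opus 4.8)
The plan is to dichotomize on whether the input graph actually contains $H$. Let $(G,S,k)$ be an instance of {\sc Steiner Forest} with $G$ being $H+P_2$-subgraph-free, and set $h = |V(H)|$, a constant. First I would try to find a copy of $H$ as a subgraph of $G$: enumerate all $O(n^{h})$ subsets $W \subseteq V(G)$ with $|W| = h$ and, for each, check in constant time whether $G[W]$ contains $H$ as a (not necessarily induced) subgraph. This runs in polynomial time since $h$ is fixed.

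If no such $W$ exists, then $G$ is itself $H$-subgraph-free, and I would simply invoke the assumed polynomial-time algorithm for {\sc Steiner Forest} on the class of $H$-subgraph-free graphs.

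Otherwise, fix a subset $W$ on which $G$ contains a copy of $H$. The key observation is that $W$ is a vertex cover of $G$ of size $h$: if $G - W$ contained an edge $xy$, then this edge, being vertex-disjoint from $W$, together with the copy of $H$ on $W$ would form a copy of $H+P_2$ in $G$, contradicting that $G$ is $H+P_2$-subgraph-free. Hence $G$ has vertex cover number at most $h$, and by Theorem~\ref{t-vc} {\sc Steiner Forest} on $(G,S,k)$ can be solved in time $2^{O(h\log h)} n^{O(1)} = n^{O(1)}$, since $h$ is a constant. Combining the two cases gives the claimed polynomial-time algorithm on $H+P_2$-subgraph-free graphs.

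The only step that requires any care is the first one, detecting a copy of $H$; but for a fixed graph $H$ this is entirely routine, as the brute-force enumeration above already suffices and no optimal subgraph-isomorphism machinery is needed. (If desired, one may first apply Lemma~\ref{l-2con} to restrict attention to connected $G$, but this is not necessary for the argument.) So the main conceptual content is simply the observation that a copy of $H$ inside an $H+P_2$-subgraph-free graph certifies a bounded vertex cover, which then hands the problem to the \FPT\ algorithm of Theorem~\ref{t-vc}.
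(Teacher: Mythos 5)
Your proof is correct and follows exactly the same route as the paper's: detect a copy of $H$, observe that in an $(H+P_2)$-subgraph-free graph the vertex set of any such copy must be a vertex cover (else a leftover edge yields an $H+P_2$), and then invoke the \FPT\ algorithm of Theorem~\ref{t-vc}. The only cosmetic difference is that you spell out the $O(n^{|V(H)|})$ brute-force subgraph detection step, which the paper leaves implicit.
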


\begin{proof}
Let $(G,S)$ be an instance of {\sc Steiner Forest} such that $G$ is $(H+P_2)$-subgraph-free. If $G$ is $H$-subgraph-free, then {\sc Steiner Forest} can be solved in polynomial time by assumption. Hence, we may assume that $G$ has a subgraph $H'$ isomorphic to $H$. Then the connected components of $G-V(H')$ are $P_2$-subgraph-free and thus have size~$1$. Hence, $H$ is a $1$-deletion set and we apply Theorem~\ref{t-vc}.
\end{proof}

\noindent
We now consider a number of graphs~$H$. The first one is the case $H=2K_{1,3}$.

\begin{lemma}\label{l-2k13}
{\sc Steiner Forest} is polynomial-time solvable for $2K_{1,3}$-subgraph-free graphs.
\end{lemma}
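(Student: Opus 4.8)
The plan is to reduce the $2K_{1,3}$-subgraph-free case to situations already handled, principally Theorem~\ref{t-vc} (the \FPT\ algorithm for vertex cover number) and the structural machinery of Section~\ref{s-obs}. By Lemma~\ref{l-2con} we may assume $G$ is $2$-connected. The key dichotomy is on whether $G$ contains a single $K_{1,3}$ at all: if $G$ is $K_{1,3}$-subgraph-free, then every vertex has degree at most~$2$, so $G$ is a path or a cycle, and {\sc Steiner Forest} is polynomial-time solvable by Theorem~\ref{t-tw} (treewidth at most~$2$). So assume $G$ contains a copy $H'$ of $K_{1,3}$. Since $G$ is $2K_{1,3}$-subgraph-free, $G - V(H')$ contains no $K_{1,3}$, hence $G - V(H')$ has maximum degree at most~$2$ and is a disjoint union of paths and cycles. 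This is the structural backbone of the argument.

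First I would exploit this backbone directly. The set $V(H')$ has only four vertices, but $G-V(H')$ being a union of paths and cycles is not by itself enough to bound the vertex cover number, so more care is needed. The natural next step is to look at the vertices of $G-V(H')$ of degree at least~$3$ \emph{in $G$}: each such vertex has at least one neighbour in $V(H')$ (since its degree in $G-V(H')$ is at most~$2$), and in fact a vertex $w$ of $G-V(H')$ that together with three of its neighbours forms a $K_{1,3}$ disjoint from $H'$ would contradict $2K_{1,3}$-subgraph-freeness; so any three neighbours of $w$ must meet $V(H')$. This forces every degree-$\geq 3$ vertex of $G$ outside $V(H')$ to have at least $(\deg_G(w)-2)$ neighbours in $V(H')$, and since $|V(H')|=4$ this bounds such vertices tightly. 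I would argue that there are only $O(1)$ vertices of degree more than~$2$ in all of $G$ (those in $V(H')$ plus those few in $G-V(H')$ that are forced to attach to $V(H')$). Consequently $G$ has only a bounded number of maximal $2$-paths between vertices of degree more than~$2$, and Lemma~\ref{l-degree} finishes the job.

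The main obstacle is controlling the interaction between $V(H')$ and the path/cycle structure of $G-V(H')$, specifically ruling out that $G-V(H')$ has unboundedly many components each attaching to $V(H')$ in a way that, combined with $V(H')$, creates high-degree vertices or forces a large vertex cover — and handling the case where a path component of $G-V(H')$ has an internal vertex that becomes degree~$3$ via an edge to $V(H')$. I would handle this by a careful case analysis: if a long path component $P$ of $G-V(H')$ has $\geq 3$ internal vertices each with a neighbour in $V(H')$, then since $|V(H')|=4$ two of these neighbours coincide in some $a\in V(H')$, and one can often extract a second $K_{1,3}$ (centred at $a$ or at an internal path vertex) disjoint from a suitably chosen $K_{1,3}$ inside $V(H')\cup P$, contradicting $2K_{1,3}$-subgraph-freeness; the residual configurations that survive this argument have only boundedly many attachment points, keeping the number of degree-$\geq 3$ vertices bounded. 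Once that bound is established, Lemma~\ref{l-degree} applies directly and we are done. Alternatively, if this analysis proves too delicate, a fallback is to show $G$ has bounded vertex cover number after deleting $O(1)$ edges incident to $V(H')$ and invoke Lemma~\ref{l-extension} together with Theorem~\ref{t-vc}.
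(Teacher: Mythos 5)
Your opening moves match the paper's: reduce to the $2$-connected case, dispatch $K_{1,3}$-subgraph-free $G$ as paths/cycles via Theorem~\ref{t-tw}, and observe that once a copy of $K_{1,3}$ is fixed, the rest of the graph is a disjoint union of paths and cycles. The paper even formalises the four vertices of a claw as an ``antares.'' But from there your argument has a genuine gap, and it is exactly where the paper's proof becomes technical.

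Your central claim---that there are only $O(1)$ vertices of degree more than~$2$ in all of~$G$---is false. Take $H'$ to be a claw with centre~$a$ and leaves $b_1,b_2,b_3$, and let $P=p_1\cdots p_n$ be a long path in $G-V(H')$ in which \emph{every} internal vertex $p_i$ is adjacent to~$a$ and to nothing else in $V(H')$. Then each $p_i$ has degree~$3$, so the number of degree-$\geq 3$ vertices is unbounded. Moreover the graph really is $2K_{1,3}$-subgraph-free: every claw in it either is centred at $a$, or is centred at some $p_i$ and has $a$ as a leaf, so any two claws share~$a$. This is not a pathology you can case-analyse away; it is the main configuration. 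Consequently the route through Lemma~\ref{l-degree} fails (and would fail even with a bounded count of high-degree vertices, since a single vertex of unbounded degree can be an endpoint of unboundedly many maximal $2$-paths), and the fallback via Lemma~\ref{l-extension} fails too, because after deleting a constant number of edges the components of $G-\{v\}$ are still long paths, so no bounded-size $2$-deletion set exists.

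The paper handles precisely this situation by a different mechanism. It splits on whether the maximum-degree vertex~$v$ has degree~$\leq 6$ or $>6$. In the first case the degrees of all high-degree vertices are bounded, so the number of maximal $2$-paths is genuinely constant and Lemma~\ref{l-degree} applies, as you propose. In the second case $G-A$ consists of at most~$15$ paths, only their ends see $v_1,v_2,v_3$, but internal path vertices may see~$v$; the paper then performs a single polynomial-size branching step guessing, for each of those paths, its first and last solution subpath and the (at most one) edge from each of those subpaths to~$v$, deletes all other edges from $v$ to those prefixes/suffixes, and argues that each $2$-connected component of the residual graph has either treewidth~$\leq 2$ or maximum degree~$\leq 33$. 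That branching-then-degree-bound step is the heart of the proof and is absent from your sketch.
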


\begin{proof}
Let $(G,S)$ be an instance of {\sc Steiner Forest} where $G$ is $2K_{1,3}$-subgraph-free.
First, suppose that $G$ is $K_{1,3}$-subgraph-free. Then, $G$ has no vertices of degree~$3$ or more. Hence, $G$ is a disjoint union of cycles and paths, and thus $G$ has treewidth at most~$2$. We apply Theorem~\ref{t-tw}.
Now suppose that $G$ is not $K_{1,3}$-subgraph-free. We may assume that $G$ is $2$-connected by Lemma~\ref{l-2con}. Let $v$ be a vertex of maximum degree. Since $G$ is not $K_{1,3}$-subgraph-free, $v$ has degree at least~$3$. Let $v_1,v_2,v_3$ be any three distinct neighbours of $v$. We call $A = \{v,v_1,v_2,v_3\}$ an \emph{antares}.

We prove a sequence of claims about $2K_{1,3}$-subgraph-free graphs and antares.\\[-15pt]

\begin{claim}\label{p-2k13-degree}
If $v$ has degree more than $6$, then every other vertex of $G$ has degree at most~$6$.
\end{claim}

\begin{claimproof}
For the sake of contradiction, let $u,v \in V(G)$ both be vertices of degree more than $6$. Pick three neighbours $u_1,u_2,u_3$ of $u$ and three neighbours $v_1,v_2,v_3$ of $v$ such that $u_1,u_2,u_3,v_1,v_2,v_3$ are distinct and not equal to $u$ or $v$. Because $|(N(u) \setminus\{v\}) \cup (N(v)\setminus\{u\})| \geq 6$ and $|N(u)|,|N(v)| \geq 7$, this is always possible. However, $u,u_1,u_2,u_3$ and $v,v_1,v_2,v_3$ form a subgraph isomorphic to $2K_{1,3}$ in $G$, a contradiction.
\end{claimproof}
\vspace*{-20pt}

\begin{claim}\label{p-2k13-ccs}
The graph $G-A$ has at most~$15$ connected components.
\end{claim}

\begin{claimproof}
Since $G$ is $2$-connected, any component of $G-A$ has an edge to at least one of $v_1,v_2,v_3$. By Claim~\ref{p-2k13-degree} and the fact that $v$ has maximum degree in $G$, we find that $v_1,v_2,v_3$ each have degree at most~$6$. Recall that each of $v_1,v_2,v_3$ has an edge to $v$. Hence, $G-A$ has at most~$15$ connected components.
\end{claimproof}
\vspace*{-10pt}

\begin{claim} \label{p-2k13-paths}
If $v$ has degree more than~$6$, then $G-A$ is a union of paths. Moreover, only the ends of such a path can be adjacent to $v_1$, $v_2$, or $v_3$.
\end{claim}

\begin{claimproof}
As $G-A$ is $K_{1,3}$-subgraph-free, $G-A$ is a disjoint union of cycles and paths. Suppose that $G-A$ has a connected component $D$ that is a cycle. Since $G$ is $2$-connected, $D$ is adjacent to at least one of $v_1,v_2,v_3$, say $v_1$. Let $x$ be a vertex of $D$ adjacent to $v_1$. By assumption and the definition of an antares, $v$ has degree at least $7$. Hence, it has three neighbours that are not $v_1$, $x$, or the two neighbours of $x$ on $D$. Hence, $G$ has a $2K_{1,3}$ as a subgraph, a contradiction. The same argument holds if $D$ is a path and $x$ is an internal vertex of this path adjacent to $v_1$, $v_2$, or $v_3$.
\end{claimproof}

\noindent
We continue as follows. Firstm suppose $v$ (and thus any other vertex of $G$) has degree at most~$6$.
Then $G-A$ is $K_{1,3}$-subgraph-free, and all vertices of $G-A$ have degree at most~$2$. Hence, all vertices of degree more than~$2$ in~$G$ are in $A$ or have an edge to a vertex of $A$. Hence, using the same argument as in Claim~\ref{p-2k13-ccs}, there are at most $4 + (3 \cdot 5 + 3) = 22$ vertices of degree more than~$2$ in~$G$. Each of them has degree at most~$6$, so there is a constant number of maximal $2$-paths between them. We apply Lemma~\ref{l-degree}.

Now assume that $v$ has degree more than~$6$. Let $A=\{v,v_1,v_2,v_3\}$ be an antares in $G$.  We now apply a single but complex branching step. After this step, we will argue that all $2$-connected components of the remaining graphs have treewidth at most~$2$ or have maximum degree at most~$33$. Note that in a $2$-connected, $2K_{1,3}$-subgraph-free graph of maximum degree~$33$, it follows via an argument as in Claim~\ref{p-2k13-ccs} and a calculation as above that it has at most~$49$ vertices of degree more than~$2$. Each of them, except $v$, has degree at most~$6$ by Claim~\ref{p-2k13-degree}, so there is a constant number of maximal $2$-paths between them. Then, by combining Theorem~\ref{t-tw} and Lemma~\ref{l-degree} with Lemma~\ref{l-2con}, we can solve each branch in polynomial time.

For the branching, we first observe that $G-A$ is a union of paths, $P_1,\ldots,P_a$, by Claim~\ref{p-2k13-paths}. By Claim~\ref{p-2k13-ccs}, $a \leq 15$. Moreover, by Claim~\ref{p-2k13-paths}, only the ends $u_{P_i},w_{P_i}$ of each such path $P_i$ can be adjacent to $v_1$, $v_2$, or $v_3$. In any optimal solution $F$, the intersection of $F$ and $P_i$ consists of multiple subpaths $P^1_i,\ldots,P^{b_i}_i$ of $P_i$. We assume that these subpaths are numbered as they occur along $P_i$, where $P^1_i$ has its vertices closest to $u_{P_i}$. Note that possibly $b_i = 0$ and that possibly some subpaths consist of a single vertex. Observe that only $P^1_i$ and $P^{b_i}_i$ can be adjacent to $v_1,v_2,v_3$, if $u_{P_i} \in V(P^1_i)$ and $w_{P_i} \in V(P^{b_i}_i)$ respectively. Moreover, there can be at most one edge between $P^j_i$ and $v$ in $F$. Denote these edges by $e^1_i,\ldots,e^{b_i}_i$, where $e^j_i$ is the edge between $P^j_i$ and $v$ in $F$. Note that if some $P^j_i$ is a connected component of the solution by itself, then $e^j_i$ does not exist.

We now branch on what $P^1_i$, $P^{b_i}_i$, $e^1_i$, and $e^{b_i}_i$ are (if they exist). Note that $P^1_i$ and $P^{b_i}_i$ can be described by their ends; since $a \leq 15$, the number of ends is at most~$60$ in total, which we choose among $O(n)$ vertices. Similarly, we need to choose at most~$30$ edges of the form $e^1_i$ and $e^{b_i}_i$ among $O(n)$ edges incident on $v$. Hence, the number of branches\footnote{This can be reduced to $n^{60}$ by a  more complex argument, but this is beyond the focus of our paper.} is bounded by $O(n^{90})$. We explicitly allow the paths $P^1_i$ and $P^{b_i}_i$ to be empty (undefined) or to consist of a single vertex, and that the edges $e^1_i$ and $e^{b_i}_i$ are undefined.

Let $Q^1_i$, $Q^{b_i}_i$, $f^1_i$, and $f^{b_i}_i$ be the guessed paths and edges for all $P_i$ in a branch~$B$. Note that we do not guess the precise value of $b_i$, but use it as a placeholder here; we can have a branch in which $b_i = 0$ and $Q^1_i$, $Q^{b_i}_i$, $f^1_i$, and $f^{b_i}_i$ are undefined. 
We also discard a branch if $f^1_i$ is defined but not incident on a vertex of $Q^1_i$ and similarly if $f^{b_i}_i$ is defined but not incident on a vertex of $Q^{b_i}_i$. 
We apply substantial abuse of notation in the below, also if say $Q^1_i = Q^{b_i}_i$ or $Q^1_i = P_i$. However, this does not harm the overall argument.

For each $P_i$, remove all edges incident both on $v$ and on any vertex between $u_{P_i}$ and the end of $Q^1_i$ that is furthest from $u_{P_i}$, except the edge $f^1_i$. Similarly, remove all edges incident both on $v$ and on any vertex between $w_{P_i}$ and the end of $Q^{b_i}_i$ that is furthest from $w_{P_i}$, except the edge $f^{b_i}_i$. Note that possibly $b_i = 0$, in which case all edges from $v$ to $P_i$ are removed. This is correct, because in any Steiner forest for $(G,S)$ that contains $Q^1_i$ ($Q^{b_i}_i$) and $f^1_i$ ($f^{b_i}_i$) cannot also contain any of the edges that were removed, or either the forest would contain a cycle or $Q^1_i$ ($Q^{b_i}_i$) is not the first (last) path of solution restricted to $P_i$. If $b_i = 0$, then any Steiner forest for $(G,S)$ that does not contain any vertices of $P_i$ cannot contain edges from $v$ to $P_i$. In particular, this holds for the considered (but unknown) optimum solution $F$. Let $G_B$ be the resulting graph for this branch. Note that $G_B$ is a subgraph of $G$.

We claim that each $2$-connected component of $G_B$ has treewidth at most~$2$ or maximum degree~$33$. Let $Q^m_i$ be the maximal subpath of $P_i$ that is between and disjoint from $Q^1_i$ and $Q^{b_i}_i$. Note that $Q^m_i$ does not exist if $b_i = 0$ or $V(Q^1_i) \cup V(Q^{b_i}_i) = V(P_i)$, but this does not harm the overall argument. Note that $Q^m_i$ is only adjacent to $v$ in $G_B$ (and not to $v_1$, $v_2$, or $v_3$). Hence, they form a $2$-connected component of $G_B$ of treewidth at most~$2$. Any other $2$-connected component of $G_B$ contains a selection of $v,v_1,v_2,v_3$ and the vertices of the $Q^1_i$'s and $Q^{b_i}_i$'s. However, each $Q^1_i$ and each $Q^{b_i}_i$ has at most one edge to $v$, and there are at most~$30$ of them. Hence, $v$ has degree at most~$33$ in each such $2$-connected component. The claim follows.

Note that if a $2$-connected component has treewidth at most~$2$, the corresponding instance of {\sc Steiner Forest} can be solved in polynomial time by Theorem~\ref{t-tw}. If a $2$-connected component has maximum degree~$33$, then using the same argument as in Claim~\ref{p-2k13-ccs}, there are at most $4 + (3 \cdot 5 + 30) = 49$ vertices of degree more than~$2$ in $G_B$. Each of them, except $v$, has degree at most~$6$ by Claim~\ref{p-2k13-degree}, so there is a constant number of maximal $2$-paths between them. Then the corresponding instance of {\sc Steiner Forest} can be solved in polynomial time by Lemma~\ref{l-degree}. Note that we can always reduce it to $2$-connected components by using Lemma~\ref{l-2con}.

Now it suffices to observe that there are $O(n^{90})$ branches and as argued above, in each branch $B$, the instance $(G_B,S)$ can be solved in polynomial time. Hence, the entire algorithm runs in polynomial time. Since one of these branches corresponds to $P^1_i$, $P^{b_i}_i$, $e^1_i$, and $e^{b_i}_i$, the algorithm is correct.
\end{proof}

\noindent
By using Lemma~\ref{l-extension} (which generalized Lemma~\ref{l-2d2}) we can extend Lemma~\ref{p-2k13-degree} as follows. 

\begin{lemma}\label{l-2k13+p3}
{\sc Steiner Forest} is polynomial-time solvable for $(2K_{1,3}+P_3)$-subgraph-free graphs.
\end{lemma}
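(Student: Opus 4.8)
The plan is to mimic the structure of the proof of Lemma~\ref{l-2k13} (the $2K_{1,3}$-subgraph-free case), using the extra term $P_3$ in the forbidden subgraph only to control what happens in parts of the graph that are far from a high-degree vertex. Let $(G,S)$ be an instance where $G$ is $(2K_{1,3}+P_3)$-subgraph-free. If $G$ is already $2K_{1,3}$-subgraph-free we are done by Lemma~\ref{l-2k13}, so we may assume $G$ contains a subgraph $H'$ isomorphic to $2K_{1,3}$; let $V(H')$ be its eight vertices. Then every connected component of $G-V(H')$ is $P_3$-subgraph-free, hence has at most two vertices; in other words $V(H')$ is a $2$-deletion set of $G$ of size~$8$.

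First I would reduce to the $2$-connected case via Lemma~\ref{l-2con}, and then try to handle the graph using the fact that there is a bounded-size deletion set. However, a $2$-deletion set of size $8$ is too large to apply Lemma~\ref{l-2d2} or Lemma~\ref{l-extension} directly (those need size~$2$). So the real work is to show that, after a bounded amount of branching, we either land in a case with very few high-degree vertices (so that Lemma~\ref{l-degree} applies) or in a case with small treewidth (so that Theorem~\ref{t-tw} applies). Concretely, I would look at a vertex $v$ of maximum degree. If $\deg(v)$ is bounded by a constant, then since $V(H')$ is a $2$-deletion set of size~$8$, every vertex of degree more than~$2$ is in $V(H')$ or adjacent to $V(H')$, so (as in Claim~\ref{p-2k13-ccs}) there is a constant number of such vertices and each has bounded degree, giving a constant number of maximal $2$-paths between them; apply Lemma~\ref{l-degree}. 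If $\deg(v)$ is large, then one argues as in Claims~\ref{p-2k13-degree}--\ref{p-2k13-paths}: a large-degree $v$ forbids a second $K_{1,3}$ with a $P_3$ attached elsewhere, forcing almost all of $G-N[v]$-like structure to be paths, and the components of $G$ minus the antares (plus a little more to account for the $+P_3$) to be of bounded number and simple shape. Then a branching step identical in spirit to the one in Lemma~\ref{l-2k13} (guessing the endpoints of the first and last solution-subpaths on each of the constantly many long paths, and the at most constantly many edges to $v$) reduces to graphs whose $2$-connected components each have either bounded treewidth or bounded maximum degree, which we solve by Theorem~\ref{t-tw} or Lemma~\ref{l-degree} respectively.

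The cleanest way to package the case analysis is probably: set $D := V(H')$, so $G-D$ has components of size at most~$2$. If $G$ also happens to be $2K_{1,3}$-subgraph-free apply Lemma~\ref{l-2k13}; otherwise we already fixed such an $H'$. Now either every vertex has bounded degree (constant depending only on the structure of $2K_{1,3}+P_3$), in which case the number of degree-$\geq 3$ vertices is bounded because they all lie in $N[D]$ and $|D|=8$ with each vertex of $D$ of bounded degree — wait, vertices of $D$ need not have bounded degree, so here I would instead argue that there is at most one vertex of unbounded degree (the analogue of Claim~\ref{p-2k13-degree}, now allowing a bit more slack for the extra $P_3$), handle that single vertex $v$ by the same branching as in Lemma~\ref{l-2k13} (the $+P_3$ only enlarges the constants in the counting arguments), and in the leftover graph all degrees are bounded, so Lemma~\ref{l-degree} finishes. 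In every branch we pass to $2$-connected components via Lemma~\ref{l-2con} as needed.

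The main obstacle I expect is making the "exactly one vertex can have large degree'' claim and the bound on the number of components of $G$ minus the antares go through with the weaker forbidden subgraph $2K_{1,3}+P_3$ instead of $2K_{1,3}$: a second large-degree vertex $u$ now only yields a forbidden subgraph if we can additionally find a disjoint $P_3$, and similarly a cycle-component or a component attached at an internal vertex only gives a contradiction once a disjoint $P_3$ is also exhibited. So the counting in the analogues of Claims~\ref{p-2k13-degree}, \ref{p-2k13-ccs}, and~\ref{p-2k13-paths} has to be redone carefully, reserving one extra $K_{1,3}$-free "spare'' path of three vertices somewhere in the graph, and checking that when no such spare $P_3$ exists the graph is already small (bounded size, hence bounded treewidth). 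This bookkeeping — verifying that in every sub-case we can either exhibit the full $2K_{1,3}+P_3$ or conclude boundedness — is the delicate part; the rest is a routine adaptation of Lemma~\ref{l-2k13} together with Lemmas~\ref{l-2con}, \ref{l-degree}, \ref{l-extension} and Theorems~\ref{t-tw}, \ref{t-vc}.
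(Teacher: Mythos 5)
Your opening matches the paper: find a subgraph $H \cong 2K_{1,3}$, note that every component of $G-V(H)$ is $P_3$-subgraph-free and hence has at most two vertices, so $V(H)$ is a $2$-deletion set of size~$8$. From there, however, you diverge onto a much harder route (adapting the antares branching of Lemma~\ref{l-2k13}) and the key idea of the paper's proof is missing. The paper observes that each of the six \emph{leaves} of $H$ can be adjacent to at most two components of $G-V(H)$: if a leaf $\ell$ saw three such components, then $\ell$ together with one vertex from each of them forms a $K_{1,3}$, the other $K_{1,3}$ of $H$ is untouched, and the remaining three vertices of $\ell$'s original star (its centre and the two other leaves) give a disjoint $P_3$, exhibiting $2K_{1,3}+P_3$. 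Consequently the six leaves together with the at most $12$ components adjacent to them form a bounded-size subgraph $H'$, and every edge incident on $V(H')$ other than those to the two roots stays inside $V(H')$; thus there are only boundedly many edges $X$ incident on $V(H')$. After deleting $X$, every vertex of $H'$ is isolated, the two roots form a $2$-deletion set of $G-X$, and each endpoint of every edge of $X$ is either isolated in $G-X$ or a root — exactly the hypotheses of Lemma~\ref{l-extension}, which finishes the proof in a few lines.

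Your alternative is not carried out, and at least one of its intermediate steps is concretely false: you propose to argue "there is at most one vertex of unbounded degree (the analogue of Claim~\ref{p-2k13-degree})", but this fails for $(2K_{1,3}+P_3)$-subgraph-free graphs. Already $K_{2,n}$ is $2$-connected, $(2K_{1,3}+P_3)$-subgraph-free, and has two vertices of degree $n$, namely the two roots of the $2K_{1,3}$ you would have extracted. So the single-antares branching of Lemma~\ref{l-2k13} cannot simply be re-run, and the ``delicate bookkeeping'' you defer is not a routine adaptation — it is precisely where the argument breaks. The correct move is to notice that while the two roots may have arbitrarily large degree, all other vertices (leaves of $H$ and the size-$\le 2$ components) have bounded degree, and then to exploit the two roots as the $2$-deletion set in Lemma~\ref{l-extension} rather than trying to bound the number of high-degree vertices to one.
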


\begin{proof}
Let $G$ be a $(2K_{1,3}+P_3)$-subgraph-free graph. If $G$ is $2K_{1,3}$-subgraph-free, we use Lemma~\ref{l-2k13}. Hence, we may assume $G$ contains a $2K_{1,3}$. Let $H$ be a subgraph of $G$ isomorphic to $2K_{1,3}$. 

Now every connected component of $G - H$ has size at most $2$, as it is $P_3$-subgraph-free. If more than $12$ such components are adjacent to the leafs of the $2K_{1,3}$'s, there must be a leaf of the $2K_{1,3}$ adjacent to at least $3$ components. But then this leaf with those neighbours form a $K_{1,3}$, and the rest of its original $K_{1,3}$ in $H$ is a $P_3$, and so the graph contains a $2K_{1,3}+P_3$. We conclude that at most~$12$ connected components of $G - H$ are adjacent to the leafs of the $2K_{1,3}$. As each of these components has size at most $2$ and there are a constant number of them, the leafs of the $2K_{1,3}$ together with all components adjacent to them is in total a subgraph $H'$ of constant size.

Observe that $H'$ is only adjacent to the roots of the $2K_{1,3}$'s that make up $H$. Hence, a constant number of edges is incident on vertices of $H'$. The other components of $G-H$, each of size at most~$2$, are also only adjacent to these roots. Hence, the roots form a $2$-deletion set of $G-E(H')$. Then, it follows that {\sc Steiner Forest} can be solved in polynomial time by Lemma~\ref{l-extension}.
\end{proof}

\noindent
We now prove the case $H=S_{1,1,4}$.

\begin{lemma}\label{l-s114}
{\sc Steiner Forest} is polynomial-time solvable for $S_{1,1,4}$-subgraph-free graphs.
\end{lemma}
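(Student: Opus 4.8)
The plan is to exploit the structure that $S_{1,1,4}$-subgraph-freeness imposes on vertices of high degree together with long paths emanating from them, reducing to one of the polynomial cases already established (Theorem~\ref{t-tw}, Lemma~\ref{l-degree}, Lemma~\ref{l-2d2}/Lemma~\ref{l-extension}, or Theorem~\ref{t-vc}). First I would invoke Lemma~\ref{l-2con} to assume the input graph $G$ is $2$-connected, and hence has minimum degree at least~$2$. Recall that $S_{1,1,4}$ is the tree obtained from the claw by leaving two edges unsubdivided and subdividing the third edge three times; equivalently it is a vertex $r$ with two pendant neighbours and one pendant path of length~$4$. So a graph is $S_{1,1,4}$-subgraph-free if and only if there is no vertex $r$ of degree at least~$3$ that has two ``private'' neighbours off some $P_5$ (path on five vertices) starting at $r$. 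If $G$ has maximum degree at most~$2$, then $G$ is a disjoint union of paths and cycles, so $\tw(G)\le 2$ and Theorem~\ref{t-tw} applies.

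Next I would analyse a vertex $v$ of maximum degree $d\ge 3$. The key structural claim to prove is that, because of $S_{1,1,4}$-subgraph-freeness, $G$ cannot simultaneously have a high-degree vertex and long $2$-paths ``far away''. Concretely: if $v$ has degree $\ge 3$, then consider $G-v$. Any path of length $\ge 4$ in $G-v$ that has an endpoint adjacent to $v$ would, together with two further neighbours of $v$, create an $S_{1,1,4}$ (provided $v$ has at least two neighbours disjoint from that path, which high degree guarantees). So the components of $G-v$ reachable from $v$ are ``short'' in a controlled way, or $v$ has small degree. I expect the cleanest route is a short case analysis on $d=\deg(v)$: for $d$ bounded by a constant, show (as in Lemma~\ref{l-2k13}) that all but a constant number of vertices have degree~$2$, so there are only constantly many maximal $2$-paths between vertices of degree $>2$, and apply Lemma~\ref{l-degree}; for $d$ large, show the structure of $G$ is so constrained — essentially $v$ plus a bounded-size ``core'' plus pendant paths of bounded length, or $v$ plus components of size at most~$2$ — that either $\tw(G-v)\le 2$ so a small deletion set exists (Lemma~\ref{l-extension} or Theorem~\ref{t-vc}), or again Lemma~\ref{l-degree} applies after a constant-size branching on which edges incident to $v$ are in the solution (mirroring the branching in Lemma~\ref{l-2k13}).

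The main obstacle I anticipate is the same one that made Lemma~\ref{l-2k13} lengthy: controlling the edges between the high-degree vertex $v$ and the long $2$-paths. Even though $S_{1,1,4}$-freeness forbids long pendant paths at $v$ itself, a long cycle through $v$ (or a long $2$-path both of whose ends touch $v$, $v_1$, $v_2$, or $v_3$) is not directly forbidden, and in a $2$-connected graph such structures must be handled. The remedy is to branch, for each such long $2$-path $P$, on the two extreme edges of $P$ used by the optimal solution and on the (at most one) edge from $v$ into each retained end-piece, exactly as in Lemma~\ref{l-2k13}; after this constant-size branching the edges from $v$ to the ``middle'' of each path can be safely deleted (a solution using $Q^1_i$, $f^1_i$ and any such middle edge would contain a cycle), leaving each resulting $2$-connected component either of treewidth $\le 2$ or of bounded maximum degree with only constantly many vertices of degree $>2$. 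Then Lemma~\ref{l-degree} and Theorem~\ref{t-tw}, combined with Lemma~\ref{l-2con}, finish each branch in polynomial time, and since there are only polynomially many branches, the whole algorithm runs in polynomial time.

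A secondary subtlety is bookkeeping the constants: verifying that $G-\{v,v_1,v_2,v_3\}$ (or $G$ minus a suitable bounded-size set) has only a bounded number of components and that each is a path, via the analogue of Claims~\ref{p-2k13-degree}--\ref{p-2k13-paths} for $S_{1,1,4}$ instead of $2K_{1,3}$; I would state and prove the two or three needed claims explicitly (a high-degree vertex forces all its neighbours to have small degree, $G$ minus the core is a union of short paths, internal vertices of these paths are nonadjacent to the core), and then conclude exactly as above. None of these steps is deep; the work is in the careful branching and the constant-counting, which I would present in the same style as Lemma~\ref{l-2k13}.
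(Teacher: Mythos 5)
Your proposal takes a genuinely different route from the paper, and the route you sketch has a structural gap at its core. You want to replicate the anatomy of Lemma~\ref{l-2k13}: pick a maximum-degree vertex $v$, carve out an antares-like bounded set $A$, argue that $G-A$ has few components and is essentially a union of paths, and then branch. But the analogues of Claims~\ref{p-2k13-degree} and~\ref{p-2k13-ccs} simply fail for $S_{1,1,4}$-subgraph-free graphs. For example, $K_{2,n}$ (and also the complete split graph $K_{1,1,n}$) is $2$-connected, $S_{1,1,4}$-subgraph-free, has \emph{two} vertices of unbounded degree, and removing any constant-size set leaves $\Theta(n)$ components and $\Theta(n)$ maximal $2$-paths between high-degree vertices. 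So the ``high degree is unique'' lemma, the ``constantly many components after removing $A$'' lemma, and the ``constantly many maximal $2$-paths'' conclusion (which drives the call to Lemma~\ref{l-degree}) all break. What survives in such examples is that the graph has a tiny vertex cover, but your argument is not set up to find it.

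The paper's proof is organized around exactly that observation, and it sidesteps degree analysis entirely. It proceeds by induction on $r\in\{1,2,3,4\}$ for $S_{1,1,r}$-subgraph-free graphs: the base case $r=1$ is $K_{1,3}$-subgraph-free and handled by Lemma~\ref{l-2k13}. For $r\geq 2$, if $G$ is $S_{1,1,r-1}$-subgraph-free the previous case applies; otherwise $G$ contains an $S_{1,1,r-1}$ subgraph $W$ on a constant number of vertices, and the whole work is a short sequence of claims showing that (after using $2$-connectivity, Lemma~\ref{l-2con}) the components of $G-W$ are tightly restricted: all but a constant number are singletons, and the non-singleton ones have constant size and constant count. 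Consequently $W$ together with those few component vertices is a vertex cover of constant size, and Theorem~\ref{t-vc} finishes. No branching on edges into long $2$-paths, no degree dichotomy, no calls to Lemma~\ref{l-degree} or Lemma~\ref{l-extension}. Your high-level inventory of available tools is right, and your intuition that $S_{1,1,4}$-freeness plus $2$-connectivity is very restrictive is also right, but the anchor should be the $S_{1,1,3}$ subgraph (guaranteed by the inductive cases), not a max-degree vertex; as your own target lemma list suggests, you should be aiming to exhibit a small vertex cover and hand off to the \FPT{} algorithm, not to bound the number of $2$-paths.
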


\begin{proof}
	We prove that {\sc Steiner Forest} is polynomial-time solvable for $S_{1,1,r}$-subgraph-free graphs for increasing $r$, $1\leq r \leq 4$.
	In this proof, let $(G,S)$ be an instance of {\sc Steiner Forest} where $G$ is a $S_{1,1,r}$-subgraph-free graph. By Lemma~\ref{l-2con} we may assume that $G$ is $2$-connected.
	
	\medskip
	\noindent
	{\bf Case 1.} $r=1$.\\
	As $S_{1,1,1} = K_{1,3}$, we find that $G$ is $K_{1,3}$-subgraph-free graph, so we can use Lemma~\ref{l-2k13}.
	
	\medskip
	\noindent
	{\bf Case 2.} $r=2$.\\
	Assume $G$ contains $S_{1,1,1}$ as a subgraph, else apply Case 1. Let $s$ be the degree-$3$ vertex of the $S_{1,1,1}$ and let $a,b,c$ be its degree-1 neighbours (these vertices may have higher degree in $G$). Any other vertex in $G$ cannot be adjacent to any of $a,b,c$, as $G$ is $S_{1,1,2}$-subgraph-free. If $G$ has more than $4$ vertices, then $s$ is a cut-vertex, which contradicts the $2$-connectivity of $G$. Hence, the instance is trivial.
	
	\medskip
	\noindent
	{\bf Case 3.} $r=3$.\\
	Assume $G$ contains $S_{1,1,2}$ as a subgraph, else apply Case 2. Let $s$ be the degree-$3$ vertex of the $S_{1,1,2}$ and let $a_1,b,c$ be its neighbours and $a_2$ the neighbour of $a_1$ (these vertices may have higher degree in $G$). Consider the connected components of $G - \{s,a_1,b,c,a_2\}$. Let $D$ be such a component containing an edge. Note that $D$ cannot be adjacent to $a_2$ in $G$, as $G$ is $S_{1,1,3}$-subgraph-free. Moreover, $D$ cannot be adjacent to $a_1,b,c$ in $G$, as $G$ is $S_{1,1,3}$-subgraph-free and $D$ contains an edge. But then $s$ is a cut-vertex in $G$, which contradicts the $2$-connectivity of $G$. Hence, no such component $D$ with an edge exists, and all connected components of $G - \{s,a_1,b,c,a_2\}$ must be single vertices. But then $G$ has a vertex cover of size $5$, and we can solve {\sc Steiner Forest} in polynomial time by Theorem~\ref{t-vc}.
	
	\medskip
	\noindent
	{\bf Case 4.} $r=4$.\\
	Assume $G$ contains $S_{1,1,3}$ as a subgraph, else apply Case 3.
	Let $s$ be the degree-$3$ vertex of the $S_{1,1,3}$, and let $a_1,b,c$ be its neighbours and $a_2$ the neighbour of $a_1$, and $a_3$ the neighbour of $a_2$ (these vertices may have higher degree in $G$). Consider the connected components of $G' = G - \{s,a_1,b,c,a_2,a_3\}$. 
	We prove the following claims.
	
	\begin{claim} \label{claim_s114_1}
		Any connected component of $G'$ of size at least~$2$ is not adjacent to $a_2,a_3$.
	\end{claim}
	\begin{claimproof}
		A connected component of size at least~$2$ contains an edge. Hence, if it is adjacent to either $a_2$ or $a_3$, there is a path of length $4$ from $s$ to the component, forming an $S_{1,1,4}$, a contradiction.
	\end{claimproof}
	
	\begin{claim} \label{claim_s114_2}
		Both $b$ and $c$ are adjacent to at most one connected component of $G'$.
	\end{claim}
	\begin{claimproof}
		If say $b$ is adjacent to more than one connected component of $G'$, then $b s a_1 a_2 a_3$ is a length-$4$ path and forms a $S_{1,1,4}$ together with the adjacencies of the connected components, a contradiction.
	\end{claimproof}
	
	\begin{claim} \label{claim_s114_3}
		There are no two connected components $D_1,D_2$ of $G'$ such that both are adjacent to $a_1$ and $D_1$ is adjacent to $b$ or $c$.
	\end{claim}
	\begin{claimproof}
		Suppose there are two such components $D_1,D_2$. Let $d_1\in V(D_1)$ and $d_2\in V(D_2)$ be neighbors of $a_1$. If $d_1b \in E(G)$, then $a_1 d_1 b s c$ is a length-$4$ path and $a_1$ has two other neighbours $d_2, a_2$, which together form a $S_{1,1,4}$, contradicting that $G$ is $S_{1,1,4}$-subgraph-free. The argument straightforwardly extends to the case that $b$ is adjacent to any other vertex of $D_1$.
	\end{claimproof}
	
	\begin{claim} \label{claim_s114_4}
		There are no two connected components $D_1,D_2$ of $G'$, with $|V(D_1)|\geq 2$, such that $a_1$ is adjacent to both $D_1$ and $D_2$.
	\end{claim}
	\begin{claimproof}
		If there are two such components $D_1,D_2$, then by Claims~\ref{claim_s114_1},\ref{claim_s114_3}, $D_1$ can only have edges to $s$ other than having edges to $a_1$. By $2$-connectivity of $G$, $D_1$ has an edge to $s$. If $D_1$ contains two vertices $d_1^1, d_1^2$ such that $(d_1^1, a_1), (d_1^2, s) \in E$, then $G$ contains an $S_{1,1,4}$: $a_1 d_1^1\ldots d_1^2 s b$ is at least a length-$4$ path, and $a_1$ has two other neighbours $a_2, d_2$. If $D_1$ contains no two such vertices, it contains a cut-vertex: there must be two adjacencies from $D_1$ to $a_1,s$, but they cannot originate from different vertices of $D_1$. Hence, there cannot be two such connected components $D_1,D_2$ adjacent to $a_1$.
	\end{claimproof}
	
	\begin{claim} \label{claim_s114_5}
		There are at most three connected components of $G'$ of size at least~$2$: at most one adjacent to $a_1$, at most one adjacent to $b$, and at most one adjacent to $c$.
	\end{claim}
	\begin{claimproof}
		By Claim~\ref{claim_s114_4}, $a_1$ is adjacent to at most one component of size at least~$2$. Hence, by Claim~\ref{claim_s114_1}, all other components of size at least~$2$ are not adjacent to $a_1,a_2,a_3$. By $2$-connectivity, any other component of $G'$ is adjacent to at least two vertices of $s,b,c$. By the pigeonhole principle, if there are at least three other components of size at least~$2$, at least two are adjacent to either $b$ or $c$, contradicting Claim~\ref{claim_s114_2}.
	\end{claimproof}
	
	\begin{claim} \label{claim_s114_6}
		Any connected component of $G'$ has size at most~$3$.
	\end{claim}
	\begin{claimproof}
		Assume $D$ is a component of $G'$ of size at least~$4$. By Claim~\ref{claim_s114_1}, $D$ is not adjacent to both $a_2$ and $a_3$. By $2$-connectivity, $D$ is adjacent to at least one of $a_1,b,c$. Let $d$ be a vertex of $D$ adjacent to one of $a_1,b,c$. Then $d$ cannot be the endpoint of a $P_3$ in $D$, as otherwise this $P_3$ with the adjacency among $a_1,b,c$ makes a length-4 path to $s$, and hence a $S_{1,1,4}$. We get that $D$ is a star with center $d$ and no other vertex in $V(D) \setminus \{d\}$ is adjacent to $a_1, b,$ or $c$ (any such vertex is an endpoint of a $P_3$ in $D$). By 2-connectivity, another vertex $d' \neq d$ in $D$ is adjacent to $s$ or one of $a_1,b,c$, different from a neighbour of $d$. But then there is a path from $d$ to $a_2$ through $d'$ of length at least four. As $D$ is a component of at least four vertices, $d$ has two neighbours other than $d'$ in $D$, and hence, a $S_{1,1,4}$ is formed. Hence, in $G'$, no connected component exists of size at least~$4$.
	\end{claimproof}
	
	\noindent
	By Claims~\ref{claim_s114_5} and \ref{claim_s114_6} we know that in $G'$ there are at most three components of size at least~$2$, and they have size at most~$3$. These components are adjacent only to $s,a_1,b,c$. All other components are adjacent only to $s,a_1,b,c,a_2$ and have size at most~$1$. It follows that $s,a_1,b,c,a_2$ together with the vertices of all components of size at least~$2$ form a vertex cover of constant size of the graph. 
	We apply Theorem~\ref{t-vc}.
\end{proof}

\noindent
We use Lemma~\ref{l-s114} to prove the case $H=P_9$. We also need Lemma~\ref{l-2d2} again.

\begin{lemma}\label{l-p9}
{\sc Steiner Forest} is polynomial-time solvable for $P_9$-subgraph-free graphs.
\end{lemma}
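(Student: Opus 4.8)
The plan is to reduce, by a sequence of structural observations, to one of the polynomial-time cases already established. As always we may assume by Lemma~\ref{l-2con} that $G$ is $2$-connected. If $G$ is $S_{1,1,4}$-subgraph-free we are done by Lemma~\ref{l-s114}, so assume $G$ contains a copy of $S_{1,1,4}$; call its degree-$3$ vertex $s$, its two short legs $a,b$ (neighbours of $s$) and its long leg $s,c_1,c_2,c_3,c_4$. The key point is that $P_9$-subgraph-freeness severely limits how long paths can hang off this gadget: no vertex of $G$ can lie ``too far'' from the core $\{s,a,b,c_1,c_2,c_3,c_4\}$, since a sufficiently long path attached to an endpoint of one of the legs, combined with the rest of the $S_{1,1,4}$, would contain a $P_9$. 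Concretely, I would first argue that the connected components of $G - \{s,a,b,c_1,c_2,c_3,c_4\}$ all have bounded diameter (in fact are short paths, after using that they are $P_k$-subgraph-free for small $k$ when attached to the far end of the long leg), and that only a bounded number of them can attach to $a$, $b$, or the interior vertices $c_1,c_2,c_3$, because each such attachment again creates a long path through $s$.

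After these observations, the components attaching to $a,b,c_1,c_2,c_3$ are few and small, so together with the core they form a set $Y$ of constant size, and the only remaining components attach solely to $c_4$ (and possibly to nothing else of the core). I would then analyse these ``pendant'' components at $c_4$: since $G$ is $2$-connected, each is $2$-connected to the rest only through $c_4$ (or through a bounded part of the core), and being attached only at $c_4$ with $G$ being $P_9$-subgraph-free forces each such component, after contracting, to be governed by a bounded structure — the natural target is that $\{c_4\}$, or $\{c_4\}$ together with one more vertex, is a $2$-deletion set of the subgraph induced by these pendant pieces after deleting a bounded edge set. This is exactly the shape of hypothesis required by Lemma~\ref{l-extension} (or directly Lemma~\ref{l-2d2}): a constant-size edge set $X$ whose endpoints are either isolated in $G-X$ or lie in a size-$\le 2$ set $C$, with $G-X$ having a $2$-deletion set $C$. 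I would therefore conclude by peeling off the constant-size part $Y$ via the branching/edge-contraction machinery of Lemma~\ref{l-path} (as packaged in Lemma~\ref{l-extension}), leaving an instance with a $2$-deletion set of size at most~$2$ to which Lemma~\ref{l-2d2} applies; alternatively, if the bounded-degree/bounded-$2$-path structure is cleaner, invoke Lemma~\ref{l-degree} or even Theorem~\ref{t-vc} on a constant-size vertex cover.

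The main obstacle I anticipate is the case analysis for components attached to the long leg: a component attached at $c_4$ can itself be large and internally complicated, and one must show that $P_9$-subgraph-freeness, together with $2$-connectivity (which forces a second connection back into the core), pins it down to something a $2$-deletion set of bounded size can capture — ruling out, for instance, a component that is a long path attached to $c_4$ at one end (forbidden, as it would complete a $P_9$ with $s,a$ or with the short leg) but still handling components that are short paths or small dense blobs attached at two points. Getting the bookkeeping right — which core vertices a component may see, how many such components there can be, and which ones force us into the $c_4$-pendant regime versus the constant-size vertex-cover regime — is the delicate part; once the dichotomy ``either $G$ has a constant-size vertex cover, or it reduces (after removing $O(1)$ edges) to a graph with a $2$-deletion set of size at most~$2$'' is established, the algorithmic conclusion is immediate from Theorem~\ref{t-vc} and Lemmas~\ref{l-extension} and~\ref{l-2d2}.
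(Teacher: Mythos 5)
Your high-level plan shares the same final toolkit as the paper (reduce to the $S_{1,1,4}$-free case via Lemma~\ref{l-s114}, then finish with Theorem~\ref{t-vc} or Lemma~\ref{l-2d2}/\ref{l-extension}), but the anchor object you build the structural analysis around is different: you fix an arbitrary copy of $S_{1,1,4}$, whereas the paper fixes a \emph{longest path} $P=u_1\cdots u_r$ (with $r\le 8$). That choice matters, and your sketch as written does not go through.

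The concrete gaps are the following. First, you omit $s$ itself as an attachment point: components of $G-\{s,a,b,c_1,\ldots,c_4\}$ may attach to $s$ in arbitrary number and there is no $P_9$ obstruction to this, so the claim that ``the only remaining components attach solely to $c_4$'' is false. Second, the claim that ``only a bounded number of them can attach to $a$, $b$, or the interior vertices $c_1,c_2,c_3$'' is not established and appears false even for components of size at least~$2$; $P_9$-freeness bounds the \emph{depth} of a component seen from the core, not the \emph{number} of components or their \emph{width}, and an unbounded number of pendant edges at, say, $c_2$ creates no $P_9$. Without such a bound you neither get a constant-size $Y$ nor the $O(1)$ edge set $X$ needed to invoke Lemma~\ref{l-extension}, and you cannot verify the delicate endpoint condition that Lemma~\ref{l-extension} requires on $X$. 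Third, even restricting attention to $c_4$, a component attached there can be a large star (or blob), so ``bounded diameter'' does not give bounded size and does not by itself produce a $2$-deletion set of size at most~$2$.

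The paper's proof avoids these difficulties by exploiting the maximality of the longest path. If $r\le 5$ the graph is $S_{1,1,4}$-subgraph-free; if $r\in\{6,7\}$, a short case analysis using $2$-connectivity shows every component of $G-V(P)$ is a singleton, so $V(P)$ is a vertex cover of size at most $7$ and Theorem~\ref{t-vc} applies; and if $r=8$, a more careful argument (again leaning on maximality and $2$-connectivity) shows that components of size at least~$2$ in $G-V(P)$ attach only at $u_3$ and $u_6$ and have size exactly~$2$, after which path-swapping arguments show $\{u_3,u_6\}$ is a $2$-deletion set, so Lemma~\ref{l-2d2} finishes. You should either switch to a longest path and reprove those structural claims, or substantially strengthen your component analysis around the $S_{1,1,4}$ core to handle attachments at $s$ and to convert ``bounded depth'' into a genuine $2$-deletion-set or vertex-cover bound.
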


\begin{proof}
	Let $G=(V,E)$ be a $P_9$-subgraph-free graph that is part of an instance of {\sc Steiner Forest}. By Lemma~\ref{l-2con} we may assume that $G$ is $2$-connected.
	Let $P=u_1\cdots u_r$, for some $r\geq 2$, be a longest (not necessarily induced) path in $G$.  As $G$ is $P_9$-subgraph-free, we find that $r\leq 8$.
	If $r\leq 5$, then $G$ is $P_6$-subgraph-free, and thus $S_{1,1,4}$-subgraph-free, and we can apply Lemma~\ref{l-s114}. Hence, $r\in \{6,7,8\}$.
	
	\medskip
	\noindent
	{\bf Case 1.} $r=6$.\\
	Then $G$ is $P_7$-subgraph-free. Suppose $G-V(P)$ has a connected component~$D$ with more than one vertex. As $G$ is $P_7$-subgraph-free and $|V(D)|\geq 2$, no vertex of $D$ is adjacent to $u_1$, $u_2$, $u_5$ or $u_6$. As $G$ is connected, at least one of $u_3$ or $u_4$, say $u_3$, has a neighbour $v$ in $D$. Suppose $w\in V(D)$ is adjacent to $u_4$ (where $w=v$ is possible). Let $vQw$ be a path from $v$ to $w$ in $D$; note that $Q$ might be empty. Now the path $u_1u_2u_3vQwu_4u_5u_6$ has at least seven vertices, contradicting that $G$ is $P_7$-subgraph-free. Hence, no vertex of $D$ is adjacent to $u_4$. This means that $u_3$ is a cut-vertex of $G$, contradicting the $2$-connectivity. We conclude that every connected component of $G-V(P)$ consists of one vertex. In other words, $\{u_1,\ldots,u_6\}$ is a vertex cover of $G$, and we can apply Theorem~\ref{t-vc}.
	
	\medskip
	\noindent
	{\bf Case 2.} $r=7$.\\
	Then $G$ is $P_8$-subgraph-free. Suppose $G-V(P)$ has a connected component~$D$ with more than one vertex. As $G$ is $P_8$-subgraph-free and $|V(D)|\geq 2$, no vertex of $D$ is adjacent to $u_1$, $u_2$, $u_6$ or $u_7$. As $G$ is connected, at least one of $u_3$, $u_4$ or $u_5$ has a neighbour $v$ in $D$. 
	
	First assume that $u_3$ or $u_5$, say $u_3$, has a neighbour $v$ in $D$. As $|V(D)|\geq 2$, we find that $v$ has a neighbour~$w$ in $D$. If $w$ has a neighbour~$x\neq v$ in $D$, then $xwvu_3u_4u_5u_6u_7$ is a path on eight vertices, contradicting that $G$ is $P_8$-subgraph-free. Hence, $v$ is the only neighbour of $w$ in $D$. 
	As $G$ is $2$-connected, this means that $w$ has a neighbour on $P$. Recall that 
	no vertex of $D$ is not adjacent to $u_1$, $u_2$, $u_6$ or $u_7$. If $w$ is adjacent to $u_4$, then $u_1u_2u_3vwu_4u_5u_6$ is a path on eight vertices. If $w$ is adjacent to $u_5$, then $u_1u_2u_3vwu_5u_6u_7$ is a path on eight vertices. Hence, $w$ must be adjacent to $u_3$ (and $u_3$ is the only neighbour of $w$ on $P$). We now find that $w$ is the only neighbour of $v$ in $D$, as otherwise, if $v$ has a neighbour $w'\neq w$ on $D$, then $w'vwu_3u_4u_5u_6u_7$ is a path on eight vertices. In other words, $V(D)=\{v,w\}$. By the same arguments, but now applied on $v$, we find that $u_3$ is the only neighbour of $v$ on $P$. Hence, $u_3$ is a cut-vertex of $G$, contradicting the $2$-connectivity of $G$.
	
	From the above we conclude that no vertex of $D$ is adjacent to $u_3$. By the same reason, no vertex of $D$ is adjacent to $u_5$. We find that $u_4$ disconnects $D$ from the rest of $G$, contradicting the $2$-connectivity of $G$. We conclude that every connected component of $G-V(P)$ consists of one vertex. In other words, $\{u_1,\ldots,u_7\}$ is a vertex cover of $G$, and we can apply Theorem~\ref{t-vc}.
	
	\medskip
	\noindent
	{\bf Case 3.} $r=8$.\\
	Recall that $G$ is $P_9$-subgraph-free, so this is the last case to consider. If every connected component of $G-V(P)$ consists of one vertex, then $\{u_1,\ldots,u_8\}$ is a vertex cover of $G$, and we can apply Theorem~\ref{t-vc}. 
	
	Now suppose that $G-V(P)$ has a connected component~$D$ with more than one vertex. As $G$ is $P_9$-subgraph-free and $|V(D)|\geq 2$, no vertex of $D$ is not adjacent to $u_1$, $u_2$, $u_7$ or $u_8$. As $G$ is connected, at least one of $u_3$, $u_4$, $u_5$ or $u_6$ has a neighbour in $D$. 
	
	First, suppose that neither $u_3$ nor $u_6$ has a neighbour in $D$. Then $u_4$ or $u_5$, say $u_4$, has a neighbour $v$ in $D$. As $G$ is $2$-connected, there exists a path $vQu_5$ from $v$ to $u_5$ that does not contain $u_4$. As no vertex from $\{u_1,u_2,u_3,u_6,u_7,u_8\}$ has a neighbour in $D$, the vertices of $Q$ belong to $D$. Now,
	$u_1u_2u_3u_4vQu_5u_6u_7u_8$ is a path on nine vertices, contradicting that $G$ is $P_9$-subgraph-free.
	
	Hence, at least one of $u_3$ or $u_6$, say $u_3$, has a neighbour $v$ in $D$. As $|V(D)|\geq 2$, we find that $v$ has a neighbour~$w$ in $D$. 
	If $w$ has a neighbour~$x\neq v$ in $D$, then $xwvu_3u_4u_5u_6u_7u_8$ is a path on nine vertices, contradicting that $G$ is $P_9$-subgraph-free. Hence, $v$ is the only neighbour of $w$ in $D$. 
	As $G$ is $2$-connected, this means that $w$ has a neighbour on $P$. Recall that 
	no vertex of $D$ is adjacent to $u_1$, $u_2$, $u_7$ or $u_8$. If $w$ is adjacent to $u_4$, then $u_1u_2u_3vwu_4u_5u_6u_7$ is a path on nine vertices. If $w$ is adjacent to $u_5$, then $u_1u_2u_3vwu_5u_6u_7u_8$ is a path on nine vertices. Hence, $w$ must be adjacent to either or both $u_3$ and $u_6$ (and $w$ has no other neighbours on $P$).
	We now find that $w$ is the only neighbour of $v$ in $D$ for the following reason. Suppose $v$ has a neighbour $w'\neq w$ on $D$. If $w$ is adjacent to $u_3$, then $w'vwu_3u_4u_5u_6u_7u_8$ is a path on nine vertices. If $w$ is adjacent to $u_6$, then $w'vwu_6u_5u_4u_3u_2u_1$ is a path on nine vertices.
	In other words, $V(D)=\{v,w\}$. By the same arguments, but now applied on $v$, we find that apart from $u_3$, it holds that $v$ may have only $u_6$ as a neighbour of $P$.
	
	Note that we have proven for any path $Q=q_1\dots q_8$ on eight vertices that every connected component of size at least~$2$ in $G-Q$ has size exactly~$2$, and moreover, $q_3$ and $q_6$ are the only vertices of $Q$ with neighbours in such a connected component. We may assume without loss of generality that $v$ is adjacent to $u_3$ and $w$ is adjacent to $u_6$, as otherwise one of $u_3,u_6,v,w$ is a cut-vertex of $G$, contradicting the $2$-connectivity of $G$.
	By replacing $P$ with $P'=u_1u_2u_3vwu_6u_7u_8$, we find that $u_4$ and $u_5$ have no neighbours outside $\{u_3,u_6\}$. 
	By replacing $P$ with $P'=u_5u_4u_3vwu_6u_7u_8$, we find that $u_2$ has no neighbours outside $\{u_3,u_6\}$ (just like $u_1$). By symmetry, $u_7$ has no neighbours outside $\{u_3,u_6\}$ (just like $u_8$). Hence, every connected component of $G-\{u_3,u_6\}$ has at most two vertices. In other words, $\{u_3,u_6\}$ is a $2$-deletion set of size at most~$2$, and we can apply Lemma~\ref{l-2d2}. 
\end{proof}

\noindent
We  use Lemma~\ref{l-p9} to show the case $H=2P_4+P_3$.

\begin{lemma}\label{l-2p4+p3}
{\sc Steiner Forest} is polynomial-time solvable for $(2P_4+P_3)$-subgraph-free graphs.
\end{lemma}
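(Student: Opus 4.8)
The plan is to follow the template of Lemmas~\ref{l-p9} and~\ref{l-2k13+p3}. First I would invoke Lemma~\ref{l-2con} to assume that $G$ is $2$-connected. If $G$ is $P_9$-subgraph-free then Lemma~\ref{l-p9} already solves the instance in polynomial time, so I may assume that $G$ contains a $P_9$ and fix a longest (not necessarily induced) path $P = u_1 \cdots u_r$ in $G$. Since $P_{11}$ contains $2P_4+P_3$ as a subgraph (take the three sub-paths $u_1u_2u_3u_4$, $u_5u_6u_7u_8$ and $u_9u_{10}u_{11}$) and $G$ is $(2P_4+P_3)$-subgraph-free, $G$ is $P_{11}$-subgraph-free, so $r \in \{9,10\}$.

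Next I would show that every connected component of $G-V(P)$ has at most two vertices. Indeed, $P$ contains a $2P_4$ on at most eight of its vertices (for instance $u_1u_2u_3u_4$ together with $u_6u_7u_8u_9$), whereas any connected component of $G-V(P)$ on at least three vertices contains a $P_3$; these are vertex-disjoint and use only edges of $G$, so $G$ would contain $2P_4+P_3$, a contradiction. Hence $V(P)$ is a vertex cover of $G$, except for the single edge $xy$ contributed by each size-$2$ component $\{x,y\}$ of $G-V(P)$.

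The heart of the proof is then to bound the number $t$ of size-$2$ components of $G-V(P)$ by an absolute constant; once this is done, $V(P)$ together with the vertices of all size-$2$ components is a vertex cover of $G$ of size at most $r+2t=O(1)$, and Theorem~\ref{t-vc} solves the instance in polynomial time. To bound $t$, take a size-$2$ component $D=\{x,y\}$ with $xy\in E(G)$. Since $G$ is $2$-connected, both $x$ and $y$ have a neighbour on $P$, so there is a ``handle'' path through $D$ joining two distinct path vertices $u_a$ and $u_b$. Splicing this handle into $P$ in place of the sub-path $u_a\cdots u_b$ yields another path of $G$, whose length is at most $r$ by the maximality of $P$; in particular $u_a$ and $u_b$ are never consecutive on $P$ (otherwise we would create a $P_{r+2}$, hence a $P_{11}$), and re-routing $P$ through $D$ as in the $r=8$ case of Lemma~\ref{l-p9} gives further restrictions on the attachment positions. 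Conversely, a handle through $D$ can be extended by a vertex of $P$ adjacent to $u_a$ (or to $u_b$), or by the edge to the other attachment vertex, into a $P_4$ that uses only two vertices of $P$, lying in a bounded zone around positions $a$ and $b$. Consequently, if $t$ were large, then two size-$2$ components $D$ and $D'$ would yield two vertex-disjoint $P_4$'s together occupying only a few vertices of $P$, leaving a sub-path of $P$ on at least three consecutive vertices from which to take a disjoint $P_3$; thus $G$ would contain $2P_4+P_3$, a contradiction. Making this rigorous requires splitting into the cases $r=9$ and $r=10$ and checking the finitely many possible attachment configurations of $D$ and $D'$, but in each case already two size-$2$ components force the forbidden subgraph, so $t\le 1$. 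With the previous step this gives $G$ a vertex cover of size at most $r+2\le 12$, and Theorem~\ref{t-vc} applies.

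The main obstacle is the positional case analysis of the previous paragraph. Unlike the $2K_{1,3}+P_3$ case, where the rigid root/leaf structure of the $2K_{1,3}$ bounds the dangerous attachments immediately, here the path $P$ is long and flexible, so one must be careful about which vertices of $P$ the size-$2$ components can attach to, and in particular verify that re-routing $P$ through a size-$2$ component never produces a path on at least eleven vertices. The remaining ingredients --- reducing to the $2$-connected case, bounding the component sizes, and invoking Theorem~\ref{t-vc} --- are routine.
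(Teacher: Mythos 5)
Your high-level plan matches the paper's: reduce to the $2$-connected case, observe that once $G$ contains a $P_9$ every component of $G$ minus that $P_9$ has size at most $2$, bound the vertex cover, and apply Theorem~\ref{t-vc}. But there is a real gap in the central step, the bound on the number of size-$2$ components.

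The paper never uses a longest path. It fixes an arbitrary $P_9$, say $u_1\cdots u_9$, and shows by a short case analysis that a \emph{single} size-$2$ component $D$ of $G-V(P_9)$ can only be adjacent to $u_3$ or $u_7$; and then that $D$ cannot be adjacent to both, nor to just one without a cut-vertex. So the paper shows there are \emph{no} size-$2$ components at all, making $V(P_9)$ a vertex cover of size $9$. By contrast, your sketch aims to rule out \emph{two} size-$2$ components by a counting argument: two $P_4$'s, each using only two vertices of $P$, should leave three consecutive vertices of $P$ for a $P_3$. That step is not valid in general. For example, with $r=9$, if one $P_4$ uses $u_3,u_4$ and the other uses $u_6,u_7$, the remaining vertices are $u_1,u_2,u_5,u_8,u_9$, which contain no $P_3$ as a consecutive sub-path. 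You acknowledge that ``making this rigorous requires ... checking the finitely many possible attachment configurations,'' but the sketch as given is not merely incomplete; the counting heuristic it relies on is wrong, so the case analysis cannot simply fill in the details of the argument you stated. What actually needs to be proved is the stronger fact (which the paper proves, and which also holds in your $r=10$ case) that \emph{one} size-$2$ component already produces $2P_4+P_3$ or a cut vertex: the attachment positions are severely restricted (to $u_3,u_7$ when $r=9$, and to nothing at all when $r=10$), and the argument hinges on that restriction, not on a count of leftover vertices.

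Two smaller remarks. First, introducing the longest-path machinery (and the resulting case $r=10$) is an unnecessary complication: the paper's argument works with any $P_9$ subgraph and never invokes maximality or re-routing. Second, your claim that $t\le 1$ is actually true (in fact $t=0$), so the target statement is correct; it is the justification that is flawed. If you replace the counting sketch by the paper's direct analysis of attachment positions, applied to your chosen longest path (with the additional $r=10$ case handled analogously), the argument goes through.
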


\begin{proof}
Let $G$ be a $(2P_4+P_3)$-subgraph-free graph. By Lemma~\ref{l-2con} we may assume that $G$ is $2$-connected. If $G$ is $P_9$-subgraph-free, we can solve {\sc Steiner Forest} in polynomial time by Lemma~\ref{l-p9}. Hence, we may assume $G$ contains a $P_9$, say $P = (u_1,\ldots,u_9)$, as a subgraph. As a $P_9$ contains a $2P_4$, every connected component in $G - P$ is $P_3$-subgraph-free and hence of size at most $2$. 

Let $D = (d_1,d_2)$ be a component of size exactly $2$ in $G - P$. Note that $D$ cannot be adjacent to either $u_1$ or $u_2$, as then $D$ and $u_1,u_2$ form a $P_4$, and $u_3 u_4 u_5 u_6$ form a $P_4$, and $u_7 u_8 u_9$ form a $P_3$. Analogously, $D$ cannot be adjacent to either $u_8$ or $u_9$. If $D$ is adjacent to $u_4$ or $u_6$, $D$ together with the adjacency and $u_5$ forms a $P_4$, and the rest of $P$ contains a $P_4 + P_3$. Similarly, if $D$ is adjacent to $u_5$, it forms a $P_4$ with $u_5, u_6$, and $u_7 u_8 u_9$ is a $P_3$ and $u_1 u_2 u_3 u_4$ is a $P_4$. Hence, $D$ can only be adjacent to $u_3$ or $u_7$.
	
If one of $d_1, d_2$ has $u_3$ as a neighbour, say $d_1$, and the other has $u_7$ as a neighbour, then $u_1 u_2 u_3 d_1 + u_4 u_5 u_6 + d_2 u_7 u_8 u_9$ form a $2P_4 + P_3$. So, either $d_1$ and $d_2$ are adjacent to only one of $u_3$ and $u_7$, or only one of $d_1, d_2$ has neighbours on $P$. But then there exists a cut vertex, which contradicts $G$ being 2-connected. We conclude that such a component $D$ of size 2 cannot exist in $G$.

It follows that every component in $G - P$ is of size 1, and $P$ forms a vertex cover of size 9. By Theorem~\ref{t-vc} we can solve {\sc Steiner Forest} in polynomial time.
\end{proof}

\noindent
{\it Proof of polynomial part of Theorem~\ref{t-main}.} 
Combine Lemma~\ref{l-sp2} with each of the others lemmas in this section.

\section{Conclusions}\label{s-con}

The aim of this paper was to increase our understanding of the complexity of {\sc Steiner Forest} and more generally, C23-problems, that is, graph problems that are not only \NP-complete on subcubic graphs (C2) and under edge division of subcubic graphs (C3), but also \NP-complete for graphs of bounded treewidth (not C1). Therefore, we studied {\sc Steiner Forest} for $H$-subgraph-free graphs. We significantly narrowed the number of open cases, thereby proving a number of boundary cases (as can be seen in Theorem~\ref{t-main}).
However, we were not able to generalize Lemma~\ref{l-sp2} from $P_2$ to $P_3$:

\begin{open}
Let $H$ be a graph. Is {\sc Steiner Forest} polynomial-time solvable on $(H+P_3)$-subgraph-free graphs if it is polynomial-time solvable for $H$-subgraph-free graphs?
 \end{open}
 
\noindent
 An affirmative answer to this question would reduce the number of open cases in Theorem~\ref{t-main} to a finite number. However, this requires a polynomial-time algorithm for {\sc Steiner Forest} on graphs with a $2$-deletion set of size~$d$ for any constant~$d$. The question if such an algorithm exists turned out to be highly challenging.

One important attempt that we made to solve the above question is to reduce instances to highly structured instances. In particular, we are able to reduce it to the case where the vertices of the deletion set itself belong to different connected components of a minimum Steiner forest, and all vertices not in the deletion set are terminals. However, even solving such highly structured instances seems difficult. 
We managed to reduce it to a Constraint Satisfaction Problem (CSP). Interestingly, this CSP can be solved in polynomial time for $2$-deletion sets of size~$2$ (cf.~Lemma~\ref{l-2d2}). The same CSP is \NP-complete when we consider deletion sets of size~$3$. Unfortunately, our reduction is only one way, so this does not directly imply \NP-completeness of {\sc Steiner Forest} in this case. Still, this hints that the problem might actually be \NP-complete for $H=sP_3$ for some $s\geq 4$.

It would also be interesting to know whether we can reduce the running time of our \FPT\ algorithm by the vertex cover number $\vc$ of a graph to $2^{O(\vc)} n^{O(1)}$.

We now discuss the complexity of {\sc Steiner Forest} on $H$-subgraph-free graphs versus that of {\sc Subgraph Isomorphism}. Bodlaender et al.~\cite{BHKKOO20} showed that if $H$ is connected and $H \not= P_5$, {\sc Subgraph Isomorphism} is polynomial-time solvable on $H$-subgraph-free graphs if $H$ is a subgraph of $P_4$ and \NP-complete otherwise. Observe that, following Theorem~\ref{t-main}, {\sc Steiner Forest} is solvable in polynomial time on $H$-subgraph-free graphs for a strictly larger set of connected graphs $H$. On the other hand, Bodlaender et al.\ also showed that {\sc Subgraph Isomorphism} is (randomized) polynomial-time solvable on $3P_4$-subgraph-free graphs, while {\sc Steiner Forest} is \NP-complete in this case (cf.~Theorem~\ref{t-main}). Hence, the two problems are incomparable.

Finally, the C123 problem {\sc Steiner Tree} is also classified with respect to the induced subgraph relation: it is polynomial-time solvable for $H$-free graphs if $H\ssi sP_1+P_4$ for some $s\geq 0$ and \NP-complete otherwise~\cite{BBJPPL21}. The hardness part of this result immediately carries over to {\sc Steiner Forest}. However, we do not know the complexity of {\sc Steiner Forest} on $(sP_1+P_4)$-free graphs. 

\medskip
\noindent
{\it Acknowledgments.} We thank Daniel Lokshtanov for pointing out a possible relationship between {\sc Steiner Forest} and CSP, as discussed in Section~\ref{s-con}.

\end{document}